\documentclass[11pt,a4paper]{amsart}
\usepackage[margin=26mm,headheight=14pt]{geometry}
\usepackage[T1]{fontenc}
\usepackage{lmodern,microtype}
\usepackage{amsmath,amssymb,amsthm,mathtools,mathrsfs}
\usepackage{graphicx,booktabs,tabularx,array}
\usepackage{tikz}
\usetikzlibrary{arrows.meta,calc,patterns}
\usepackage[font=small,labelfont=bf]{caption}
\usepackage{fancyhdr}
\usepackage[colorlinks=true,linkcolor=blue!45!black,urlcolor=blue!45!black,citecolor=blue!45!black,bookmarksnumbered=true]{hyperref}
\hypersetup{pdftitle={Scalar curvature rigidity of small spherical caps with arbitrary fill-ins},pdfsubject={Publication manuscript}}
\newtheorem{theorem}{Theorem}[section]
\newtheorem{lemma}[theorem]{Lemma}
\newtheorem{proposition}[theorem]{Proposition}
\newtheorem{corollary}[theorem]{Corollary}
\theoremstyle{definition}

\theoremstyle{remark}
\newtheorem{remark}[theorem]{Remark}
\newcommand{\R}{\mathbb R}
\newcommand{\Sph}{\mathbb S}
\let\Cap\relax
\DeclareMathOperator{\Cap}{Cap}
\DeclareMathOperator{\tr}{tr}
\DeclareMathOperator{\Hess}{Hess}
\newcommand{\dd}{\,d}
\numberwithin{equation}{section}
\allowdisplaybreaks[2]
\title{Scalar curvature rigidity of small spherical caps\\with arbitrary fill-ins}

\author{Yiyue Zhang}
\author{Bowen Zhao}
\address{Beijing Institute of Mathematical Sciences and Applications,
Beijing 101408, China
}
\email{zhangyiyue@bimsa.cn}

\email{bowenzhao@bimsa.cn}

\begin{document}
\begin{abstract}
We prove an analogue of Min-Oo's conjecture for sufficiently small
spherical caps in the unit round sphere. For every dimension $n\ge3$,
there exists $r_*(n)>0$ such that the following holds for every round
cap of radius $0<r\le r_*(n)$: any smooth compact connected oriented
Riemannian fill-in with the same intrinsic boundary metric and with
scalar curvature and outward boundary mean curvature at least those
of the cap is isometric to the cap.
\end{abstract}
\maketitle

\setcounter{tocdepth}{1}

\section{Introduction}\label{sec:statement}

Let $(M^n,g)$ be a compact Riemannian manifold with entire boundary
$\Sigma:=\partial M$, and write $g_\Sigma$ for its induced boundary
metric. The spherical cap fill-in problem asks whether the intrinsic
metric of a round cap's boundary, together with the model lower bounds
for scalar curvature and outward mean curvature, determines the fill-in.
The boundary identification does not supply interior coordinates or
prescribe the topology of $M$.

For closed manifolds, Llarull's theorem \cite{Llarull} is a fundamental
scalar curvature rigidity result. A metric on $\Sph^n$ that dominates
the unit round metric and has scalar curvature at least $n(n-1)$ is
round. The same scalar curvature bound implies rigidity for a closed
spin manifold admitting a smooth area-nonincreasing map of nonzero degree
to the unit sphere. Here we impose comparison data only on the boundary,
without a spin structure or global metric comparison.

Min-Oo's conjecture \cite{MinOo} proposed that a metric on the hemisphere
with scalar curvature at least $n(n-1)$, unit round boundary metric,
and totally geodesic boundary must be round. Brendle--Marques--Neves
\cite{BMN} disproved it for every $n\ge3$, even with the metric fixed
on a boundary collar. Smaller caps behave differently. Brendle--Marques
\cite[Theorem~3]{BM} proved rigidity for sufficiently small round balls
under a $C^2$ closeness assumption.  Gromov records
the Brendle--Marques small cap conjecture in the setting of prescribed
boundary data \cite[\S3.12.2, item~II, pp.~162--163]{Gromov}: is there
a positive dimensional radius below which such rigidity holds for an
arbitrary fill-in? Corollary~\ref{cor:gromov} gives this conclusion.

Several earlier results establish rigidity with additional geometric
control. Hang--Wang \cite{HW} treated the conformal class of the
hemisphere. Spiegel \cite{Spiegel} treated locally conformally flat
fill-ins with umbilic round boundary. Hu--Liu--Shi \cite{HLS} proved
three-dimensional spherical cap rigidity under pointwise metric
domination, and Lee--Tam \cite{LT} obtained rigidity for spherical
domains under global metric domination in every dimension. None of
these extra assumptions is imposed below.

The problem also belongs to the theory of scalar curvature fill-ins.
Shi--Wang--Wei--Zhu \cite{SWWZ} established existence and obstruction
results, including obstructions from large boundary mean curvature.
Shi--Wang--Wei \cite{SWW} showed that an arbitrary boundary metric
extends to a positive scalar curvature metric when mean curvature is
unrestricted, underscoring the role of the latter constraint.
Cecchini--Hirsch--Zeidler \cite{CHZ} proved sharp mean curvature rigidity
for spin fill-ins, including a Euclidean ball equality case.
Frenck--Hanke--Hirsch \cite{FHH} obtained total mean curvature bounds by
surgery under specified dimension, spin, and mean curvature hypotheses.
B\"ar~\cite{Baer26} established upper bounds for the total mean curvature
of spin fill-ins under lower bounds for scalar curvature and boundary
mean curvature.
Our result concerns positive curvature spherical models and allows
arbitrary oriented fill-ins in every dimension $n\ge3$.

\subsection{Main result}

Write $\sigma$ for the unit round metric on $\Sph^{n-1}$. Mean curvature
is defined as the trace of the second fundamental form with respect to the outward unit normal; the Euclidean unit sphere therefore has mean curvature $n-1$ in our convention. A prescribed boundary isometry is understood in the notation
$g_\Sigma=\sigma$.

\begin{theorem}\label{thm:main}
For every integer $n\ge3$, there is a constant
$C_*(n)\in(0,n(n-1))$ with the following property. Let
$0<C\le C_*(n)$, and let $(M^n,g)$ be a smooth, compact, connected,
oriented Riemannian manifold with entire boundary $\Sigma=\partial M$.
If
\[
 g_\Sigma=\sigma,\qquad R_g\ge C,\qquad
 H_g\ge H_C:=(n-1)\sqrt{1-\frac{C}{n(n-1)}},
\]
then $(M,g)$ is isometric to the small round cap of scalar curvature
$C$ with boundary of intrinsic radius one.
\end{theorem}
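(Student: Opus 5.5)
We first reduce to a quasi-spherical (Shi--Tam type) monotonicity argument. The model is the cap of radius $r$ in the sphere of radius $\rho=\sqrt{n(n-1)/C}$, with $\sin(r/\rho)\rho=1$; its boundary is the unit round sphere with mean curvature $H_C=(n-1)\cos(r/\rho)$. The plan is to glue to $(M,g)$ along $\Sigma$ an exterior region $E=[1,\infty)\times\Sph^{n-1}$ carrying a metric
\[
 \bar g = u^2\,ds^2 + s^2\sigma ,
\]
where $u$ is obtained by solving Bartnik's quasi-spherical parabolic equation. The equation prescribes scalar curvature $R_{\bar g}\equiv C$ on $E$, i.e.\ it is the exterior of the model sphere in the positive-curvature case. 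The initial data is $u|_{s=1}=H_C/H_g$, so that the mean curvatures match across $\Sigma$. Since $C>0$, the exterior does not extend to $s\to\infty$. We therefore run the flow only up to the equator radius $s=\rho$ of the model, where the comparison quantity degenerates. The smallness of $C$, equivalently $\rho\gg1$, is exactly what should keep the flow well defined on $[1,\rho)$.

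The key monotone quantity is the de Sitter analogue of the Shi--Tam functional,
\[
 m(s)=\int_{\Sph^{n-1}_s} V\,(H_0-H_{\bar g})\,d\mu ,
\]
where $H_0=(n-1)\sqrt{1/s^2-1/\rho^2}$ is the model mean curvature and $V=\sqrt{1-s^2/\rho^2}$ is the static potential. A computation analogous to Shi--Tam, and to Wang--Yau/Shi--Tam in the hyperbolic setting, should give $m'(s)\le0$. It also gives $m(s)\to0$, or at least $\liminf m(s)\le0$, as $s\to\rho$, provided $u$ stays bounded. Since $H_g\ge H_C$ gives $m(1)\ge0$, we obtain $m\equiv0$ and equality $H_g=H_C$. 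Equality in the monotonicity forces $u\equiv1$, and in turn forces static data on $M$. The glued manifold then has $R\ge C$ with a corner satisfying the Miao condition, and an equality argument shows $M$ is Einstein with totally umbilic boundary. Obata-type rigidity with the prescribed boundary then identifies $(M,g)$ with the cap.

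\textbf{Main obstacle.} The hard step is the behaviour of $u$ near $s=\rho$, where $H_0\to0$ and the parabolic equation degenerates. Positivity of $R_g$ alone gives no control of $\sup u$ without extra work. My first attempt is a barrier argument: compare $u$ with ODE solutions depending only on $s$, using that $u(1)=H_C/H_g\le1$. Then show that for $\rho$ large (i.e.\ $C\le C_*(n)$) the ODE solution with initial value $\le1$ stays bounded up to $s=\rho$. This is where the dimensional threshold $C_*(n)$ enters, and BMN shows some smallness is necessary.

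If the direct flow fails, the alternative is a perturbative route: invoke Brendle--Marques small-cap rigidity after a gluing and contradiction/compactness argument. This seems harder for arbitrary fill-ins, so I would pursue the barrier estimate first.
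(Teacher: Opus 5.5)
There is a genuine gap, and it lies in the monotonicity step rather than in bounding $u$.

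First, the signs. Since $H_0(1)=(n-1)\sqrt{1-\rho^{-2}}=H_C$ and $H_{\bar g}(1)=H_g$, the hypothesis $H_g\ge H_C$ gives $m(1)\le0$, not $m(1)\ge0$. In any case, the chain you wrote ($m(1)\ge0$, $m'\le0$, $\liminf m\le0$) would not force $m\equiv0$. Also, for $\bar g=u^2ds^2+s^2\sigma$ the leaves have mean curvature $(n-1)/(su)$, so matching requires $u(1)=(n-1)/H_g$; the datum $H_C/H_g$ belongs to the ansatz $u^2V^{-2}ds^2+s^2\sigma$.

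More seriously, the de Sitter analogue of Shi--Tam monotonicity is false. The exact solutions of your flow with constant datum $H_g>H_C$ are
\[
 \bar g=w^{-1}ds^2+s^2\sigma,\qquad
 w=V^2+2|\mu|s^{2-n},\qquad \mu<0.
\]
They have $R_{\bar g}\equiv C$ and leaf mean curvature $(n-1)\sqrt w/s$, so $H_g=(n-1)\sqrt{1-\rho^{-2}+2|\mu|}$. For them
\[
 m(s)=(n-1)|\Sph^{n-1}|\,s^{n-2}V\bigl(V-\sqrt w\bigr)
 =-\frac{2(n-1)|\Sph^{n-1}|\,|\mu|}{1+\sqrt{1+2|\mu|s^{2-n}V^{-2}}}.
\]
Hence $m(1)<0=\lim_{s\uparrow\rho}m(s)$, even though $u=w^{-1/2}$ stays between two positive constants on $[1,\rho]$. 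So $m$ is not nonincreasing, even for a perfectly regular flow.

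This failure is structural. The exterior flow depends only on the boundary data $(\sigma,H_g)$. Any argument of the form ``$m(1)\le0$, $m$ monotone, $m(\rho^-)=0$'' would therefore prove $H_g\equiv H_C$ for every datum $H_g\ge H_C$, without ever using $R_g\ge C$ in $M$; the example above is exactly such a datum. In Shi--Tam the interior enters through the positive mass theorem on $M\cup E$. With a positive-curvature exterior ending at the equator, the evident far-end positivity input would be a hemisphere statement of Min-Oo type, which Brendle--Marques--Neves show is false. So the smallness of $C$ cannot merely serve to keep $u$ bounded. Your fallback through Brendle--Marques would need a compactness theory for arbitrary fill-ins, which is not available.

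The paper instead attaches an asymptotically flat Schwarzschild exterior. After conformal flattening, the positive mass theorem gives $m_\gamma\ge0$ and the budget
\[
 2(n-1)|\Sph^{n-1}|m_\gamma\le C\bigl(|\Sph^{n-1}|/n-J\bigr),
\]
in which $R_g\ge C$ enters through $J=\int_Mu\,dV_g$. The core of the proof is the quantitative estimate $J\ge|\Sph^{n-1}|/n-K_nm_\gamma$. It comes from Wang--Zhang stability coordinates for the scalar-flat, boundary-preserving metric $h$, together with a first-variation cancellation that uses $R_h=0$ and $h_\Sigma=\sigma$. Smallness of $C$ then absorbs $CK_nm_\gamma$ and forces $m_\gamma=0$. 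Rigidity follows from Brown--York equality, which makes $(M,h)$ the Euclidean ball, and then a static-potential argument on the cap, not an Obata-type theorem.
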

\begin{remark}
One may take
\[
 C_*=2^{-128n^2}.
\]
We omit the detailed estimates of the constants to keep the presentation
concise.
\end{remark}
\begin{corollary}[Small cap rigidity]\label{cor:gromov}
For each $n\ge3$, there is $r_*(n)\in(0,\pi/2)$ such that the following
holds. Let $0<r\le r_*(n)$ and let $(M^n,g)$ be a smooth, compact,
connected, oriented Riemannian manifold with entire boundary $\Sigma$.
If
\[
 (\Sigma,g_\Sigma)\cong(\Sph^{n-1},\sin^2r\,\sigma),\qquad
 R_g\ge n(n-1),\qquad H_g\ge(n-1)\cot r,
\]
then $(M,g)$ is isometric to the geodesic ball of radius $r$ in the
unit round sphere.
\end{corollary}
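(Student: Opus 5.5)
The corollary is a rescaling of Theorem~\ref{thm:main}, so the plan is to translate its hypotheses into the theorem's normalization, apply the theorem, and scale back. Given $(M,g)$ as in the corollary, set $\tilde g:=\sin^{-2}r\,g$. Scalar curvature scales by the inverse of the conformal factor, and mean curvature by the inverse of its square root. Hence
\[
 (\Sigma,\tilde g_\Sigma)\cong(\Sph^{n-1},\sigma),\qquad R_{\tilde g}=\sin^2 r\,R_g\ge n(n-1)\sin^2 r,\qquad H_{\tilde g}=\sin r\,H_g\ge (n-1)\cos r .
\]
Put $C:=n(n-1)\sin^2r$. Then $(n-1)\sqrt{1-C/(n(n-1))}=(n-1)\sqrt{1-\sin^2r}=(n-1)\cos r=H_C$, using $r\in(0,\pi/2)$ so that $\cos r>0$. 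The three hypotheses of Theorem~\ref{thm:main} therefore hold for $\tilde g$ with this $C$, provided $0<C\le C_*(n)$. Orientation, compactness, connectedness and smoothness are unaffected by the constant rescaling.

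Choose the radius threshold as
\[
 r_*(n):=\arcsin\sqrt{C_*(n)/(n(n-1))}.
\]
This is well defined and lies in $(0,\pi/2)$ because $C_*(n)\in(0,n(n-1))$. Since $\sin$ is increasing on $(0,\pi/2)$, every $0<r\le r_*(n)$ gives $0<C\le C_*(n)$. Theorem~\ref{thm:main} then shows that $(M,\tilde g)$ is isometric to the small round cap of scalar curvature $C$ whose boundary has intrinsic radius one.

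It remains to identify this cap with a geodesic ball of radius $r$ in the unit sphere. The cap of constant curvature $\kappa=C/(n(n-1))=\sin^2r$ is the geodesic ball of radius $\rho$ in the sphere of radius $1/\sin r$. Its boundary has intrinsic radius $\sin(\sqrt\kappa\,\rho)/\sqrt\kappa$. Setting this equal to $1$ gives $\sin(\rho\sin r)=\sin r$, so $\rho\sin r=r$, where we take the small cap, with angular radius less than $\pi/2$. Its boundary mean curvature is $(n-1)\sqrt\kappa\cot(\sqrt\kappa\rho)=(n-1)\sin r\cot r=(n-1)\cos r$, which matches $H_C$. Multiplying the metric back by $\sin^2 r$ sends the sphere of radius $1/\sin r$ to the unit sphere and the ball of radius $\rho$ to the ball of radius $\rho\sin r=r$. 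Hence $(M,g)$ is isometric to the geodesic ball of radius $r$ in the unit round sphere.

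No step is a genuine obstacle. The only points needing care are the scaling conventions for $R$ and $H$, which are consistent with the paper's outward mean curvature convention, and the check that the model cap in Theorem~\ref{thm:main} is the small cap, with angular radius less than $\pi/2$, rather than its complement. That check is settled by the matching of $H_C=(n-1)\cos r>0$.
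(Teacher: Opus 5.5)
Your proposal is correct and follows the paper's proof: rescale by $(\sin r)^{-2}$, set $C=n(n-1)\sin^2 r$ with $r_*(n)=\arcsin\sqrt{C_*(n)/[n(n-1)]}$, and apply Theorem~\ref{thm:main}. Your extra check that the model cap scales back to the geodesic ball of radius $r$ is a detail the paper leaves implicit; it agrees with Lemma~\ref{lem:round-model}, since there $\alpha=\tan^2(r/2)$ and so the angular radius $2\arctan\sqrt\alpha$ equals $r$.
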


\begin{proof}
Take $r_*(n)=\arcsin\sqrt{C_*(n)/[n(n-1)]}$ and apply
Theorem~\ref{thm:main} to $\widehat g=(\sin r)^{-2}g$.
Its boundary is unit round,
$R_{\widehat g}\ge n(n-1)\sin^2r$, and
$H_{\widehat g}\ge(n-1)\cos r$.
\end{proof}

The prescribed intrinsic boundary metric is part of the fill-ins
in Gromov's question. Without it, the two curvature inequalities alone
would also hold for smaller round balls. No interior topology, spin
structure, volume bound, or initial closeness to a round metric is
assumed in either statement.

\subsection{Proof strategy}

Attach the fixed Schwarzschild exterior to $(M,g)$ along $\Sigma$,
obtaining $(N,g_N)$ with nonnegative distributional scalar curvature.
Solving the conformal Laplacian equation with $u\to1$ at infinity
gives $\gamma:=u^{4/(n-2)}g_N$, which is scalar flat on both sides
of $\Sigma$ and has matching mean curvatures there. Its second
fundamental forms may still differ; smoothing and the positive mass
inequality give $m_\gamma\ge0$. Integrating the conformal equation,
with $J:=\int_Mu\,dV_g$, yields
\begin{equation}\label{eq:mainbudget}
 \begin{gathered}
 0\le2(n-1)|\Sph^{n-1}|m_\gamma
       \le C\left(\frac{|\Sph^{n-1}|}{n}-J\right),\\
 0\le m_\gamma\le\frac{C}{2n(n-1)}.
 \end{gathered}
\end{equation}
The key estimate of this paper is
\begin{equation}\label{eq:maincomparison}
 J\ge\frac{|\Sph^{n-1}|}{n}-K_nm_\gamma,
\end{equation}
where $K_n\ge1$ is the uniform
comparison constant fixed in Proposition~\ref{prop:mass-vanishing}.
Together, these inequalities force $m_\gamma=0$ when $C$ is
sufficiently small.

To preserve the prescribed boundary metric, solve the conformal
equation on $M$ with $u_D|_\Sigma=1$ and set
$h:=u_D^{4/(n-2)}g$. Then $R_h=0$, $h_\Sigma=\sigma$, and
$u_D/u$ is $\gamma$-harmonic. Section~\ref{sec:energy} extends a
constant multiple of this ratio $u_D/u$ to the exterior, producing a Euclidean extension of $h$. A Brown--York identity bounds the Dirichlet energy of this extended conformal factor by $m_\gamma$, e.g. \eqref{eq:energybound}.

The main comparison \eqref{eq:maincomparison} rests on two main ideas. First, in section \ref{sec:geometry} we use the quantitative minimal graph estimates underlying the stability argument of Wang--Zhang \cite{WZ} to control the geometry of $h$.
Because the Euclidean extension of $h$ need not have nonnegative distributional scalar curvature at $\Sigma$, the graphs are constructed over smooth approximations of $\gamma$ and the estimates are transferred to $h$ through the aforementioned conformal relation. Their derivative
estimates, together with the conformal energy bound and a separate
boundary identification argument, give coordinates in which $h$
is close to the Euclidean metric on a nearly round domain, apart
from a set of small capacity. The metric and boundary errors are
controlled by $\sqrt{m_\gamma}$ in the relevant energy norms,
up to arbitrarily small approximation errors.

Second, Section~\ref{sec:weighted} improves this control to an error
of order $m_\gamma$, yielding a sharp lower bound for
\[
 J=\int_Mu\,dV_g.
\]
We construct a small solution to a Dirichlet comparison problem and
expand the comparison to first order in the metric perturbation and
boundary displacement.
An identity involving the adjoint of the linearized scalar curvature
operator, together with $R_h=0$, converts the linear metric contribution
into a boundary term. The condition $h_\Sigma=\sigma$ relates this term
to the boundary displacement, causing the two linear contributions to
cancel. The remaining errors are quadratic and hence of order
$m_\gamma$.
A boundary mean identity and a Poisson test control the remaining
harmonic weight, completing \eqref{eq:maincomparison}. This improvement
from $\sqrt{m_\gamma}$ to $m_\gamma$ allows the mass inequality to force
$m_\gamma=0$.

When $m_\gamma=0$, the conformal energy vanishes, and Brown--York
equality identifies $(M,h)$ with the Euclidean ball. A positive
static potential then recovers the round cap metric $g$.
Thus ball topology follows from the equality argument.

The Riemannian positive mass theorem was first proved by Schoen--Yau
\cite{SY} using minimal surfaces. Brendle--Wang \cite{BW} developed a
new dimension descent scheme extending the Schoen--Yau approach to
arbitrary dimensions and addressing the singularities that arise in
the induction. We use the smooth positive mass theorem of Khuri--Wang--Wang
\cite{KWW} and the stability theorem of Wang--Zhang \cite{WZ}.
The required results are recalled in Section~\ref{sec:foundations}.

\subsection{Notation and organization}

We use the conventions
\[
 \Delta_g:=\operatorname{div}_g\nabla_g,\qquad
 L_g:=-\frac{4(n-1)}{n-2}\Delta_g+R_g.
\]
The notation $O_j(r^{-b})$ includes derivatives through order $j$, with one additional power of decay for each derivative. Fix $0<\theta_0<1$. The notation $O_{j,\theta_0}(r^{-b})$ also includes the corresponding scaled H\"older seminorms.

On the asymptotically flat manifolds considered below, we write
$H_0^1(N)$ for the completion of $C_c^\infty(N)$ in the Dirichlet
norm $\|d\phi\|_2$. The global Sobolev inequality gives a continuous
embedding into $L^{2n/(n-2)}(N)$; global $L^2$ integrability is not
required. This distinction is relevant in dimensions three and four,
where a function with decay of order $r^{2-n}$ may have finite
Dirichlet energy without being square integrable.
On bounded domains, $H_0^1$ has its usual zero-trace meaning.

Unless otherwise stated, $K$ denotes a positive constant depending
only on $n$ and fixed dimensional parameters, and may change from
line to line. Constants used in existence and approximation
arguments may depend on the fixed fill-in. The corresponding
approximation errors are sent to zero with the fill-in held fixed.

The principal metrics are summarized below; $g'$ denotes a generic
smooth metric used in the graph construction.
\begin{center}
\small
\begin{tabularx}{\textwidth}{@{}l >{\raggedright\arraybackslash}X@{}}
\toprule
Metric & Definition and role\\
\midrule
$g$ & Given fill-in metric on $M$.\\
\addlinespace
$g_N$ & $g$ joined to the fixed Schwarzschild exterior.\\
\addlinespace
$\gamma$ & $u^{4/(n-2)}g_N$; scalar flat on each side of $\Sigma$,
with matching mean curvatures.\\
\addlinespace
$h$ & $u_D^{4/(n-2)}g$ on $M$, extended by
$r_{\mathrm S}^{-2}v^{4/(n-2)}\gamma$; $h_\Sigma=\sigma$
and the exterior is Euclidean.\\
\addlinespace
$g'$ & Smooth metric for the graph estimates; later
$g'=\gamma_\varepsilon$, an approximation of $\gamma$.\\
\addlinespace
$\widetilde g_i$ & Conformally modified metric of the $i$th minimal graph.\\
\bottomrule
\end{tabularx}
\end{center}

Section~\ref{sec:foundations} presents the analytic inputs,
smoothing across $\Sigma$, and Brown--York positivity and rigidity.
Section~\ref{sec:energy} establishes the mass budget and the energy
identity relating the two conformal changes.
Section~\ref{sec:geometry} constructs the comparison coordinates
and identifies the prescribed boundary and the portion of the
fill-in represented by these coordinates.
Section~\ref{sec:weighted} proves the integral comparison through
the first variation cancellation and the harmonic weight estimate.
Finally, Section~\ref{sec:equality} completes the rigidity argument.

\medskip
\noindent\textbf{AI usage.} The authors made substantial use of AI in developing and editing this manuscript. The authors carefully checked all
arguments and polish the paper.

\medskip
\noindent\textbf{Acknowledgements.}  YZ is partially supported by NSFC
Grant No.\ 12501070. Both authors are partially supported by the startup fund from BIMSA. BZ gratefully acknowledges the Morningside Center, Chinese Academy of Sciences, for hospitality.

\section{Analytic foundations}
\label{sec:foundations}

We separate the external smooth manifold results from the constructions
needed at the gluing hypersurface. Our mass normalization is
\[
 m_{\mathrm{ADM}}(g')=\frac{1}{2(n-1)|\Sph^{n-1}|}
 \lim_{r\to\infty}\int_{S_r}
 (\partial_jg'_{ij}-\partial_ig'_{jj})\frac{x^i}{r}\,dA_\delta.
\]
In particular, the metric
$(1+b_{\mathrm{end}}r^{2-n}+O(r^{1-n}))^{4/(n-2)}\delta$ has mass $2b_{\mathrm{end}}$.

\subsection{The external positive mass and graph inputs}
We use the smooth version of the positive mass theorem of
Khuri--Wang--Wang \cite[Theorem~1.1]{KWW}. Specifically, let
$(N^n,g')$, $n\ge3$, be smooth, complete, connected, and without
boundary, with one asymptotically flat end and compact complement.
If
\[
 R_{g'}\ge0,\qquad R_{g'}\in L^1(N,g'),\qquad
 \partial^j(g'-\delta)=O(r^{-q-j}),\quad j=0,1,2,
 \qquad q>\frac{n-2}{2},
\]
then $m_{\mathrm{ADM}}(g')\ge0$.
The singular set is empty, and the decay satisfies
\cite[Definition~2.2]{KWW}. Our original ends have $q=n-2$.
 We use only the mass inequality and apply it to smooth
metrics after any required smoothing and conformal correction.

We use the quantitative minimal graph estimates of Wang--Zhang
\cite{WZ} in every dimension $n\ge3$. Their hypotheses are
smoothness, completeness, connectedness, orientation, absence of
boundary, finitely many asymptotically flat ends, nonnegative
integrable scalar curvature, and harmonic asymptotics on a
distinguished end. Our approximating metrics have one end,
compact complement, and the stronger exact form
\[
 g'=\phi^{4/(n-2)}\delta,\qquad
 \Delta_\delta\phi=0,\qquad
 \phi>0,\qquad \phi\longrightarrow1
\]
outside a compact set. The exterior harmonic expansion gives the
decay required in \cite[equations~(2.2) and~(2.4)]{WZ}.
No restriction on the topology of the compact core is imposed.

Fix $\lambda:=\lambda_0(n)/2>0$, where $\lambda_0(n)$ is the
dimensional slope bound in \cite[Theorem~3.1]{WZ}.
That theorem provides global minimal graph functions
$u_i=\lambda\Phi_i$, $i=1,\ldots,n$, with asymptotic slopes
$\lambda e_i$. The resulting map
\[
 \Phi:=(\Phi_1,\ldots,\Phi_n):N\longrightarrow\mathbb R^n
\]
satisfies
\[
 \Phi(x)=x+o(r),\qquad D\Phi=I+o(1)
\]
on the end, where $x$ denotes the asymptotic coordinates and
$r:=|x|$. The gradients of the graph functions are globally
bounded for each fixed metric $g'$; this bound need not be uniform
over all fill-ins.

The modified graph metrics
\[
 \widetilde g_i:=(1+|du_i|_{g'}^2)^{-1/(n-1)}
                    (g'+du_i\otimes du_i)
\]
are complete and asymptotically flat, with nonnegative integrable
scalar curvature \cite[equations~(4.4)--(4.6)]{WZ}.
Their masses satisfy
\[
 m_{\mathrm{ADM}}(\widetilde g_i)
   =(1+\lambda^2)^{1/[2(n-1)]}m_{\mathrm{ADM}}(g')
\]
by \cite[Proposition~5.1]{WZ}.

For each $\widetilde g_i$, \cite[Lemma~6.1]{WZ} provides a positive
conformal potential $\varphi_i$ satisfying
\[
 -\frac{4(n-1)}{n-2}\Delta_{\widetilde g_i}\varphi_i
       +R_{\widetilde g_i}\varphi_i=0,\qquad
 0<\varphi_i\le1,\qquad
 \varphi_i\longrightarrow1
\]
at infinity. The conformal mass formula and the global and localized
energy estimates are given in
\cite[Propositions~6.2--6.3 and Corollary~6.5]{WZ}.
For the coordinate defect defined in
\eqref{eq:marked-Q-definition}, these estimates and
\cite[Proposition~7.1 and equation~(7.7)]{WZ} yield
\eqref{eq:marked-WZ-energy}. The normalization
$\Phi_i=u_i/\lambda$ introduces a fixed factor $\lambda^{-2}$,
which is absorbed into the dimensional constant. The constants
and the admissible defect threshold depend only on $n$.

We use these local estimates without invoking the subsequent
global extraction theorem of Wang--Zhang.
Section~\ref{sec:geometry} constructs the coordinate chart and
comparison boundary needed here, including their compatibility
with the prescribed boundary $\Sigma$.

\subsection{The fixed exterior and the first conformal equation}

Define
\begin{equation}\label{eq:parameters}
 z=\frac{1+\sqrt{1-C/[n(n-1)]}}2,
 \qquad r_{\mathrm S}=z^{2/(n-2)}.
\end{equation}
Thus
\[
 C=4n(n-1)z(1-z),\qquad r_{\mathrm S}^{n-2}=z^2,
 \qquad H_C=(n-1)(2z-1).
\]
Attach to $(M,g)$ the Schwarzschild exterior
\[
 g_N=F^{4/(n-2)}\delta,\qquad
 F(r)=1+\frac{C}{4n(n-1)r^{n-2}},\qquad r\ge r_{\mathrm S}.
\]
Since $F(r_{\mathrm S})=z^{-1}$, its inner
boundary has intrinsic metric $\sigma$ and, with normal toward
increasing $r$, mean curvature $H_C$. The coordinate radius
$r_{\mathrm S}$ is therefore distinct from the intrinsic boundary
radius, which is one.

The boundary identification at $\Sigma$ gives a complete manifold $N$ (we will use $g_N$ to denote the full metric on $N$) with a
continuous metric in normal collar coordinates. The scalar curvature
measure is
\[
 d\mathcal R=R_g\,dV_g+2(H_g-H_C)\,d\sigma\ge0.
\]
The term supported on $\Sigma$ is the contribution of the jump in mean curvature.
Indeed, in a normal collar the identity
$R=R_{g_s}-|\mathrm{II}|^2-H^2-2\partial_sH$ gives this coefficient
and sign \cite[Section~2]{Miao}.

\begin{lemma}\label{lem:globalflatten}
There is a unique solution $u-1\in H_0^1(N)$, tending to one at
infinity, of
\[
 \frac{4(n-1)}{n-2}\int_N\langle du,d\phi\rangle_{g_N}\,dV_{g_N}
       +\int_Nu\phi\,d\mathcal R=0
 \qquad (\phi\in H_0^1(N)).
\]
It is positive, at most one, continuous across $\Sigma$, and smooth
up to $\Sigma$ from each side. With both normal derivatives pointing
from $M$ into the exterior, it satisfies
\[
 \frac{4(n-1)}{n-2}
       (\partial_{\nu,+}u-\partial_{\nu,-}u)
       =2(H_g-H_C)u.
\]
Consequently $\gamma=u^{4/(n-2)}g_N$ is complete, scalar flat on each
side, and has matching mean curvatures at $\Sigma$. Its exterior is
\[
 \gamma=w_\infty^{4/(n-2)}\delta,\qquad w_\infty=Fu,
 \qquad \Delta_\delta w_\infty=0,
\]
where
\[
 w_\infty=1+\frac{m_\gamma}{2r^{n-2}}
                    +O_{j,\theta_0}(r^{1-n})
\]
for every fixed finite $j$.
\end{lemma}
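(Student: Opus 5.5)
Write $v:=u-1$, so that $v\in H_0^1(N)$ is sought as the minimizer of the quadratic functional
\[
 E(v)=\frac{2(n-1)}{n-2}\int_N|dv|_{g_N}^2\,dV_{g_N}+\frac12\int_N (1+v)^2\,d\mathcal R ,
\]
or, equivalently, as the solution of the linear problem $\mathcal B(v,\phi)=-\int_N\phi\,d\mathcal R$ for all $\phi\in H_0^1(N)$. Here
\[
 \mathcal B(v,\phi):=\frac{4(n-1)}{n-2}\int_N\langle dv,d\phi\rangle_{g_N}\,dV_{g_N}+\int_N v\phi\,d\mathcal R .
\]
The measure $d\mathcal R$ is nonnegative and compactly supported in $M$, since the Schwarzschild exterior is scalar flat. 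Its absolutely continuous part is bounded. Its singular part $2(H_g-H_C)\,d\sigma$ is a bounded density on the smooth hypersurface $\Sigma$, so the trace inequality on $M$ gives $\int|\phi|^2d\mathcal R\le K(\|d\phi\|_2^2+\|\phi\|_{L^2(M)}^2)$. The local $L^2$ term is in turn controlled by $\|d\phi\|_2$ through the global Sobolev inequality $\|\phi\|_{2n/(n-2)}\le K\|d\phi\|_2$, valid because $N$ is asymptotically flat with one end. Hence $\mathcal B$ is bounded on $H_0^1(N)$. Since $d\mathcal R\ge0$, it is coercive, with $\mathcal B(v,v)\ge\frac{4(n-1)}{n-2}\|dv\|_2^2$. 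The functional $\phi\mapsto\int\phi\,d\mathcal R$ is bounded by the same estimates. Lax--Milgram then gives existence and uniqueness of $v$, and uniqueness among solutions with $u-1\in H_0^1$ follows from coercivity.

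\textbf{Sign properties.} For $0<u\le1$ I will test the equation with truncations.
\begin{itemize}
\item Testing with $\phi=(u-1)^+=v^+\in H_0^1$ gives $\frac{4(n-1)}{n-2}\|dv^+\|^2+\int u\,v^+d\mathcal R=0$. On $\{v>0\}$ we have $u>1>0$, so both terms are nonnegative, hence both vanish, hence $v^+=0$ and $u\le1$.
\item Testing with $\phi=u^-=\max(-u,0)$ gives $-\frac{4(n-1)}{n-2}\|du^-\|^2-\int (u^-)^2\,d\mathcal R=0$, so $u^-$ is constant. The function $u^-$ lies in $H_0^1$ because $u\to1$ at infinity, so $u^-=0$ and $u\ge0$.
\item Strict positivity then follows from the weak Harnack inequality on each side. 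On $M$ the equation $L_gu=0$ holds with $R_g$ bounded. At $\Sigma$, the transmission condition below, which has a nonnegative coefficient, prevents an interior zero by the Hopf lemma applied from both sides. In the exterior $u$ is harmonic for the conformally flat metric, so $Fu$ is $\delta$-harmonic. Connectedness finishes the argument.
\end{itemize}

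\textbf{Regularity and the transmission condition.} First, $u$ is Hölder continuous across $\Sigma$ by De Giorgi--Nash--Moser, since $g_N$ is only continuous (Lipschitz in collar coordinates) and $d\mathcal R$ is a measure of the Morrey class allowed there (codimension one density). Taking test functions supported on one side, $u$ satisfies $L_{g}u=0$ in $M^\circ$ and the flat conformal equation in the exterior. Integrating by parts on each side against a general $\phi$ yields the Neumann jump relation displayed in the statement. Smoothness up to $\Sigma$ from each side is the step I expect to be the main obstacle. I would treat it as a transmission problem: flatten $\Sigma$ in the normal collar coordinates, in which each one-sided metric is smooth up to the boundary. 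Then use the difference quotient method in tangential directions to gain tangential regularity (the transmission condition is preserved under tangential differentiation). Normal derivatives are then recovered from the equation on each side. Finally, Schauder estimates for transmission problems with smooth coefficients on each side give $C^\infty$ up to $\Sigma$ one-sidedly.

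\textbf{Consequences for $\gamma$.} Scalar flatness of $\gamma$ on each side is the conformal transformation law $R_\gamma=u^{-(n+2)/(n-2)}L_{g_N}u$. The mean curvature transforms as
\[
 H_\gamma=u^{-2/(n-2)}\Bigl(H+\tfrac{2(n-1)}{n-2}u^{-1}\partial_\nu u\Bigr).
\]
Inserting the jump relation gives $H_\gamma^{+}-H_\gamma^{-}=u^{-2/(n-2)}\bigl(H_C-H_g+(H_g-H_C)\bigr)=0$. Completeness of $\gamma$ follows since $u\to1$ at infinity and $u$ is bounded below by a positive constant on compact sets. On the exterior, $g_N=F^{4/(n-2)}\delta$ with $F$ $\delta$-harmonic, so $L_{g_N}u=0$ is equivalent to $\Delta_\delta(Fu)=0$. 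The function $w_\infty=Fu$ is positive, harmonic outside $B_{r_{\mathrm S}}$, and tends to one. Its expansion in spherical harmonics gives $w_\infty=1+a\,r^{2-n}+O_{j,\theta_0}(r^{1-n})$ for every $j$. With the normalization stated before the lemma, $\gamma$ has mass $2a$, which defines $m_\gamma$ and gives $a=m_\gamma/2$.
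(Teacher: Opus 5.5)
Your proposal is correct and follows essentially the paper's route: Lax--Milgram via the trace and global Sobolev inequalities, truncation tests for $0\le u\le1$, tangential difference quotients plus the one-sided equations for smoothness, integration by parts for the jump, Hopf at $\Sigma$ for positivity, then the conformal formulas and the exterior harmonic expansion. Two steps are asserted rather than shown. First, $u\to1$ at infinity must be derived from $u-1\in H^1_0(N)$: $Fu-1$ is $\delta$-harmonic and lies in $L^{2n/(n-2)}$ on the end, so mean values over $B_{|x|/2}(x)$ force it to zero, and this also rules out $u\equiv0$ on the exterior. Second, at a zero $p\in\Sigma$ it is cleaner, as in the paper, to combine Hopf from the exterior, $\partial_{\nu,+}u(p)>0$, with $\partial_{\nu,-}u(p)\le0$ from $u\ge0$; this also covers the case $u\equiv0$ in $M$, which your two-sided Hopf argument leaves to ``connectedness''.
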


\begin{proof}
The bilinear form
\[
 \mathcal B(\phi,\psi)=
 \frac{4(n-1)}{n-2}\int_N\langle d\phi,d\psi\rangle_{g_N}\,dV_{g_N}
                 +\int_N\phi\psi\,d\mathcal R
\]
is bounded and coercive on $H_0^1(N)$. Boundedness of the compactly
supported volume and hypersurface terms follows from the fixed metric
Sobolev and trace inequalities; coercivity follows from
$d\mathcal R\ge0$. Lax--Milgram applied to $1-u$ gives existence and
uniqueness. Negative part and upper truncation tests give $0\le u\le1$.
Standard tangential difference quotients for the transmission equation,
followed by the one-sided equations to recover normal derivatives,
give smoothness up to the interface and a common continuous trace.

Integration by parts on the two pieces shows that the coefficient of
the interface test function is
\[
 \frac{4(n-1)}{n-2}
       (\partial_{\nu,-}u-\partial_{\nu,+}u)+2(H_g-H_C)u.
\]
Its vanishing is the asserted transmission condition. The sign of the
exterior derivative reflects its inner outward normal $-\nu$.
The exterior solution is nonzero because $u-1\in L^{2n/(n-2)}$;
therefore it is positive there. A zero on $\Sigma$ would give
$\partial_{\nu,+}u>0$ by Hopf's lemma and
$\partial_{\nu,-}u\le0$ by nonnegativity inside, contradicting the
transmission condition. The interior maximum principle now gives
$u>0$ everywhere.

Conformal covariance gives $\Delta_\delta(Fu)=0$ on the end.
The homogeneous Sobolev condition and harmonic estimates on annuli
imply $Fu\to1$. Exterior harmonic expansion then gives the stated
asymptotics; the coefficient is $m_\gamma/2$ by our mass convention.
Thus $u$ has a positive global lower bound for this fixed fill-in,
which proves completeness of $\gamma$. Scalar flatness follows from
the conformal equation. Finally,
\[
 H_{\gamma,-}-H_{\gamma,+}
 =u^{-n/(n-2)}\left[
 u(H_g-H_C)+\frac{2(n-1)}{n-2}
                 (\partial_{\nu,-}u-\partial_{\nu,+}u)\right]=0.
\]
\end{proof}

\subsection{Smoothing a corner with control of the connection}

Matching mean curvatures removes the scalar curvature measure at
$\Sigma$. It does not identify the full second fundamental forms, and thus
does not make the metric smooth. The next approximation supplies both
the smooth metrics needed by the external theorems and the connection
control needed to transfer their Hessian estimates.

\begin{lemma}\label{lem:corner}
Let $\gamma$ be a complete metric on a manifold without boundary,
with one asymptotically flat end having compact complement. Suppose
that $\gamma$ is continuous across a compact smooth hypersurface
and smooth up to it from each side. Suppose $R_\gamma\ge0$ on each side and $H_--H_+\ge0$, with
both mean curvatures computed toward the end. Suppose that, outside
a compact set,
\[
 \gamma=w_\infty^{4/(n-2)}\delta,\qquad
 \Delta_\delta w_\infty=0,\qquad w_\infty>0,\qquad
 w_\infty=1+\frac{m}{2r^{n-2}}+O(r^{1-n}).
\]
Then there are smooth complete metrics $\gamma_\varepsilon$ with
nonnegative scalar curvature and exactly harmonically flat ends
in the same chart, such that
\begin{equation}\label{eq:standalone-corner-limit}
 \begin{gathered}
 \sup_N|\gamma_\varepsilon-\gamma|_\gamma\longrightarrow0,
 \qquad m_{\gamma_\varepsilon}\longrightarrow m,\\
 \int_N|\nabla^{\gamma_\varepsilon}-\nabla^\gamma|_\gamma^2
                                      \,dV_\gamma\longrightarrow0.
 \end{gathered}
\end{equation}
In particular, $m\ge0$. The differentiable structure is fixed by
the one-sided Gaussian collars and unchanged away from $\Sigma$;
the connection difference is measured almost everywhere in this
structure. The approximation width may depend on the fixed metric.
\end{lemma}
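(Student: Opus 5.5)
This follows Miao's corner smoothing \cite{Miao}, with the conformal correction done in the fixed harmonic end and with explicit bookkeeping of the connection. Work in the Gaussian collar $\Sigma\times(-\delta_0,\delta_0)$ of $\gamma$, where $\gamma=dt^2+g_t$ with $g_t$ smooth for $t\le0$ and $t\ge0$ separately and continuous at $t=0$. Fix a mollifier $\rho$ and set $g^\varepsilon_t:=\int g_{t-\varepsilon^2 s}\rho(s)\,ds$ for $|t|<\varepsilon$, with a cutoff interpolating back to $g_t$ for $\varepsilon\le|t|\le2\varepsilon$. Define $\widehat\gamma_\varepsilon:=dt^2+g^\varepsilon_t$.

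\textbf{Step 1 (metric and connection closeness).} Because $g_t$ is Lipschitz in $t$, we have $\sup|\widehat\gamma_\varepsilon-\gamma|=O(\varepsilon^2)$. The Christoffel symbols differ only in their $t$-derivative components. These components are uniformly bounded and supported in a slab of volume $O(\varepsilon)$. Hence $\int|\nabla^{\widehat\gamma_\varepsilon}-\nabla^\gamma|^2\,dV_\gamma=O(\varepsilon)\to0$.

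\textbf{Step 2 (scalar curvature control).} In the slab, the collar formula $R=R_{g_t}-|\mathrm{II}|^2-H^2-2\partial_tH$ shows that $R_{\widehat\gamma_\varepsilon}$ is bounded below by $-K$ outside $|t|<\varepsilon^2$. Inside $|t|<\varepsilon^2$, the term $-2\partial_tH$ is approximately $2(H_--H_+)\varepsilon^{-2}\rho(t/\varepsilon^2)\ge0$ up to $O(1)$ errors. So $R_{\widehat\gamma_\varepsilon}\ge-K$, with the negative part supported in a set of volume $O(\varepsilon)$. Therefore $\|R^-_{\widehat\gamma_\varepsilon}\|_{L^{n/2}}\to0$. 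Outside the slab $\widehat\gamma_\varepsilon=\gamma$, so the harmonically flat end is untouched.

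\textbf{Step 3 (conformal correction).} Solve $-\frac{4(n-1)}{n-2}\Delta v_\varepsilon+R^-_{\widehat\gamma_\varepsilon}\cdot(-1)\,v_\varepsilon=0$; more precisely, solve $L_{\widehat\gamma_\varepsilon}$-type equations with the potential $-R^-$ replacing $R$. Take $v_\varepsilon-1\in H_0^1$. Smallness of $\|R^-\|_{L^{n/2}}$ and the uniform Sobolev inequality give unique solvability by Lax--Milgram. They also give $\|dv_\varepsilon\|_2+\|v_\varepsilon-1\|_{L^{2n/(n-2)}}\to0$. De Giorgi--Moser with the uniform bound on $R^-$ then gives $\sup|v_\varepsilon-1|\to0$. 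Set $\gamma_\varepsilon:=v_\varepsilon^{4/(n-2)}\widehat\gamma_\varepsilon$. Then $R_{\gamma_\varepsilon}=v_\varepsilon^{-4/(n-2)}R^+_{\widehat\gamma_\varepsilon}\ge0$, and $\gamma_\varepsilon$ is smooth after one also smooths the potential, e.g.\ by using a smooth cutoff version of $R^-$ that dominates it. On the end, $v_\varepsilon$ is $\delta$-harmonic after multiplying by $w_\infty$. So $\gamma_\varepsilon=(w_\infty v_\varepsilon)^{4/(n-2)}\delta$ with $\Delta_\delta(w_\infty v_\varepsilon)=0$ in the same chart, which makes the end exactly harmonically flat.

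\textbf{Step 4 (mass and connection).} The mass changes by the $r^{2-n}$ coefficient of $v_\varepsilon$. By Green's identity this coefficient is a constant multiple of $\int R^-v_\varepsilon\,dV\to0$, so $m_{\gamma_\varepsilon}\to m$. The connection difference due to the conformal factor is $O(|d\log v_\varepsilon|)$, whose $L^2$ norm tends to zero. Combined with Step 1, this gives \eqref{eq:standalone-corner-limit}. Completeness is preserved by uniform metric closeness. Finally, the positive mass theorem of Khuri--Wang--Wang applies to $\gamma_\varepsilon$, since the ends are harmonically flat and $R\in L^1$ (the $R^+$ spike has bounded integral). So $m_{\gamma_\varepsilon}\ge0$, and letting $\varepsilon\to0$ gives $m\ge0$.

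\textbf{Main obstacle.} The delicate step is Step 2: verifying that the negative part of the scalar curvature is uniformly bounded rather than of size $\varepsilon^{-1}$. This needs the two-scale mollification, with the $\varepsilon^2$ kernel inside an $\varepsilon$ interpolation, so that the $\partial_t^2 g$ terms from the cutoff are $O(1)$ while the mean-curvature jump produces only a positive spike. If bounded $R^-$ fails, a fallback is to note that $L^{n/2}$-smallness, which is all Step 3 needs, still holds whenever $R^-=O(\varepsilon^{-1})$ on a set of volume $O(\varepsilon^2)$, although Step 3's uniform sup bound then requires more care.
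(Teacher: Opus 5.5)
Your proposal is correct and follows essentially the same route as the paper. That route has four parts: Miao's two-scale collar mollification, whose negative scalar curvature is uniformly bounded on a shrinking slab, so the connection difference is bounded on a set of volume $O(\varepsilon)$; a conformal correction $-\frac{4(n-1)}{n-2}\Delta b_\varepsilon=f_\varepsilon b_\varepsilon$ with a smooth $f_\varepsilon\ge R^-$; preservation of the harmonic end because $w_\infty b_\varepsilon$ is $\delta$-harmonic; and Khuri--Wang--Wang applied to the smooth metrics. The paper cites the mass convergence and uniform closeness from Lee--Lesourd--Unger, whereas you prove them directly via the flux identity and De Giorgi--Moser; the hedge in your final paragraph is unnecessary, since your Step 2 computation is exactly Miao's estimate showing $R^-$ is uniformly bounded.
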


\begin{proof}
We use the corner approximation of Miao~\cite[Section~3]{Miao}, in
the form of Lee--Lesourd--Unger~\cite[Theorem~3.2, Lemma~3.7,
and Remark~3.8]{LLU}. Their approximation and conformal correction
are independent of the dimension and of any spin assumption.
We record the additional connection estimate needed here.

The smooth metrics $\widehat\gamma_\varepsilon$ agree with
$\gamma$ outside a shrinking collar, converge uniformly, and have
uniformly bounded first derivatives in the fixed Gaussian collars.
Their negative scalar curvature is uniformly bounded and supported
in that collar. Consequently
\[
 \|\nabla^{\widehat\gamma_\varepsilon}-\nabla^\gamma\|_{L^2(\gamma)}
 \longrightarrow0.
\]
Choose smooth nonnegative functions
$f_\varepsilon\ge (R_{\widehat\gamma_\varepsilon})^-$,
supported in a slightly larger shrinking collar and satisfying
$\|f_\varepsilon\|_{L^p}\to0$ for every finite $p$. The conformal correction in \cite[Lemma~2.4 and the proof of
Theorem~3.3]{LLU} solves
\[
 -\frac{4(n-1)}{n-2}\Delta_{\widehat\gamma_\varepsilon}b_\varepsilon
       =f_\varepsilon b_\varepsilon,
 \qquad b_\varepsilon-1\in H_0^1(N),
\]
and gives $b_\varepsilon\ge1$ and
$\|b_\varepsilon-1\|_\infty\to0$. Testing the equation with
$b_\varepsilon-1$ and using the common Sobolev inequality gives
\[
 \|db_\varepsilon\|_2
 \le K(\gamma)\|f_\varepsilon\|_{2n/(n+2)}\longrightarrow0.
\]
Thus $\gamma_\varepsilon:=
b_\varepsilon^{4/(n-2)}\widehat\gamma_\varepsilon$ is smooth,
complete, and has nonnegative scalar curvature. Uniform convergence
and convergence of its ADM mass are part of the cited approximation.
The conformal connection formula gives
\[
 |\nabla^{\gamma_\varepsilon}
       -\nabla^{\widehat\gamma_\varepsilon}|_\gamma
 \le K(\gamma)b_\varepsilon^{-1}|db_\varepsilon|_\gamma,
\]
which proves the asserted connection convergence.

The correction is $\gamma$-harmonic near infinity. Conformal
covariance therefore gives $\Delta_\delta(w_\infty b_\varepsilon)=0$.
Harmonic expansion shows that the end remains exactly harmonically
flat, with the required differentiated asymptotics. Finally, applying \cite[Theorem~1.1]{KWW} to the
smooth metrics and taking the mass limit gives $m\ge0$.
\end{proof}

For the coordinate map $\bar\Phi$ of Section~\ref{sec:geometry}, on any
region where its differential is uniformly bounded,
\[
 \Hess_\gamma\bar\Phi
   =\Hess_{\gamma_\varepsilon}\bar\Phi
        +(\nabla^{\gamma_\varepsilon}-\nabla^\gamma)*d\bar\Phi.
\]
Thus the squared $L^2$ error in the Hessian estimate tends to zero.
For each fixed fill-in, we choose a sufficiently fine approximation;
the collar width need not be uniform across fill-ins.
We do not claim global $L^2$ convergence of the metric coefficients,
since the mass terms need not be square integrable.
\subsection{Brown--York positivity and rigidity for a round boundary}

\begin{lemma}\label{lem:BYpositive}\label{lem:EuclideanBYrigidity}
Let $(M^n,h)$, $n\ge3$, be smooth, compact, connected, and oriented,
with nonnegative scalar curvature, entire boundary $\Sigma$
isometric to the unit round sphere, and positive outward mean
curvature $H_h$. Then
\[
 \int_\Sigma(n-1-H_h)\,d\sigma\ge0.
\]
Moreover, if $R_h=0$ and $H_h=n-1$, then $(M,h)$ is isometric
to the closed Euclidean unit ball.
\end{lemma}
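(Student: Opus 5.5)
We glue $(M,h)$ along $\Sigma$ to the flat exterior $\{|x|\ge1\}\subset\R^n$ and read off the Brown--York inequality from the positive mass theorem. The obstacle is that the Euclidean exterior has inner boundary mean curvature $n-1$, while $H_h$ may be smaller or larger at different points. A direct gluing therefore gives a scalar curvature measure $2(H_h-(n-1))\,d\sigma$ of indefinite sign. To handle this we follow Shi--Tam, replacing the flat exterior by a quasi-spherical exterior adapted to the data $H_h$.

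\emph{Step 1: the exterior.} On $\{r\ge1\}\subset\R^n$ we take metrics $\bar g=w^2dr^2+r^2\sigma$ and solve Bartnik's parabolic equation
\[
 r\,\partial_rw=\frac{w}{n-1}\Bigl(\Delta_{r^2\sigma}\!\!\!\!\!\quad w\cdot\frac{r^2}{\,\cdot\,}\Bigr)\text{-type terms}+\frac{(n-2)}{2}(w-w^3),
\]
chosen so that $R_{\bar g}=0$. The initial data $w|_{r=1}=(n-1)/H_h$ makes the inner boundary have mean curvature $H_h$. Since the foliation is by round spheres, the equation reduces to a scalar parabolic PDE on $\Sph^{n-1}$. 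This PDE has global positive solutions with $w\to1$, and the exterior is asymptotically flat. Gluing it to $(M,h)$ then gives matching mean curvatures and $R\ge0$ on each side. The corner smoothing in Lemma~\ref{lem:corner}, with the positive mass theorem \cite{KWW} applied to the smooth approximations, yields $m\ge0$. This is the same route Lemma~\ref{lem:corner} follows; here harmonic flatness is replaced by ordinary asymptotic flatness with decay $q=n-2$.

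\emph{Step 2: monotonicity.} We set
\[
 m(r)=\frac{r^{n-2}}{2|\Sph^{n-1}|}\int_{S_r}(1-w^{-1})\,dA
\]
up to the paper's normalization, which is a multiple of $\int_{S_r}(H_0-H)$ with $H_0=(n-1)/r$. We compute that $m'(r)\le0$ using the equation. Since $m(r)\to m_{\mathrm{ADM}}$, we get $\int_\Sigma(n-1-H_h)\,d\sigma\ge c_n\,m\ge0$.

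\emph{Step 3: rigidity.} If $R_h=0$ and $H_h=n-1$, then $w\equiv1$, so the exterior is exactly flat and $\int_\Sigma(n-1-H_h)=0$, giving mass zero. We then invoke the equality case of the positive mass theorem for corner metrics. Namely, by the Miao/Shi--Tam argument, zero mass forces the glued manifold to be flat and the corner to be removable. As a result, $(M,h)$ is a flat manifold whose boundary is a round unit sphere with second fundamental form equal to $\sigma$. Developing the flat manifold into $\R^n$ from the boundary, and using connectedness together with the simple connectivity of the universal cover of the boundary data, identifies $M$ with the unit ball. If the smooth-category equality case is unavailable, we instead argue directly: zero mass is stable under the smoothing, and the Brown--York quantity is then extremal for $h$. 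Perturbing $h$ by a conformal change with $R<0$ somewhere in the interior would produce negative mass, which is a contradiction. This forces $\mathrm{Ric}_h=0$, and a further deformation argument forces $h$ to be flat.

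The main obstacle is Step 1: proving global existence and positivity of the parabolic solution in all dimensions, given only that $H_h>0$. The round-sphere foliation is what makes this tractable, since the sphere $\Sph^{n-1}$ is convex and carries a unit metric, which is the standard Shi--Tam hypothesis. We would cite Shi--Tam directly, and also Eichmair--Miao--Wang for $n\ge3$ with no spin assumption, relying on \cite{KWW} for the positive mass theorem.
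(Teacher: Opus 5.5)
Your Steps~1--2 follow the paper's route to the inequality: the Shi--Tam quasi-spherical exterior with $w|_{r=1}=(n-1)/H_h$, monotonicity, corner smoothing, and the Khuri--Wang--Wang mass inequality. Two details need fixing. Your $m(r)$ carries an extra factor $r^{n-1}$ unless $dA$ there means $d\sigma$ on the unit sphere. Also, Lemma~\ref{lem:corner} is stated for harmonically flat ends, so you must check (via the Shi--Tam end asymptotics and the $W^{2,p}_{-q}$ bound on the conformal correction) that the smoothed metrics keep decay $q>(n-2)/2$; the paper does this.

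The genuine gap is the rigidity in Step~3. Your first route invokes an equality case of the positive mass theorem for corner metrics. That is not something you can simply cite for all $n\ge3$ without spin. Shi--Tam's higher-dimensional rigidity relied on spin, and the spin-free version is what the Eichmair--Miao--Wang argument followed in the paper provides. Proving the corner equality case would need the two steps below anyway. The paper never uses an equality case of the positive mass theorem, only the boundary inequality just proved.

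Your fallback points in the right direction, but it misidentifies the mechanism and stops short. Since $R_h=0$ already, a conformal change only probes scalar curvature. To reach $\operatorname{Ric}_h$ you must perturb by an arbitrary symmetric tensor $\kappa$ supported in $M^\circ$, set $h_t=h+t\kappa$, and restore scalar flatness with the Dirichlet factor $L_{h_t}b_t=0$, $b_t|_\Sigma=1$. Then $b_t^{4/(n-2)}h_t$ has boundary metric $\sigma$ and positive mean curvature, so the inequality makes $t=0$ a maximum of the total mean curvature. Differentiating gives $0=\frac{2(n-1)}{n-2}\int_\Sigma\partial_\nu\dot b_0\,d\sigma=\frac12\int_M\dot R_0\,dV_h=-\frac12\int_M\langle\operatorname{Ric}_h,\kappa\rangle_h\,dV_h$, hence $\operatorname{Ric}_h=0$; no mass enters.

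The real missing idea is the passage from $\operatorname{Ric}_h=0$ to flatness. This fails in general for $n\ge4$, and ``a further deformation argument'' does not produce it. The paper obtains it from the Eichmair--Miao--Wang boundary rigidity lemma; for a round boundary you can see it directly. The traced Gauss equation with $R_h=0$, $\operatorname{Ric}_h(\nu,\nu)=0$, $R_\Sigma=(n-1)(n-2)$ and $H_h=n-1$ gives $|\mathrm{II}|^2=n-1=H_h^2/(n-1)$. This is the equality case of Cauchy--Schwarz, so $\mathrm{II}=\sigma$, matching the unit sphere in $\R^n$. Hence the gluing of $(M,h)$ to the flat exterior is $C^{1,1}$ and Ricci flat on both sides. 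It is therefore smooth and real analytic in harmonic coordinates, and flat by unique continuation from the Euclidean end. A complete flat manifold that is Euclidean outside a compact set is $\R^n$, so $M$ is the unit ball. Only at this point does your developing-map conclusion apply.
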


\begin{proof}
The Shi--Tam extension~\cite{ShiTam} attaches a scalar flat,
asymptotically flat exterior to $(M,h)$, with the same boundary
metric and mean curvature, whose ADM mass satisfies
\[
 m_{\mathrm{ADM}}
 \le \frac{1}{(n-1)|\Sph^{n-1}|}
            \int_\Sigma(n-1-H_h)\,d\sigma.
\]
For this formulation and its application to the glued manifold,
see \cite[Section~4, proof of Theorem~1.8]{LLU}.
The glued metric is complete and has nonnegative scalar curvature
away from the corner, where the mean curvatures match. Approximate
it using \cite[Theorem~3.2 and Remark~3.8]{LLU}. On the end, the
correcting factor is harmonic, and its difference from one belongs
to $W^{2,p}_{-q}$ for $p>n$ and $(n-2)/2<q<n-2$. The end estimates
of \cite[Lemma~2.5 and equation~(2.23)]{ShiTam}, together with scaled
Schauder estimates, give the pointwise decay required above.
Applying the smooth positive mass inequality
\cite[Theorem~1.1]{KWW} and passing to the limit of the ADM masses
gives $m_{\mathrm{ADM}}\ge0$, proving the boundary inequality.
Here we use the approximation theorem of \cite{LLU}, whose proof
has no dimension or spin restriction, and obtain positivity from
\cite{KWW}.

Now suppose $R_h=0$ and $H_h=n-1$. We follow the compact variation
argument of \cite[Section~4]{EMW}. For a smooth symmetric tensor
$\kappa$ supported in $M^\circ$, set $h_t:=h+t\kappa$ and solve
\[
 L_{h_t}b_t=0,\qquad b_t|_\Sigma=1.
\]
For small positive and negative $t$, this gives a smooth positive
family with $b_0=1$. The metrics $b_t^{4/(n-2)}h_t$ are scalar flat,
have boundary metric $\sigma$, and have positive mean curvature.
The boundary inequality just proved shows that their total boundary
mean curvature has a maximum at $t=0$. Differentiating at zero,
as in \cite[Lemma~4]{EMW}, yields
\[
 0=\frac{2(n-1)}{n-2}\int_\Sigma\partial_\nu\dot b_0\,d\sigma
   =\frac12\int_M\dot R_0\,dV_h
   =-\frac12\int_M
       \langle\operatorname{Ric}_h,\kappa\rangle_h\,dV_h,
\]
where dots denote derivatives at $t=0$. The divergence terms in
the scalar curvature variation integrate to zero because $\kappa$
has compact support. Since $\kappa$ is arbitrary,
$\operatorname{Ric}_h=0$. The boundary rigidity theorem
\cite[Lemma~5]{EMW} therefore applies: $\Sigma$ isometrically embeds
as the unit sphere in $\R^n$, and $H_h=n-1$ equals its Euclidean
mean curvature. Hence $(M,h)$ is isometric to the closed Euclidean
unit ball.
\end{proof}

\section{Two conformal identities}\label{sec:energy}

We now extract the two identities that drive the proof. The first
measures how much of the fixed exterior mass is removed by the global
conformal equation. The second controls the energy required to preserve
the given boundary while making the exterior exactly Euclidean.

\subsection{The mass identity}

Lemmas~\ref{lem:globalflatten} and~\ref{lem:corner} give
$m_\gamma\ge0$. On the fixed exterior,
\begin{equation}\label{eq:wexpansion}
 \gamma=w_{\infty}^{4/(n-2)}\delta,\qquad
 w_{\infty}=Fu=1+\frac{m_\gamma}{2r^{n-2}}+O(r^{1-n}),\qquad 0<w_{\infty}\le F.
\end{equation}
The smoothing used to prove the sign of the mass changes none of the
identities for the original metric.

\begin{lemma}\label{lem:budget}
The residual mass satisfies
\begin{equation}\label{eq:exactbudget}
 2(n-1)|\Sph^{n-1}| m_\gamma=\frac{C|\Sph^{n-1}|}{n}
       -\int_MR_gu\,dV_g
       -2\int_\Sigma(H_g-H_C)u\,d\sigma.
\end{equation}
In particular, with $J=\int_Mu\,dV_g$,
\[
 0\le2(n-1)|\Sph^{n-1}| m_\gamma
       \le C\left(\frac{|\Sph^{n-1}|}{n}-J\right),\qquad
 0\le m_\gamma\le\frac{C}{2n(n-1)}.
\]
These are the inequalities in \eqref{eq:mainbudget}.
\end{lemma}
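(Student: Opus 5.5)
The plan is to integrate the weak equation of Lemma~\ref{lem:globalflatten} against the constant test function, cut off at a large sphere $S_r$, and then read off the flux at infinity in two ways. On the exterior the equation is $\Delta_\delta w_\infty=0$ with $w_\infty=Fu$, and on $M$ it is $\frac{4(n-1)}{n-2}\Delta_gu=R_gu$, with the transmission condition on $\Sigma$. Integrating over $M$ gives
\[
 \frac{4(n-1)}{n-2}\int_\Sigma\partial_{\nu,-}u\,d\sigma=\int_MR_gu\,dV_g .
\]
The transmission condition then converts this into
\[
 \frac{4(n-1)}{n-2}\int_\Sigma\partial_{\nu,+}u\,d\sigma
 =\int_MR_gu\,dV_g+2\int_\Sigma(H_g-H_C)u\,d\sigma .
\]
Here $d\sigma$ is the $g$-area, which equals the $g_N$-area, since both are the unit round measure.

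Next I would express the exterior flux in Euclidean terms. On $r\ge r_{\mathrm S}$, $w_\infty$ is Euclidean harmonic, so $\int_{\{|x|=\rho\}}\partial_rw_\infty\,dA_\delta$ is independent of $\rho$. As $\rho\to\infty$ the expansion \eqref{eq:wexpansion} makes this flux equal to $-\frac{(n-2)m_\gamma}{2}|\Sph^{n-1}|$. At $\rho=r_{\mathrm S}$ I would write
\[
 \partial_r(Fu)=F'u+F\partial_ru .
\]
The $g_N$-unit normal is $\nu=F^{-2/(n-2)}\partial_r$, and
\[
 dA_{g_N}=F^{2(n-1)/(n-2)}dA_\delta .
\]
Hence
\[
 \int_{S_{r_{\mathrm S}}}F\partial_ru\,dA_\delta
 =F^{1-2(n-1)/(n-2)+2/(n-2)}\int_\Sigma\partial_{\nu,+}u\,d\sigma
 =F^{-1}\int_\Sigma\partial_{\nu,+}u\,d\sigma .
\]
The factor $F^{-1}=z$ is constant on $\Sigma$. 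The $F'$ term involves $\int_\Sigma u$, and I would keep careful track of it, since it must combine with the $2(H_g-H_C)u$ term. The check is that the constants reproduce $C|\Sph^{n-1}|/n$ exactly when $u\equiv1$ on the model cap; there $R_g=C$ and $J=|\Sph^{n-1}|/n$ for the cap volume in the $z$-normalization.

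I expect this bookkeeping to be the main obstacle. The $u$-weighted boundary term produced by $F'(r_{\mathrm S})$ does not obviously cancel. The cleanest route is probably to avoid splitting at all: integrate the divergence form $\operatorname{div}(\ldots)$ of the full conformal Laplacian of $g_N$ over the region inside $S_\rho$. On the exterior, $R_{g_N}=0$, and I would use the identity
\[
 \frac{4(n-1)}{n-2}\Delta_{g_N}u=F^{-(n+2)/(n-2)}\,\frac{4(n-1)}{n-2}\Delta_\delta(Fu),
\]
which holds because $\Delta_\delta F=0$. I would then compute
\[
 \int_{S_\rho}\partial_\nu u\,dA_{g_N}
 =\int_{S_\rho}F^{(n-2)/(n-2)\cdot}\ldots
\]
and take $\rho\to\infty$, where $F\to1$. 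This gives $\int_{S_\rho}\partial_\nu u\,dA_{g_N}\to$ the flux of $Fu-F$, namely $-\frac{n-2}{2}|\Sph^{n-1}|\left(m_\gamma-\frac{C}{2n(n-1)}\right)$, because the mass of $F$ is $C/(2n(n-1))$. Multiplying by $\frac{4(n-1)}{n-2}$ turns the weak equation into \eqref{eq:exactbudget}:
\[
 2(n-1)|\Sph^{n-1}|\left(\frac{C}{2n(n-1)}-m_\gamma\right)
 =\int_MR_gu+2\int_\Sigma(H_g-H_C)u .
\]
Note $\frac{C}{2n(n-1)}\cdot2(n-1)=\frac Cn$. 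This matches the target, so I would adopt this route.

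The inequalities then follow directly. Since $R_g\ge C$, $H_g\ge H_C$ and $u>0$, the right side of \eqref{eq:exactbudget} is at most $C|\Sph^{n-1}|/n-CJ$. Combined with $m_\gamma\ge0$ from Lemma~\ref{lem:corner}, this gives the first chain. Dropping $J\ge0$ gives $m_\gamma\le C/(2n(n-1))$.
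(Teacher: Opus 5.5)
Your final route is correct and is essentially the paper's argument. Testing the weak equation against the constant inside $S_\rho$ is the same as the paper's split integration over $M$ and the truncated exterior combined with the transmission condition. The flux of $u$ at infinity is $-(n-2)|\Sph^{n-1}|b_\infty$ with $m_\gamma=C/[2n(n-1)]+2b_\infty$, which is exactly your $Fu-F$ computation. The $F'(r_{\mathrm S})$ obstacle in your first attempt disappears once you note that $\Delta_{g_N}u=0$ on the exterior: the $g_N$-flux of $u$ itself, rather than the Euclidean flux of $w_\infty$, is conserved and can be sent directly to infinity.
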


\begin{proof}
Integrating the conformal equation converts its compact scalar and
boundary terms into the flux at infinity. That flux is exactly the
change from the fixed Schwarzschild mass to $m_\gamma$.
Integrate separately over $M$ and the exterior truncated by a large
coordinate sphere $S_r$. Use $\nu$ pointing from $M$ into the
exterior at $\Sigma$, and the outward normal on $S_r$. The
interior equation gives
\[
 \frac{4(n-1)}{n-2}
       \int_\Sigma\partial_{\nu,-}u\,d\sigma
          =\int_MR_gu\,dV_g.
\]
The exterior equation is $\Delta_{g_N}u=0$. Thus its flux through
$S_r$ equals the flux through $\Sigma$ measured in the direction
$\nu$. Substituting the transmission condition yields
\[
 \frac{4(n-1)}{n-2}
       \lim_{r\to\infty}
          \int_{S_r}\partial_{\nu_{g_N}}u\,dA_{g_N}
 =\int_MR_gu\,dV_g
       +2\int_\Sigma(H_g-H_C)u\,d\sigma.
\]

The fixed Schwarzschild mass is $C/[2n(n-1)]$. If
$u=1+b_{\infty}r^{2-n}+O(r^{1-n})$, the conformal mass formula and the
flux expansion give
\[
 m_\gamma=\frac{C}{2n(n-1)}+2b_{\infty},\qquad
 \lim_{r\to\infty}\int_{S_r}\partial_{\nu_{g_N}}u\,dA_{g_N}
       =-(n-2)|\Sph^{n-1}| b_{\infty}.
\]
The asymptotic metric corrections do not affect the flux limit because
$du=O(r^{1-n})$. Substitution proves \eqref{eq:exactbudget}.

The hypotheses $R_g\ge C$ and $H_g\ge H_C$, together with $u>0$,
make the two subtracted terms nonnegative and give
$\int_MR_gu\ge CJ$. The stated inequalities now follow from
$m_\gamma\ge0$.
\end{proof}

\subsection{Preserving the boundary and removing the exterior factor}

The Dirichlet conformal factor is obtained from the coercive compact
Dirichlet problem
\[
 -\frac{4(n-1)}{n-2}\Delta_gu_D+R_gu_D=0
       \quad\hbox{in }M,\qquad u_D|_\Sigma=1,
\]
and we set
\[
 h:=u_D^{4/(n-2)}g,\qquad V:=u_D^{-1}.
\]
Lax--Milgram and elliptic regularity give a unique smooth solution.
Here $h$ is initially defined only on $M$. The maximum principle gives $0<u_D\le1$. At the boundary, $u_D=1$ is its maximum,
so $\partial_{\nu_g}u_D\ge0$. The conformal curvature formulas
give
\begin{equation}\label{eq:dirichletdata}
 \begin{gathered}
 R_h=0,\qquad h_\Sigma=\sigma,\qquad V\ge1,\qquad V|_\Sigma=1,\\
 H_h=H_g+\frac{2(n-1)}{n-2}\partial_{\nu_g}u_D\ge H_C>0,\\
 -\frac{4(n-1)}{n-2}\Delta_hV
      =R_gV^{(n+2)/(n-2)}\ge CV^{(n+2)/(n-2)}.
 \end{gathered}
\end{equation}
The last equation follows by applying the scalar curvature formula
to $g=V^{4/(n-2)}h$. Scalar flatness of $h$ does not yet imply
that $h$ is Euclidean.

The two conformal factors $u$ and $u_D$ solve the same linear equation on $M$ but have different normalizations: $u$ is normalized at infinity on $N$, whereas $u_D=1$ on $\Sigma$. In particular, $u|_\Sigma$ is not
prescribed. Their exact relation on the fill-in is
\[
 \gamma|_M=u^{4/(n-2)}g,\qquad
 h=u_D^{4/(n-2)}g
   =\left(\frac{u_D}{u}\right)^{4/(n-2)}\gamma|_M.
\]
Conformal covariance gives
\[
 L_\gamma\left(\frac{u_D}{u}\right)
     =u^{-(n+2)/(n-2)}L_g u_D=0.
\]
Since $R_\gamma=0$, the ratio $u_D/u$ is $\gamma$-harmonic on $M$,
with boundary value $1/u|_\Sigma$. Thus $h$ is obtained from $\gamma$
by a specific harmonic conformal change, not by an unrelated choice
of scalar flat representative. The next construction multiplies this
ratio by the constant $z$ from \eqref{eq:parameters}, so that it
extends to the exterior factor $1/w_{\infty}$ and tends to one at
infinity. This normalization accounts for the constant rescaling in
\eqref{eq:uv} and \eqref{eq:hgamma}.

Apply Lemma~\ref{lem:BYpositive} to $(M,h)$ to obtain
\begin{equation}\label{eq:BY}
 \int_\Sigma(n-1-H_h)\,d\sigma\ge0.
\end{equation}
This is an integral inequality; no pointwise upper bound for $H_h$
is being asserted. 
We now construct the different exterior that will be used in the coordinate argument.

Let $v$ solve $\Delta_\gamma v=0$ in $M$ with boundary value
$1/w_{\infty}$, and define $v:=1/w_{\infty}$ on the exterior. The function is positive
and continuous on $N$. It is harmonic for $\gamma$ on each side:
on the exterior this follows from conformal covariance and
$\Delta_\delta w_{\infty}=0$. Its normal derivatives are not required to
agree across $\Sigma$.

Inside $M$, conformal covariance shows that $uv$ solves the same
linear equation as $u_D$. Its boundary value is constant, since
\[
 uv|_\Sigma=\frac{u}{w_{\infty}}\bigg|_\Sigma
          =\frac1{F(r_{\mathrm S})}=z.
\]
Dirichlet uniqueness therefore gives $uv=zu_D$. Consequently
\begin{equation}\label{eq:uv}
 uv=zu_D,\qquad
 v^{4/(n-2)}\gamma
 =\begin{cases}
      r_{\mathrm S}^2h,&\text{on }M,\\
      \delta,&\text{on the exterior}.
   \end{cases}
\end{equation}
The interior scaling is correct because
$z^{4/(n-2)}=r_{\mathrm S}^2$. On the exterior, the factors
$v^{4/(n-2)}$ and $w_{\infty}^{4/(n-2)}$ cancel exactly.

Since $w_{\infty}\le F$ and $F(r)\le F(r_{\mathrm S})=z^{-1}$ for
$r\ge r_{\mathrm S}$, one has $v\ge z$ on the exterior.
The minimum principle gives the same bound in $M$. Define the
Dirichlet energy of the conformal factor $v$ by
\[
 P:=\int_N|dv|_\gamma^2\,dV_\gamma.
\]
In particular, $0<v^{-1}\le z^{-1}$ and $v^{-1}=w_{\infty}$ on the exterior. The energy $P$
is finite: the compact pieces are smooth up to $\Sigma$, and
$dv=O(r^{1-n})$ on the end.

Extend the notation $h$ to all of $N$ by setting
\begin{equation}\label{eq:hgamma}
 h:=r_{\mathrm S}^{-2}v^{4/(n-2)}\gamma.
\end{equation}
This agrees with the already defined $h$ on $M$. Outside $M$ it
is $r_{\mathrm S}^{-2}\delta_x$, hence exactly the Euclidean
metric $\delta_y$ in the coordinates $y:=x/r_{\mathrm S}$.
The identified boundary is now $|y|=1$. The continuous metric $h$
need not have nonnegative distributional scalar curvature across
$\Sigma$; positive mass and minimal graphs will be applied to
$\gamma$ and its smooth approximations, not to this extension of
$h$.

The inverse factor is harmonic for $h$ on $M$: conformal covariance
gives $L_{v^{4/(n-2)}\gamma}(v^{-1})=v^{-(n+2)/(n-2)}L_\gamma1=0$, and
constant rescaling preserves harmonicity.

\subsection{The energy identity}

The mean curvature jump of the extension of $h$ records the energy of
$v$. The Brown--York sign in \eqref{eq:BY} therefore converts the
identity below into a mass controlled energy estimate. The two boundary
estimates for $v^{-1}$ will later control its sign-changing deviation
from one.

\begin{lemma}\label{lem:energy}
The following identities and estimates hold:
\begin{align}
 z^2\int_\Sigma(n-1-H_h)\,d\sigma
    &=(n-1)|\Sph^{n-1}| m_\gamma-\frac{2(n-1)}{n-2}P,
                                      \label{eq:energyidentity}\\
 P&\le\frac{(n-2)|\Sph^{n-1}|}{2} m_\gamma,
                                      \label{eq:energybound}\\
 \int_N|d(v^{-1})|_h^2\,dV_h&\le z^{-4}P,\qquad
 \int_\Sigma(v^{-1}-1)^2\,d\sigma
       \le\frac{|\Sph^{n-1}| m_\gamma}{2z^4},
                                      \label{eq:inverse-v-energy}\\
 \int_\Sigma(v^{-1}-1)\,d\sigma
    &=\frac{|\Sph^{n-1}| m_\gamma}{2z^2}\ge0,
                                      \label{eq:inverse-v-mean}\\
 J&=z\int_Mv^{-1}V^{(n+2)/(n-2)}\,dV_h.
                                      \label{eq:Jweighted}
\end{align}
\end{lemma}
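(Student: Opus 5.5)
The plan is to prove the five statements in turn, starting with the identity \eqref{eq:energyidentity}. It comes from integrating $\operatorname{div}_\gamma(v\,dv)=|dv|_\gamma^2$ separately over $M$ and over the truncated exterior $\{r_{\mathrm S}\le r\le r\}$. This uses only the facts established before the lemma: $\Delta_\gamma v=0$ on each side, continuity of $v$ across $\Sigma$, and $v=1/w_\infty$ on the exterior. With $\nu$ pointing from $M$ to the exterior, this gives
\[
 P=\lim_{r\to\infty}\int_{S_r}v\,\partial_{\nu}v\,dA_\gamma
   -\int_\Sigma v\,(\partial_{\nu,+}v-\partial_{\nu,-}v)\,dA_\gamma .
\]
The flux at infinity follows from \eqref{eq:wexpansion}. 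There $v=1-\tfrac{m_\gamma}{2}r^{2-n}+O(r^{1-n})$, and $\gamma=\delta+O(r^{2-n})$ corrections do not contribute to the limit. Hence the limit equals $\tfrac{(n-2)}{2}|\Sph^{n-1}|m_\gamma$.

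For the interface term, I use the conformal mean curvature formula for $h=r_{\mathrm S}^{-2}v^{4/(n-2)}\gamma$, computed from each side:
\[
 H_{h,\pm}=r_{\mathrm S}\,v^{-2/(n-2)}\Big(H_{\gamma,\pm}+\tfrac{2(n-1)}{n-2}\,v^{-1}\partial_{\nu,\pm}v\Big).
\]
Lemma~\ref{lem:globalflatten} says $H_{\gamma,+}=H_{\gamma,-}$, so the jump $H_{h,+}-H_{h,-}$ is a multiple of $\partial_{\nu,+}v-\partial_{\nu,-}v$. On the exterior side $h$ is Euclidean and $\Sigma=\{|y|=1\}$, so $H_{h,+}=n-1$. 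Meanwhile $H_{h,-}=H_h$ is the mean curvature of the fill-in. Next convert $dA_\gamma$ to $d\sigma$. On $\Sigma$ we have $\gamma=w_\infty^{4/(n-2)}\delta$ at $r=r_{\mathrm S}$, $v=1/w_\infty$ there, and $r_{\mathrm S}^{n-2}=z^2$. Collecting powers of $v$ and $r_{\mathrm S}$ should give exactly
\[
 \int_\Sigma v(\partial_{\nu,+}v-\partial_{\nu,-}v)\,dA_\gamma=\tfrac{n-2}{2(n-1)}z^2\int_\Sigma(n-1-H_h)\,d\sigma.
\]
Multiplying by $\tfrac{2(n-1)}{n-2}$ then yields \eqref{eq:energyidentity}. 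This bookkeeping of conformal weights and signs is the main obstacle. To be safe I would check it on the model case, where $P=0$ and $m_\gamma=0$ force $H_h=n-1$. Once the identity holds, \eqref{eq:energybound} follows immediately from the Brown--York sign \eqref{eq:BY}.

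For \eqref{eq:inverse-v-energy} I use conformal invariance of the Dirichlet integrand, which in dimension $n$ reads
\[
 |d\phi|_h^2\,dV_h=r_{\mathrm S}^{2-n}v^2|d\phi|_\gamma^2\,dV_\gamma .
\]
Take $\phi=v^{-1}$, so that $|d\phi|_\gamma^2=v^{-4}|dv|_\gamma^2$. Then $v\ge z$ and $r_{\mathrm S}^{n-2}=z^2$ give the bound $z^{-4}P$. For the boundary $L^2$ bound, note that $f:=v^{-1}-1=w_\infty-1$ is Euclidean harmonic outside the unit ball in the $y$ coordinates and decays like $|y|^{2-n}$. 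I expand $f$ in spherical harmonics: the mode of degree $l$ has exterior energy $(l+n-2)$ times its boundary $L^2$ norm. This gives $\int_{|y|=1}f^2\,d\sigma\le\tfrac1{n-2}\int_{|y|>1}|df|^2$, which is at most $z^{-4}P/(n-2)$. Combining with \eqref{eq:energybound} gives $|\Sph^{n-1}|m_\gamma/(2z^4)$.

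For \eqref{eq:inverse-v-mean} I use that the spherical mean of the exterior harmonic function $w_\infty-1$ on $\{|x|=r\}$ has the form $c\,r^{2-n}$. Its asymptotics identify $c=m_\gamma/2$. Evaluating at $r=r_{\mathrm S}$, where $r_{\mathrm S}^{n-2}=z^2$, and noting that $d\sigma$ is the round measure on $|y|=1$, gives the formula. Finally, \eqref{eq:Jweighted} is pointwise algebra. From \eqref{eq:uv} we have $u=zu_D/v=zV^{-1}v^{-1}$, and $dV_g=u_D^{-2n/(n-2)}dV_h=V^{2n/(n-2)}dV_h$. Therefore $u\,dV_g=z\,v^{-1}V^{(n+2)/(n-2)}dV_h$.
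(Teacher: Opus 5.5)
Your proposal is correct and follows the paper's proof essentially step by step. It uses the same ingredients: the two-sided integration of $v\Delta_\gamma v=0$ with flux $\tfrac{n-2}{2}|\Sph^{n-1}|m_\gamma$ at infinity, and the same conformal mean curvature and area conversion (your weights do combine to $z^{-2}\tfrac{2(n-1)}{n-2}\,v(\partial_{\nu,+}v-\partial_{\nu,-}v)\,dA_\gamma$). It then applies the Brown--York sign, the conformal scaling of the Dirichlet integrand, the radial spherical mean, and the pointwise identity $u\,dV_g=zv^{-1}V^{(n+2)/(n-2)}dV_h$. The only variation is the boundary $L^2$ bound: you decompose the exterior harmonic function $w_\infty-1$ into spherical harmonics, whereas the paper applies radial Cauchy--Schwarz along rays. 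Both give the same constant $1/(n-2)$, sharp for the degree-zero mode, and the paper's version does not use harmonicity.
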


\begin{proof}
First compare the two mean curvatures of the continuous extension
$h=r_{\mathrm S}^{-2}v^{4/(n-2)}\gamma$. Both one-sided
$\gamma$-normals point from $M$ into the exterior. The mean curvatures
of $\gamma$ agree, while those of $h$ are $H_h$ on the compact side
and $n-1$ on the exterior. The conformal mean curvature and area
formulas consequently yield
\[
 z^2\int_\Sigma(n-1-H_h)\,d\sigma
 =\frac{2(n-1)}{n-2}\int_\Sigma
      v(\partial_{\nu,+}v-\partial_{\nu,-}v)\,dA_\gamma.
\]
The scale factor is $z^2=r_{\mathrm S}^{n-2}$: mean curvature
scales by $r_{\mathrm S}$ and area by $r_{\mathrm S}^{1-n}$.
The remaining conformal powers combine to the factor $v$.

Integration of $v\Delta_\gamma v=0$ over the two pieces yields
\[
 P=\lim_{r\to\infty}
        \int_{S_r}v\partial_{\nu_\gamma}v\,dA_\gamma
      -\int_\Sigma v(\partial_{\nu,+}v-\partial_{\nu,-}v)
                                                       \,dA_\gamma.
\]
The sign follows from the inward-pointing outward normal of the
exterior region at its inner boundary. Since $v=w_{\infty}^{-1}$,
\eqref{eq:wexpansion} gives
\[
 v=1-\frac{m_\gamma}{2r^{n-2}}+O(r^{1-n}),\qquad
 \lim_{r\to\infty}
      \int_{S_r}v\partial_{\nu_\gamma}v\,dA_\gamma
         =\frac{(n-2)|\Sph^{n-1}| m_\gamma}{2}.
\]
Substitution proves \eqref{eq:energyidentity}. The nonnegativity
of its left side, by \eqref{eq:BY}, proves
\eqref{eq:energybound}.

For the first estimate in \eqref{eq:inverse-v-energy}, the conformal and
constant scalings in \eqref{eq:hgamma} give
\[
 \int_N|d(v^{-1})|_h^2\,dV_h
   =r_{\mathrm S}^{2-n}\int_Nv^2|d(v^{-1})|_\gamma^2\,dV_\gamma
   =z^{-2}\int_Nv^{-2}|dv|_\gamma^2\,dV_\gamma
   \le z^{-4}P.
\]
Here $d(v^{-1})=-v^{-2}dv$ and $v^{-1}\le z^{-1}$.

On the exterior, cancellation of the conformal powers gives
\[
 \int_{r\ge r_{\mathrm S}}|dv|_\gamma^2\,dV_\gamma
       =\int_{r\ge r_{\mathrm S}}|D\log w_{\infty}|^2\,dx.
\]
Using $w_{\infty}\le z^{-1}$, we obtain
$\int_{r\ge r_{\mathrm S}}|Dw_{\infty}|^2\,dx\le z^{-2}P$.
For each $\theta\in\Sph^{n-1}$, radial integration and
Cauchy--Schwarz give
\begin{align*}
 |w_{\infty}(r_{\mathrm S}\theta)-1|^2
 &\le\left(\int_{r_{\mathrm S}}^\infty r^{1-n}\,dr\right)
       \left(\int_{r_{\mathrm S}}^\infty
                       |\partial_rw_{\infty}(r\theta)|^2r^{n-1}\,dr\right)\\
 &=\frac{r_{\mathrm S}^{2-n}}{n-2}
       \int_{r_{\mathrm S}}^\infty
                       |\partial_rw_{\infty}(r\theta)|^2r^{n-1}\,dr.
\end{align*}
The prescribed boundary measure is $d\sigma(\theta)$, not the
Euclidean area measure of the coordinate sphere of radius
$r_{\mathrm S}$. Integrating in $\theta$ and using
$r_{\mathrm S}^{2-n}=z^{-2}$ yields
\[
 \int_\Sigma(v^{-1}-1)^2\,d\sigma
       \le\frac{P}{(n-2)z^4}
       \le\frac{|\Sph^{n-1}| m_\gamma}{2z^4},
\]
which proves the second estimate in \eqref{eq:inverse-v-energy}.

The spherical average of the exterior harmonic function $w_{\infty}$
satisfies the radial harmonic equation and has limit one at infinity.
Its coefficient of $r^{2-n}$ is fixed by \eqref{eq:wexpansion}; hence
\[
 \frac{1}{|\Sph^{n-1}|}\int_{\Sph^{n-1}}w_{\infty}(r\theta)\,d\sigma
       =1+\frac{m_\gamma}{2r^{n-2}}
       \qquad(r\ge r_{\mathrm S}).
\]
Evaluating at $r=r_{\mathrm S}$, where $v^{-1}=w_{\infty}$, proves
\eqref{eq:inverse-v-mean}. This nonnegative boundary average is stronger
than merely an $L^2$ bound and will be used in the weighted
comparison.

Finally, \eqref{eq:uv} gives $u=zv^{-1}u_D=zv^{-1}/V$, whereas
$g=V^{4/(n-2)}h$ gives
$dV_g=V^{2n/(n-2)}dV_h$. Therefore
\[
 J=\int_Mu\,dV_g
   =z\int_Mv^{-1}V^{2n/(n-2)-1}\,dV_h
   =z\int_Mv^{-1}V^{(n+2)/(n-2)}\,dV_h,
\]
which is \eqref{eq:Jweighted}.
\end{proof}

Thus, by \eqref{eq:energybound}, the conformal factor \(v/z\) relating the mass metric  \(\gamma\) to the boundary preserving metric \(h\) has Dirichlet energy, measured with respect to \(\gamma\), bounded by \(m_\gamma\). We use it first to locate the boundary in quantitative coordinates and then to estimate the weighted
integral in \eqref{eq:Jweighted}.

\section{Coordinates and the prescribed boundary}\label{sec:geometry}

The graph construction gives a map with small metric distortion on a
large sublevel set. To use the prescribed boundary, three additional
facts are needed: a single inverse branch containing the end, a
parametrization of the image of the round sphere, and identification of
the target interior with points of the fill-in. The last fact
does not follow merely from locating the boundary. We prove these
statements after removing sets of small capacity; their topology and
boundary regularity will play no role.

Throughout this section, $h=r_{\mathrm S}^{-2}v^{4/(n-2)}\gamma$
is exactly Euclidean on the attached exterior
$X=\{|y|\ge1\}$, where $y:=x/r_{\mathrm S}$ and $x$ is the
asymptotic coordinate used above. Three spaces must be distinguished:
the manifold $N=M\cup_\Sigma X$; a reference copy of $\R^n$ in which
$X$ is the complement of the unit ball $B$; and the target copy of
$\R^n$, with coordinate $\xi=\bar\Phi(a)$ for $a\in N$.
Only the exteriors of the first two spaces are identified. In
particular, the reference ball $B$ is not a coordinate model for $M$.
The map $\bar\Phi$ is defined globally, but its inverse will be used
only on a specified open set. The global Euclidean map $F_0$ constructed
below extends an exterior displacement into $B$; it does not extend a
coordinate chart through the fill-in.

For a compact set $\mathcal K\subset\R^n$, write
\[
 \Cap(\mathcal K):=\inf\left\{\int_{\R^n}|Da|^2:
      a\in C_c^\infty(\R^n),\ a\ge1\text{ near }\mathcal K\right\}.
\]
This is the unnormalized Newtonian capacity. The capacity $\Cap_h$ on $(N,h)$ is defined
by the same formula, using compactly supported functions on $N$ and the Dirichlet energy
of $h$.

The main conclusion of the section is the following proposition.
It records the domain, the boundary parametrization, and all the estimates
needed in the comparison argument.

\begin{proposition}\label{prop:marked}
Fix a sufficiently small dimensional amplitude $\tau>0$.
For all sufficiently small $C$ and arbitrarily small
$\varepsilon>0$, there are a radial domain
\[
 D_f:=\{r\theta:0\le r<1+f(\theta)\},
 \qquad f\in W^{1,\infty}(\Sph^{n-1}),
\]
a compact set $A\subset\overline{D_f}$, and an
orientation-preserving coordinate map $\bar{\Phi}$ from an open subset of
$\operatorname{int}M$ onto $O:=D_f\setminus A$.
In these coordinates, write
$q:=(\bar\Phi^{-1})^*h-\delta$; thus $R_{\delta+q}=0$ on $O$.
There are a symmetric tensor
$q_{\mathrm{ref}}\in H^1(D_f)$, equal to $q$ on $O$, and a global bi-Lipschitz
map $F_0:\R^n\to\R^n$ with $F_0(B)=D_f$, such that
\[
 \max\{\|q\|_\infty,\|q_{\mathrm{ref}}\|_\infty,
                   \|f\|_{W^{1,\infty}}\}\le\tau,
 \qquad \operatorname{Lip}(F_0),\operatorname{Lip}(F_0^{-1})\le2.
\]
Let $\Gamma:=\partial D_f\setminus A$ be the part of the outer boundary outside $A$.
Under the prescribed identification
$\Sigma=\Sph^{n-1}$, the inverse coordinate map extends to
$\Gamma$ and agrees there with $F_0^{-1}$. Thus $F_0^{-1}(\Gamma)\subset\Sigma$ is the
corresponding part of the original boundary.

For $\mathscr E:=K(n)(m_\gamma+\varepsilon)$, where the dimensional constant
is chosen after fixing $\tau$, the following quantities have sum
at most $\mathscr E$:
\begin{align*}
 &\|q\|_{H^1(O)}^2+\|q_{\mathrm{ref}}\|_{H^1(D_f)}^2
   +\|f\|_{H^1(\Sph^{n-1})}^2
   +\|F_0-\operatorname{Id}\|_{W^{1,2}(\Sph^{n-1})}^2,\\
 &\Cap(A)+|A|
   +|\Sigma\setminus F_0^{-1}(\Gamma)|_\sigma
   +|\partial D_f\setminus\Gamma|_\delta,\\
 &\|v^{-1}-1\|_{H^1(O)}^2
   +\|v^{-1}-1\|_{L^2(\Gamma)}^2.
\end{align*}
In the last line, $v^{-1}-1$ is written in the coordinates $\bar{\Phi}$.
In particular, the area $\operatorname{Area}_h(\Gamma)$ differs from
$|\Sph^{n-1}|$ by at most $K\mathscr E$. More precisely,
\[
 |A|\le K\mathscr E^{n/(n-2)},\qquad
 |\Sigma\setminus F_0^{-1}(\Gamma)|_\sigma
   +|\partial D_f\setminus\Gamma|_\delta
       \le K\mathscr E^{(n-1)/(n-2)}.
\]
The same bound holds for $\bigl|\operatorname{Area}_h(\Gamma)-|\Sph^{n-1}|\bigr|$. Here the
subscripts $\sigma$ and $\delta$ indicate boundary area in the
round source sphere and in Euclidean target coordinates, respectively.
\end{proposition}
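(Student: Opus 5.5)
We build the chart from the Wang--Zhang graph map for the smoothed metric $\gamma_\varepsilon$ of Lemma~\ref{lem:corner}. Then we transfer everything to $h$ through the conformal factor $v$, whose energy is controlled by \eqref{eq:energybound}.

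\textbf{Step 1: the graph map and its Hessian defect.} Apply \cite[Theorem~3.1]{WZ} to $g'=\gamma_\varepsilon$ to get $\Phi=(\Phi_1,\dots,\Phi_n)$. Normalize it by the exterior scaling $y=x/r_{\mathrm S}$, giving $\bar\Phi=\Phi/r_{\mathrm S}$. The energy estimates cited as \eqref{eq:marked-WZ-energy} bound the coordinate defect $\int(|\Hess\bar\Phi|^2+\ldots)$ by $K\,m_{\gamma_\varepsilon}$. By Lemma~\ref{lem:corner} and the connection identity following it, this becomes $K(m_\gamma+\varepsilon)$ for $\gamma$ itself. Since $h=r_{\mathrm S}^{-2}v^{4/(n-2)}\gamma$ and $\int|dv|^2_\gamma\le Km_\gamma$, the defect of $\bar\Phi$ measured in $h$ picks up only the term $d\log v\ast d\bar\Phi$. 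So $\|(\bar\Phi^{-1})^*h-\delta\|$ is controlled in $H^1$ by $\mathscr E$ away from the set where $|d\bar\Phi|$ degenerates.

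\textbf{Step 2: the bad set $A$ and the inverse branch.} Consider the pointwise defect $\mathcal Q$ (eigenvalues of $d\bar\Phi^*d\bar\Phi$ versus $h$, plus $|v^{-1}-1|$). Let the bad region be $\{\mathcal Q>\tau\}$, enlarged slightly. A standard truncation of $\min(1,\mathcal Q/\tau)$ gives a capacitary test function, so the bad region's image has capacity at most $K\tau^{-2}\mathscr E$. The isocapacitary and Sobolev inequalities then give $|A|\le K\mathscr E^{n/(n-2)}$. On the complement, $\bar\Phi$ is a local $2$-bi-Lipschitz diffeomorphism. A degree argument on the component containing the end shows injectivity: the map is proper, equals $y+o(|y|)$ at infinity, and has degree one. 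This yields a single inverse branch on $O\cup\bar\Phi(X)$. On $X$, $h=\delta$ and $\bar\Phi$ is nearly affine. Subtracting the best affine (rigid) map via Hessian control and the Poincaré inequality, we may assume $\bar\Phi=\operatorname{Id}+$ small on $X$ in $W^{1,2}$ and $L^\infty$ (De Giorgi/harmonic estimates on the exterior, where the $\Phi_i$ are harmonic for the flat metric).

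\textbf{Step 3: the boundary.} Define $f$ by writing $\bar\Phi(\Sph^{n-1})$ radially over $\Sph^{n-1}$. Let $F_0$ be a bi-Lipschitz extension into $B$ of $\bar\Phi|_X$, for instance a radial interpolation $F_0(r\theta)=r(1+f(\theta))\theta$ corrected tangentially. The $W^{1,\infty}$ bound on $f$ comes from the smallness of $\bar\Phi-\operatorname{Id}$ on the good part. On the bad boundary portion, replace it by a Lipschitz truncation (McShane), whose measure is $K\mathscr E^{(n-1)/(n-2)}$ by trace capacity. Trace inequalities give the $H^1(\Sph^{n-1})$ bounds on $f$ and on $F_0-\operatorname{Id}$. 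Set $q_{\mathrm{ref}}$ to be an $H^1$ extension of $q$ across $A$, obtained by harmonic or capacitary cutoff, with $L^\infty$ norm below $\tau$. The $v^{-1}-1$ bounds on $\Gamma$ follow from \eqref{eq:inverse-v-energy}.

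\textbf{Main obstacle.} The delicate point is identifying $O$ with a subset of $\operatorname{int}M$. One must show that the target points in $D_f\setminus A$ pulled back by the good branch lie in $M$, rather than in $X$ or in another sheet. This requires the degree argument to separate the image of $\Sigma$ from $O$. We plan to argue by connectedness of $D_f\setminus A$ modulo small capacity. Removing a set of small capacity cannot disconnect a domain of definite size except for a small-capacity piece, which we add to $A$. Combined with the orientation of $\bar\Phi$ and the fact that $\bar\Phi(X)$ is exactly $\R^n\setminus D_f$ near $\Gamma$, this should force the preimage into $M$.
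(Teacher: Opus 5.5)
Your Steps 1--2 follow the paper's Lemmas~\ref{lem:marked-joint}--\ref{lem:marked-sheet} in outline. The two steps that carry the real content, identifying the interior and constructing $F_0$, are not proved, and one supporting claim is false.

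\textbf{The interior identification.} You correctly flag this as the main obstacle, but connectedness does not resolve it. On $X\setminus A_X$ one has $\bar\Phi=F_0$, and $F_0$ sends $X$ outside $D_f$. The selected branch can still send pieces of the bad exterior set $E\cap X\cap A_X$ into $D_f$. Inside $D_f$, apart from $K_0$, such an exterior bubble is separated from the part coming from $\operatorname{int}M$ only by $\bar\Phi(E\cap\Sigma\cap A_X)$, where $\bar\Phi\neq F_0$.

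Your argument needs this image to lie in a set of small \emph{target capacity}, and nothing you establish gives that. The image has small area, but small area does not bound capacity. Pushing a capacity cutoff of $A_X$ forward through the branch would require extending it across $\Sigma$ into the uncontrolled $M$ side. Likewise, ``$\bar\Phi(X)=\R^n\setminus D_f$ near $\Gamma$'' holds only on the good part of $X$.

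The paper uses a different idea here: an outward ray argument.
\begin{itemize}
\item It forms the target potential $P_T=I_1(\mathcal F+|Da_W|)$, where $\mathcal F$ is the target derivative of the exterior coframe $D_y\bar\Phi$.
\item From any bubble point with $P_T<\delta_T$, it selects a ray along which the coframe barely changes.
\item The lift of this ray stays in $X$ with $d|y|/ds\ge1/8$, so it can never return to $\Sigma$.
\item The ray must nevertheless leave $D_f$ through a good point $F_0(\theta)=\bar\Phi(\theta)$ with $\theta\in\Sigma$, which contradicts uniqueness of the inverse.
\end{itemize}
This yields \eqref{eq:marked-ray-exclusion}. The potential capacity estimate then converts it into $\Cap(L)\le K\mathscr E$.

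\textbf{The construction of $F_0$.} This rests on a false premise: the $\Phi_i$ are not harmonic for $\delta$ on $X$. They solve the nonlinear minimal graph equation for $\gamma_\varepsilon=(w_\infty b_\varepsilon)^{4/(n-2)}\delta$. Hence no De Giorgi or harmonic estimate gives uniform smallness of $\bar\Phi-\operatorname{Id}$ or $D_y\bar\Phi-I$ on all of $X$. You have only the $L^2$ Hessian bound on $\{S<6\beta\}$, and $X$ may contain bad regions near $\Sigma$.

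To obtain a global map with $\operatorname{Lip}(F_0^{\pm1})\le2$ that agrees with $\bar\Phi$ off a small-capacity set, you need a \emph{pairwise} bound $\operatorname{Lip}(\bar\Phi-\operatorname{Id})\le K\eta<1/100$ on $X\setminus A_X$. Pointwise derivative bounds at good points are not enough, because segments between good points can cross the bad set. The paper gets the pairwise bound from the Riesz potential of $|Da_X|$: broken segments avoiding $K_X$ for nearby pairs, and a chain argument for distant pairs. It then applies Kirszbraun. Your radial interpolation with tangential correction, and the McShane truncation, both presuppose exactly this bound.

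\textbf{The capacity claim in Step 2.} The assertion that the image of the bad region has small capacity is wrong: $\bar\Phi$ is not injective, and that image can be large. What is controlled is the image of the frontier band of the good component. The paper controls it through the fibrewise maximum $b_*$, which handles multiplicity, and this is what makes the degree argument work.

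\textbf{A smaller omission.} The $L^2(O)$ part of the bound on $v^{-1}-1$ does not follow from \eqref{eq:inverse-v-energy}. The paper obtains it from a trace--Poincar\'e inequality on $D_f$, pulled back through $F_0$, since no Poincar\'e inequality is available on $M$.
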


We prove the proposition through four lemmas. They provide the local
coordinate estimates, injectivity on an open set containing the end,
the boundary parametrization, and the identification of the part of $M$
corresponding to the interior of $D_f$. The proof of the proposition then assembles these results
and extends the metric perturbation across the omitted set.

\begin{figure}[tbp]
\centering
\resizebox{\textwidth}{!}{\begin{tikzpicture}[x=1cm,y=1cm,>=Latex,font=\small]
\definecolor{selgreen}{RGB}{34,125,101}
\definecolor{badred}{RGB}{174,57,56}
\definecolor{refblue}{RGB}{47,87,140}
\definecolor{holepurple}{RGB}{125,75,143}
\node[anchor=west,font=\bfseries] at (-2.1,2.7) {Glued manifold};
\node[anchor=west,font=\bfseries] at (3.25,2.7) {Reference space};
\node[anchor=west,font=\bfseries] at (8.6,2.7) {Target space};
\begin{scope}
\fill[selgreen!10] (-2.1,-2.0) rectangle (2.1,2.1);
\draw[fill=white,thick] (0,0) circle (1.13);
\fill[selgreen!12] (-.96,.5) .. controls (-.25,.9) and (.6,.95) .. (.98,.52) -- (1.1,.8) arc (36:155:1.36) -- cycle;
\draw[selgreen,dashed,thick] (-.96,.5) .. controls (-.25,.9) and (.6,.95) .. (.98,.52);
\draw[fill=badred!30,draw=badred] (1.45,.45) ellipse (.27 and .47);
\draw[fill=badred!30,draw=badred] (-.6,.02) ellipse (.22 and .17);
\node at (0,-.48) {$(M,h)$};
\node[selgreen] at (-1.45,1.65) {$E$};
\node at (1.6,-1.55) {$X$};
\node[anchor=east] at (-1.3,-1.3) {$\Sigma$};
\draw (-1.3,-1.16)--(-.76,-.84);
\node[align=center,text width=4.2cm] at (0,-2.7) {$\bar\Phi:N\to\mathbb R^n$ is global.\\Its selected inverse is defined only on $Y$.};
\end{scope}
\begin{scope}[xshift=5.35cm]
\fill[refblue!5] (-2.1,-2.0) rectangle (2.1,2.1);
\draw[fill=white,thick] (0,0) circle (1.13);
\fill[orange!25] (22:1.13) arc (22:67:1.13) -- (67:1.83) arc (67:22:1.83) -- cycle;
\draw[orange!70!black] (22:1.13) arc (22:67:1.13) -- (67:1.83) arc (67:22:1.83) -- cycle;
\fill[badred!65] (33:1.13) arc (33:54:1.13) -- (54:1.51) arc (54:33:1.51) -- cycle;
\node at (-.15,-.1) {$B$};
\node[refblue] at (-1.3,1.5) {$X$};
\node[anchor=west] at (1.35,1.7) {$A_X$};
\draw (1.38,1.56)--(1.14,1.24);
\node[anchor=west] at (1.3,.5) {$K_X$};
\draw (1.3,.51)--(1.02,.94);
\node[anchor=east] at (-1.2,-1.2) {$\Sph^{n-1}$};
\draw (-1.25,-1.1)--(-.77,-.83);
\node[align=center,text width=4.3cm] at (0,-2.7) {$K_X\subset A_X\subset X$.\\Reflection extends $a_X$ into $B$, not into $M$.};
\end{scope}
\begin{scope}[xshift=10.7cm]
\fill[refblue!5] (-2.1,-2.0) rectangle (2.1,2.1);
\path[draw=refblue,thick,fill=white] plot[smooth cycle] coordinates {(-1.15,0) (-.7,-.94) (.3,-1.08) (1.19,-.43) (1.13,.55) (.4,1.22) (-.7,.98)};
\fill[holepurple!45] (-.35,.2) ellipse (.23 and .18);
\draw[holepurple,dashed,thick] (-.35,.2) ellipse (.37 and .31);
\fill[orange!25] (1.0,.4) .. controls (1.45,.4) and (1.7,1.0) .. (1.2,1.6) -- (.8,1.2) .. controls (.65,.8) and (.7,.6) .. (1.0,.4);
\draw[orange!70!black] (1.0,.4) .. controls (1.45,.4) and (1.7,1.0) .. (1.2,1.6) -- (.8,1.2) .. controls (.65,.8) and (.7,.6) .. (1.0,.4);
\draw[holepurple,dashed,thick] (.4,.35) .. controls (.7,.15) and (1.48,.17) .. (1.69,.76) .. controls (1.98,1.55) and (1.5,1.9) .. (.9,1.77) .. controls (.7,1.22) and (.34,.77) .. (.4,.35);
\node at (-.18,-.57) {$D_f\setminus L$};
\node[holepurple,anchor=east] at (-.8,.53) {$K_0$};
\draw[holepurple] (-.75,.4)--(-.55,.28);
\node[font=\footnotesize,anchor=west] at (.45,2.05) {$F_0(A_X)\subset L$};
\node[refblue,anchor=east] at (-1.1,-1.45) {$F_0(\Sph^{n-1})$};
\draw[refblue] (-.98,-1.33)--(-.7,-.94);
\node[align=center,text width=4.3cm] at (0,-2.7) {$W_0=K_0\cup F_0(A_X)$.\\The set $L$ also contains the exceptional target potential level.};
\end{scope}
\end{tikzpicture}}
\caption{Three spaces and the exceptional sets (schematic two-dimensional sections, not a claim about their topology). Green indicates part of the selected chart $E$, which can cross $\Sigma$. The drawing of $M$ carries no assertion that $M$ is a ball. The reference ball $B$ is an auxiliary Euclidean object. On the reference exterior only, $\bar\Phi=F_0$ outside $A_X$. In the target, $\bar\Phi:E\to Y=\mathbb R^n\setminus K_0$ is one-to-one, while $F_0$ is globally one-to-one. After the larger exception $L$ is removed, $D_f\setminus L$ is proved to be the image of points of the interior of $M$.}
\label{fig:sets-atlas}
\vspace{4mm}
\begin{minipage}{.95\textwidth}
\small
The identities that carry information between the panels are
\[
\begin{gathered}
 \bar\Phi|_{X\setminus A_X}=F_0|_{X\setminus A_X},\qquad D_f=F_0(B),\\
 (\bar\Phi|_E)^{-1}(D_f\setminus L)\subset\operatorname{int}M.
\end{gathered}
\]
There is no asserted equality $E=X\setminus \bar\Phi^{-1}(K_0)$, no asserted global inverse of $\bar\Phi$, and no identification $F_0|_{B}=\bar\Phi|_M$. The final hole $A$ is obtained from a capacity cutoff around $L$; the comparison domain is $O=D_f\setminus A$.
\end{minipage}
\end{figure}

\subsection{Estimates for the coordinate map}

We first specify the coordinate error in Wang--Zhang's construction.
Let $(N,g')$ be smooth, connected, complete, oriented and without boundary,
with exactly one end that is harmonically flat outside a compact set:
$g'=\phi^{4/(n-2)}\delta$, $\Delta_\delta\phi=0$, $\phi>0$, and $\phi\to1$.
Suppose $R_{g'}\ge0$ and write $m'$ for its mass. These are the
properties of the smooth repaired metrics used below. Fix the dimensional
slope $\lambda=\lambda_0(n)/2>0$ from \cite[Theorem~3.1]{WZ},
as specified in Section~\ref{sec:foundations}.
The normalized minimal graph map
$\Phi=(\Phi_1,\ldots,\Phi_n)$ is chosen so that
$\lambda\Phi_i$ is the minimal graph with asymptotic slope
$\lambda e_i$. These functions solve
\[
 \operatorname{div}_{g'}\left(
   \frac{\nabla_{g'}\Phi_i}
        {\sqrt{1+\lambda^2|d\Phi_i|_{g'}^2}}\right)=0.
\]
For each component, the conformally modified graph metric and its
scalar potential are
\[
 \widetilde g_i
   :=(1+\lambda^2|d\Phi_i|_{g'}^2)^{-1/(n-1)}
       (g'+\lambda^2d\Phi_i\otimes d\Phi_i),
\]
\[
 -\frac{4(n-1)}{n-2}\Delta_{\widetilde g_i}\varphi_i
       +R_{\widetilde g_i}\varphi_i=0,
 \qquad \varphi_i\longrightarrow1\text{ on the end}.
\]
These potentials satisfy $0<\varphi_i\le1$ by
\cite[Lemma~6.1]{WZ}. They are auxiliary functions associated with
the modified graph metrics, distinct from the exterior harmonic
function $w_{\infty}=Fu$ used earlier. Define the nonnegative function $Q$ by
\begin{equation}\label{eq:marked-Q-definition}
 Q:=\sum_{i,j=1}^n
       \bigl(\langle d\Phi_i,d\Phi_j\rangle_{g'}-\delta_{ij}\bigr)^2
       +\sum_{i=1}^n(1-\varphi_i^2).
\end{equation}
Writing $\mathsf a_{ij}:=\langle d\Phi_i,d\Phi_j\rangle_{g'}$
and letting $I$ denote the identity matrix,
\cite[Theorem~3.1, Proposition~7.1 and equation~(7.7)]{WZ} gives
the estimates
\begin{gather}
 \Phi(x)=x+o(|x|),\qquad D_x\Phi=I+o(1),\qquad Q\longrightarrow0,
 \qquad Q\ge|\mathsf a-I|^2,\label{eq:marked-WZ-map}\\
 \int_{\{Q<6\beta\}}\sum_i|\Hess_{g'}\Phi_i|^2\dd V_{g'}\le Km',
 \qquad
 \int_{\{Q<6\beta\}}|dQ|_{g'}^2\dd V_{g'}\le K\beta m'.
                                      \label{eq:marked-WZ-energy}
\end{gather}
Here $0<\beta<\beta_0(n)$, where $\beta_0$ is the dimensional
threshold in \cite[Proposition~7.1]{WZ}. The fixed slope has been
absorbed into $K$. The function $Q$ is exactly their combined defect
\cite[equation~(7.1)]{WZ}. Existence and global gradient bounds on
the compact core may depend on $g'$, but the cited local estimates
have dimensional constants.
Their Section~7 treats $m'\ge0$. At $m'=0$, the same bounds follow
from the nonnegative mass--energy identity
\cite[equation~(6.7)]{WZ}: the potential gradients and weighted scalar
terms vanish, and the scalar density identity
\cite[equation~(7.6)]{WZ} forces the local Hessian energy to vanish.
Differentiating the Gram matrix then gives $dQ=0$ on the sublevel set.
This deduction uses no division by the mass or mass rigidity theorem. The covariant Hessians and norms are
computed using $g'$, whereas $D_x\Phi$ denotes ordinary
differentiation in the asymptotic chart. The Gram estimate controls
metric distortion, and the Hessian estimate controls variation of
the differential on the same sublevel set. These are estimates from
the construction, rather than conclusions drawn from metric
convergence alone.
The map is proper: a sequence leaving every compact subset of the
one-ended manifold lies eventually in the asymptotic chart, where
$|\Phi(x)|\to\infty$ by \eqref{eq:marked-WZ-map}. Properness will
prevent inverse branches from disappearing inside the target.

Apply this input to a smooth approximation with nonnegative scalar curvature
$\gamma_\varepsilon$ of $\gamma$ supplied by
Lemma~\ref{lem:corner}. The end from Lemma~\ref{lem:globalflatten}
remains exactly harmonically flat under that repair, since
$\Delta_\delta(w_{\infty}b_\varepsilon)=0$ there. Thus it satisfies the
specialized graph hypotheses. Choose its mass error, uniform metric error, and squared $L^2$
connection error smaller than $\varepsilon$. If needed, decrease the
smoothing width further so that all fixed fill-in transfer errors below
are smaller than the same prescribed error.
Write $\Phi$ for its graph map and use
\[
 \bar{\Phi}:=\Phi/r_{\mathrm S}.
\]
The Gram matrix computed using $h$ is exactly
\[
 (\langle d\bar{\Phi}_i,d\bar{\Phi}_j\rangle_h)_{ij}
       =v^{-4/(n-2)}(\langle d\Phi_i,d\Phi_j\rangle_\gamma)_{ij}.
\]
This identity follows directly from conformal scaling of covector
inner products and the normalization $\bar\Phi=\Phi/r_{\mathrm S}$.
A small value of $Q$ alone is insufficient:
$Q$ measures the differential using $\gamma_\varepsilon$, whereas
the coordinates must be controlled for $h$. We therefore include
the conformal factor when choosing the set to remove. Smooth $v$ in a compact collar of
$\Sigma$, obtaining
$v_\varepsilon$ with uniform error tending to zero and
\[
 \int_N|dv_\varepsilon|_\gamma^2\dd V_\gamma\le P+\varepsilon.
\]
It is unchanged off the collar; no global $L^2$ bound for $v-1$ is
being asserted. Define the smooth function
\[
 S:=Q+(v_\varepsilon-1)^2.
\]
\begin{lemma}\label{lem:marked-joint}
For fixed sufficiently small $\beta$ and sufficiently accurate
approximations, $\bar{\Phi}$ is a local diffeomorphism on
$\{S<6\beta\}$. The singular values of $d\bar{\Phi}$, measured
from $(N,h)$ to Euclidean space, lie between two fixed positive
constants, and its Gram matrix differs from $I$ by
$K\sqrt\beta+o(1)$. The function $S$ satisfies
\begin{equation}\label{eq:marked-joint-energy}
 \begin{gathered}
 \int_{\{S<6\beta\}}\sum_i|\Hess_h\bar{\Phi}_i|^2\dd V_h\le K(m_\gamma+\varepsilon),
 \\
 \int_{\{S<6\beta\}}|dS|_h^2\dd V_h\le K\beta (m_\gamma+\varepsilon).
 \end{gathered}
\end{equation}
\end{lemma}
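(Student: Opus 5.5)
Three ingredients combine on the sublevel set $\{S<6\beta\}$: the Wang--Zhang estimates \eqref{eq:marked-WZ-energy} for $g'=\gamma_\varepsilon$, the conformal relation $h=r_{\mathrm S}^{-2}v^{4/(n-2)}\gamma$, and the connection convergence from Lemma~\ref{lem:corner}. Since $Q\ge0$ and $(v_\varepsilon-1)^2\ge0$, we have $\{S<6\beta\}\subset\{Q<6\beta\}$, and there also $|v_\varepsilon-1|<\sqrt{6\beta}$. Hence $|v-1|\le\sqrt{6\beta}+o(1)$ once the smoothing is fine enough. From \eqref{eq:marked-WZ-map}, $|\mathsf a-I|\le\sqrt Q<\sqrt{6\beta}$, where $\mathsf a$ is the Gram matrix of $d\Phi$ for $\gamma_\varepsilon$. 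Uniform closeness $\gamma_\varepsilon\to\gamma$ then gives the same bound for the $\gamma$-Gram matrix up to $o(1)$.

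By the displayed identity, the $h$-Gram matrix of $d\bar\Phi$ equals $v^{-4/(n-2)}$ times the $\gamma$-Gram matrix. Since $v^{-4/(n-2)}=1+O(\sqrt\beta)+o(1)$, the $h$-Gram matrix is $I+O(\sqrt\beta)+o(1)$. For $\beta$ small and the approximation accurate, its eigenvalues lie in $[1/2,2]$. So $d\bar\Phi$ is invertible, hence a local diffeomorphism, with singular values between fixed constants. This is the pointwise part, and it is routine.

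\textbf{Hessian estimate.} Write $\Hess_h\bar\Phi_i$ in three pieces. The conformal Hessian formula gives
\[
\Hess_h\bar\Phi_i=\Hess_\gamma\bar\Phi_i-\tfrac{2}{n-2}v^{-1}\bigl(dv\otimes d\bar\Phi_i+d\bar\Phi_i\otimes dv-\langle dv,d\bar\Phi_i\rangle_\gamma\gamma\bigr),
\]
and the displayed connection identity after Lemma~\ref{lem:corner} gives $\Hess_\gamma\bar\Phi_i=\Hess_{\gamma_\varepsilon}\bar\Phi_i+(\nabla^{\gamma_\varepsilon}-\nabla^\gamma)*d\bar\Phi$. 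On the sublevel set $|d\bar\Phi|$ is uniformly bounded and $v$ is comparable to $1$. The volume forms and norms of $h,\gamma,\gamma_\varepsilon$ are therefore uniformly comparable (note that $v\ge z$ and $z\to1$ as $C\to0$). Squaring and integrating bounds the first piece by $Km'$ via \eqref{eq:marked-WZ-energy}, with $m'=m_{\gamma_\varepsilon}\le m_\gamma+\varepsilon$. The second piece is at most $KP\le Km_\gamma$ by \eqref{eq:energybound}. The third is the squared $L^2$ connection error, which is less than $\varepsilon$ by the choice of approximation.

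\textbf{Energy of $S$.} We have $dS=dQ+2(v_\varepsilon-1)dv_\varepsilon$. The first term contributes at most $K\beta m'$ by \eqref{eq:marked-WZ-energy}, after converting norms from $\gamma_\varepsilon$ to $h$ with bounded factors. For the second, use $|v_\varepsilon-1|^2<6\beta$ on the set, together with $\int|dv_\varepsilon|^2_\gamma\le P+\varepsilon\le K(m_\gamma+\varepsilon)$. This gives $K\beta(m_\gamma+\varepsilon)$. Note that $|dQ|^2$ and $|dv_\varepsilon|^2$ carry conformal weight $v^{-4/(n-2)}$ and volume weight $v^{2n/(n-2)}$ when passing from $\gamma$ to $h$, and both weights are bounded on the set.

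\textbf{Main obstacle.} I expect the hardest step to be checking that every conversion constant is dimensional rather than fill-in dependent. This needs the pointwise bound on $v$ in terms of $\beta$, which is exactly why $(v_\varepsilon-1)^2$ was added to $Q$. It also needs the transfer errors (uniform metric error, $|v-v_\varepsilon|$, connection error) to be absorbed into $\varepsilon$ by choosing the smoothing width after the fill-in is fixed. A second point to verify is that $\gamma$ and $\gamma_\varepsilon$ differ only in a shrinking collar where $\gamma$ is merely Lipschitz. There the Hessian must be interpreted almost everywhere in the fixed differentiable structure, and the connection term handles exactly this.
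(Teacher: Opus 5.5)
Your proposal is correct and follows the paper's proof. Both use the Gram identity $\langle d\bar\Phi_i,d\bar\Phi_j\rangle_h=v^{-4/(n-2)}\langle d\Phi_i,d\Phi_j\rangle_\gamma$ with uniform metric approximation for the pointwise part. Both use the three-term bound $|\Hess_h\bar\Phi|\le K\bigl(|\Hess_{\gamma_\varepsilon}\Phi|+|\nabla^{\gamma_\varepsilon}-\nabla^\gamma|+|dv|_\gamma\bigr)$, controlled by \eqref{eq:marked-WZ-energy}, the $L^2$ connection convergence, and \eqref{eq:energybound}, together with $|d((v_\varepsilon-1)^2)|^2\le24\beta|dv_\varepsilon|^2$ for the energy of $S$; you simply write out the conformal Hessian formula and the weight conversions that the paper leaves implicit.
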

\begin{proof}
Both terms in $S$ are nonnegative and tend to zero at infinity, so
its small sublevel sets contain the end. On $\{S<6\beta\}$,
the Gram identity and uniform metric approximation control the
differential of $\bar{\Phi}$. The conformal connection formula,
\eqref{eq:marked-WZ-energy}, and \eqref{eq:energybound} give the
energy estimates. On this band $|d\Phi|$ is bounded and
\[
 |\Hess_h\bar{\Phi}|\le K\bigl(|\Hess_{\gamma_\varepsilon}\Phi|
       +|\nabla^{\gamma_\varepsilon}-\nabla^\gamma|
       +|dv|_\gamma\bigr).
\]
Moreover $|d((v_\varepsilon-1)^2)|^2\le24\beta|dv_\varepsilon|^2$,
and the scalar differential $dQ$ requires only uniform metric
comparison. The bounded scale factors involving $r_{\mathrm S}$ enter $K$.
Here $\Hess_h\bar{\Phi}$ is the covariant Hessian for the metric
$h$; the connection difference compares the two connections acting on
the same covector $d\Phi$. All norms in this comparison are uniformly
equivalent on the indicated sublevel set. The metric $h$ is
continuous across $\Sigma$, smooth on either side, and has locally
bounded first derivatives. The displayed estimates are consequently
interpreted almost everywhere. They use first derivatives of $v$
and of the continuous metric $h$. Although these derivatives can
have different one-sided traces, they are ordinary almost everywhere
functions. No distributional second derivative of $v$ or $h$ is
taken in this argument.

\end{proof}

\subsection{Capacity and injectivity}

Capacity is useful here because it controls both the size of an
omitted set and the Dirichlet energy needed to cut off a function near
that set. The latter control will be needed when extending the metric
and the harmonic function across the omitted region. We use the
capacities defined before Proposition~\ref{prop:marked}; standard
background on their Sobolev formulation is given in
\cite{AdamsHedberg}. In Euclidean space, the Sobolev inequality and
its trace on the unit sphere imply
\begin{equation}\label{eq:marked-capacity-size}
 |\mathcal K|^{(n-2)/n}
   +|\mathcal K\cap\Sph^{n-1}|^{(n-2)/(n-1)}\le K(n)\Cap(\mathcal K).
\end{equation}
To see this, apply the inequalities to any admissible function and use
that its value is at least one on the indicated sets. The same
argument bounds the $H^1(B_2)$ and spherical $L^2$ norms of a capacity
cutoff by its Dirichlet norm. A map and its inverse with Lipschitz
constants at most two distort capacity by a dimensional factor:
compose a test function with the inverse map and change variables.
Fill-in the bounded complementary components of a compact set does
not increase capacity. Replace an admissible test by its truncation
to $[0,1]$, then by one in those components; this deletes energy and
preserves the trace across their boundaries. Approximation, or the
equivalent Sobolev definition of capacity, gives the assertion.
Cutoffs below can always be chosen with arbitrarily small excess
energy, absorbed into $\varepsilon$.

\begin{lemma}\label{lem:marked-sheet}
For the map and function $S$ in Lemma~\ref{lem:marked-joint}, there are
a compact set $K_0\subset\R^n$ and an open set
$E\subset N$ with compact complement such that
$Y:=\R^n\setminus K_0$ is connected and
\begin{equation}\label{eq:marked-sheet}
 \bar{\Phi}:E\longrightarrow Y
\end{equation}
is an orientation-preserving diffeomorphism. Moreover,
\begin{equation}\label{eq:marked-two-capacities}
 \Cap_h(N\setminus E)+\Cap(K_0)\le K(m_\gamma+\varepsilon).
\end{equation}
In coordinates on $Y$, set
$q:=(\bar{\Phi}^{-1})^*h-\delta$. Then
\begin{equation}\label{eq:marked-metric-estimate}
 \begin{gathered}
 \|q\|_\infty\le K\sqrt\beta+o(1),\qquad
 \int_{Y}|Dq|^2\le K(m_\gamma+\varepsilon),
 \\
 \|q\|_{L^{2n/(n-2)}(Y)}^2\le K(m_\gamma+\varepsilon).
 \end{gathered}
\end{equation}
The error $o(1)$ tends to zero with the approximation errors for
the fixed fill-in.
\end{lemma}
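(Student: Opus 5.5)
The plan is to build $E$ as the connected component containing the end of a sublevel set of $S$, chosen by a coarea argument, and to take $K_0$ as the filled complement of its image.

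\textbf{Choice of level.} By the coarea formula and the energy bound \eqref{eq:marked-joint-energy} for $dS$, there is a level $t\in(4\beta,5\beta)$, regular for the smooth function $S$ and avoiding the critical values of $\bar\Phi$ on the compact part, such that
\[
 \int_{\{S=t\}}|dS|_h\le K(m_\gamma+\varepsilon).
\]
Let $U_t:=\{S<t\}$, and let $E$ be the component of $U_t$ containing the end; it exists because $S\to0$ at infinity. Capacity of $N\setminus E$ is controlled by the cutoff
\[
 a:=\min\Bigl\{1,\ \max\bigl\{0,(S-2\beta)/(2\beta)\bigr\}\Bigr\}
\]
on the end component, set equal to one on the remaining components of $\{S\ge2\beta\}$. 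This works only if the bounded components of $U_t$ carry no energy, so I would instead fill in: define $E$ as the unbounded component of $U_t$, and take the test function $\min\{1,S/t\}$-type truncation, or $1$ on the bounded complementary region. Its energy is at most
\[
 \beta^{-2}\int_{\{S<6\beta\}}|dS|^2\le K\beta^{-1}(m_\gamma+\varepsilon).
\]
Since $\beta$ is fixed and dimensional, this gives $\Cap_h(N\setminus E)\le K(m_\gamma+\varepsilon)$.

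\textbf{Injectivity via degree.} By Lemma~\ref{lem:marked-joint}, $\bar\Phi$ is a local diffeomorphism on $E$ with bi-Lipschitz differential. It is orientation preserving, since $D\Phi=I+o(1)$ on the end, $E$ is connected, and the Jacobian cannot vanish. Moreover $\bar\Phi$ is proper on $N$. Let $K_0$ be the union of $\bar\Phi(N\setminus E)$, which is compact, with the bounded components of its complement, and set $Y:=\R^n\setminus K_0$, which is connected. Then $\bar\Phi^{-1}(Y)$ might still meet $N\setminus E$; I would redefine $E:=\bar\Phi^{-1}(Y)\cap E$. On the set $\bar\Phi^{-1}(Y)\subset E$, the map $\bar\Phi$ is a proper local diffeomorphism onto the connected set $Y$, hence a finite covering. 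Its degree equals one, computed near infinity where $\Phi(x)=x+o(|x|)$ is injective for large $|x|$. A covering of degree one with all points positively oriented is a diffeomorphism. The set removed from $E$ has capacity controlled by the capacity of $K_0$, through the bi-Lipschitz pullback on $E$.

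\textbf{Main obstacle: $\Cap(K_0)$.} Transfer the cutoff $a$ through $\bar\Phi$: take $b:=1$ on $K_0$, and $b:=a\circ(\bar\Phi|_{E})^{-1}$ on $Y$. The energy changes by a bounded factor because the singular values are pinched. The difficulty is showing that $b\ge1$ near $\partial K_0$, i.e.\ that points of $E$ near $\partial K_0$ have $S\ge2\beta$. This follows if $\partial \bar\Phi(E)\subset\bar\Phi(\{S=t\})$, which holds by properness together with the local diffeomorphism property on the open set $\{S<6\beta\}$.

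\textbf{Metric estimates.} On $Y$, the Gram identity gives $|q|\le K\sqrt\beta+o(1)$. The derivative $Dq$ in target coordinates is expressed by $\Hess_h\bar\Phi$ and the Christoffel symbols, giving $|Dq|\le K|\Hess_h\bar\Phi|$. Changing variables with bounded Jacobian, the Hessian bound in \eqref{eq:marked-joint-energy} yields $\int_Y|Dq|^2\le K(m_\gamma+\varepsilon)$. Since $q=0$ outside a compact set near infinity (the metric $h$ is Euclidean there and $\bar\Phi=$ its coordinate up to a harmonic correction), the $L^{2n/(n-2)}$ bound would follow from the Sobolev inequality. To apply it, $q$ must be extended across $K_0$ with controlled energy; I would use $q\cdot(1-b)$ and absorb the cross term $|q|^2|Db|^2\le K\beta\,|Db|^2$ into the capacity bound. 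Actually $q$ is not zero near infinity but decays like $O(r^{2-n})$, so the Sobolev inequality holds in the homogeneous $H^1_0$ sense.
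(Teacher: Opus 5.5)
\textbf{The gap is in your choice of $K_0$.} In the injectivity step you take $K_0$ to be the filled image $\bar\Phi(N\setminus E)$ of the \emph{whole} bad set. This gives $\bar\Phi^{-1}(Y)\subset E$, so your covering argument works. But nothing bounds the capacity of this set. On $N\setminus E$ the defect $S$ is large, the Gram matrix of $d\bar\Phi$ is uncontrolled, and the gradient bound there depends on the fill-in. Small $h$-capacity of $N\setminus E$ therefore says nothing about the size of its image.

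Your proposed remedy does not close this. The inclusion $\partial\bar\Phi(E)\subset\bar\Phi(\{S=t\})$ is true, but $\partial K_0$ need not lie in $\partial\bar\Phi(E)$. The bad set can fold over the images of good points deep inside $E$. Then a point of $\partial K_0$ is a limit of images of points of $E$ where $S$ is near $0$. There $b=a\circ(\bar\Phi|_E)^{-1}$ tends to $0$, while $b=1$ on $K_0$, so $b$ is not an admissible capacity test. The paper explicitly allows such overlaps: points of $Y$ may have preimages outside the good component.

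If you instead take $K_0$ to be the filled complement of $\bar\Phi(E)$, as your opening sentence suggests, the cutoff problem disappears but injectivity fails. The set $\bar\Phi(\partial E)$ can meet $Y$, and the number of preimages in $E$ jumps across it. The single-branch function $a\circ(\bar\Phi|_E)^{-1}$ is then not even defined. Each of your two definitions gives only one of the two properties you need.

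\textbf{The missing idea is the paper's intermediate choice.} Take $\mathcal C$ to be the end component of a regular sublevel set $\{S<s_0\}$, with $s_0\in(2\beta,3\beta)$. Let $K_0$ be a filled neighbourhood of $\bar\Phi(\partial\mathcal C)$, that is, of the image of the entire level set, including pieces lying over the interior of $\bar\Phi(\mathcal C)$. Then count only preimages lying in $\mathcal C$. Over $Y$ this count is locally constant: properness prevents escape to infinity, and a branch reaching $\partial\mathcal C$ would have its image in $K_0$. The count is one near infinity. So $\bar\Phi$ is a diffeomorphism from $E:=\mathcal C\cap\bar\Phi^{-1}(Y)$ onto $Y$, and preimages outside $\mathcal C$ are simply ignored.

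Injectivity is not yet available when $\Cap(K_0)$ is estimated. The paper therefore uses the multi-branch function $b_*(\xi)=\max\{\chi(S(a)):\bar\Phi(a)=\xi\}$, with $\chi$ supported in the band $(\beta,4\beta)$ where $\bar\Phi$ is a local diffeomorphism. It equals one near $\bar\Phi(\partial\mathcal C)$ whatever the other branches do. The area formula, applied branch by branch, bounds its energy by $K\beta^{-2}\int_{\{S<4\beta\}}|dS|_h^2\le K(m_\gamma+\varepsilon)$. The bound for $\Cap_h(N\setminus E)$ then follows by pulling back a cutoff for $K_0$ and extending it by one.

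Your metric estimates match the paper's. The $L^{2n/(n-2)}$ bound needs only $q\to0$ at infinity together with value truncation, not any specific decay rate.
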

\begin{proof}
We first transfer the Dirichlet energy of $S$ to the target without
assuming that $\bar{\Phi}$ is globally injective. Choose a smooth scalar
function $\chi$ supported in $(\beta,4\beta)$,
equal to one near $[2\beta,3\beta]$. Since $\bar{\Phi}$ is a local
diffeomorphism on this band, the target function
\[
 b_*(\xi):=\max\{\chi(S(a)):\bar{\Phi}(a)=\xi\},
\]
with value zero if the fibre contributes no point, is locally
Lipschitz and compactly supported. In fact the contributing source
set is compact, and its fibres are discrete, hence finite. Near a
fixed target point, every nonzero contribution therefore comes from
one of finitely many inverse branches: otherwise a sequence of
additional contributing source points would have a convergent
subsequence giving another preimage there. Each branch is cut off
before leaving the local diffeomorphism region. The area formula
and \eqref{eq:marked-joint-energy} give
\[
 \int_{\R^n}|Db_*|^2\le
 K\int_{\{S<4\beta\}}|dS|_h^2\dd V_h\le K(m_\gamma+\varepsilon).
\]
At almost every target point, the derivative of the maximum is the
derivative of an active branch. If branches agree on a measurable
set, their weak derivatives agree almost everywhere on that set;
this follows by applying locality to their difference. Thus ties do
not affect the estimate. The squared norm is bounded by the
sum of the squared norms over all branches. Changing variables on
each branch gives the displayed estimate, since both its differential
and its inverse have bounded distortion. Thus multiplicity causes
no loss, and no global injectivity has been used at this stage.

Choose a regular value $s_0\in(2\beta,3\beta)$ and the component
$\mathcal C$ of $\{S<s_0\}$ containing the end. Its boundary is
compact, and $b_*=1$ on a neighborhood of $\bar{\Phi}(\partial\mathcal C)$.
Choose a compact neighborhood of that image with capacity at most
$K(m_\gamma+\varepsilon)$, fill its bounded complementary components, and call the
result $K_0$. Then
\[
 \Cap(K_0)\le K(m_\gamma+\varepsilon),\qquad Y=\R^n\setminus K_0
\]
is connected and belongs to the unbounded component of
$\R^n\setminus \bar{\Phi}(\partial\mathcal C)$.

The Jacobian of $\bar{\Phi}$ is positive throughout $\mathcal C$: it never
vanishes there and is positive on the end. Over $Y$ the number of
preimages in $\mathcal C$ is finite and locally constant. Properness
prevents escape, and crossing $\partial\mathcal C$ would put the
image in $K_0$. That number is one near infinity. Indeed, a large
end sphere has degree one under $\bar{\Phi}$, by its straight homotopy to the
identity; sufficiently distant target points have all their preimages
on the end, with positive local degree. Therefore the restriction in \eqref{eq:marked-sheet} is an
orientation-preserving diffeomorphism, where
$E:=\mathcal C\cap \bar{\Phi}^{-1}(Y)$. The set $\mathcal C$ may meet
both sides of $\Sigma$; it is the component containing the end, not
a subset of the attached exterior. From this point onward,
$\bar{\Phi}^{-1}$ always
means this inverse, even if the original map has other preimages
outside $\mathcal C$. The complement of $E$ in $N$ is
compact, because $E$ contains all sufficiently distant points of the end.

The target capacity estimate also controls $\Cap_h(N\setminus E)$.
By the definition of $E$ and the inclusion
$\bar{\Phi}(\partial\mathcal C)\subset K_0$, every frontier point of
$E$ maps into $K_0$. Pull a target cutoff equal to one near
$K_0$ back to $E$ and extend it by one on $N\setminus E$.
The pullback is locally constant across this frontier, so the extension
creates no derivative jump. Bounded distortion gives
\eqref{eq:marked-two-capacities}.
These are two capacity estimates, in two different ambient spaces.
They need not represent disjoint omitted regions. No Sobolev
inequality on $M$ is used.

On $Y$ write the metric $h$ in coordinates as $(\bar{\Phi}^{-1})^*h=\delta+q$. The
inverse of this metric matrix
is the Gram matrix of the covectors $d\bar{\Phi}_i$. Its derivative is
controlled by $\Hess_h\bar{\Phi}$, because differentiating an inner product
using the metric connection introduces precisely these covariant
Hessians. The uniform bounds on the Gram matrix permit inversion,
and changing variables through the inverse gives the
first two estimates in \eqref{eq:marked-metric-estimate}.
For the last estimate, let $a$ be a compact capacity cutoff equal
to one near $K_0$. Extend $(1-a)q$ by zero through $K_0$.
Each component is locally $H^1$, has finite Dirichlet energy, is bounded,
and tends to zero at infinity. If $b$ is such a component, then for
$t>0$ its value truncation
\[
 T_t(b):=\operatorname{sgn}(b)(|b|-t)_+
\]
has compact support and $|DT_t(b)|\le|Db|$ almost everywhere.
Euclidean Sobolev followed by monotone convergence as $t\downarrow0$
gives $\|b\|_{2n/(n-2)}^2\le K\|Db\|_2^2$.
The product rule therefore yields
\[
 \|(1-a)q\|_{2n/(n-2)}^2
 \le K\int_{Y}|Dq|^2
       +K\|q\|_\infty^2\int|Da|^2.
\]
The remaining product is bounded by
$\|aq\|_{2n/(n-2)}\le\|q\|_\infty\|a\|_{2n/(n-2)}$;
apply Sobolev to $a$ and send the excess cutoff energy to zero.
This proves the final estimate in \eqref{eq:marked-metric-estimate}
with a dimensional constant. In particular, the squared $H^1$ norm
on any target set of bounded volume is at most $K(m_\gamma+\varepsilon)$.
Continuity of the metric coefficients across the image of $\Sigma$
ensures that their first weak derivatives introduce no interface
measure. The estimate uses Hessians of the smooth coordinate
functions through metric compatibility; it does not differentiate
the piecewise connection across $\Sigma$.

\end{proof}

\subsection{The image of the round boundary}

\begin{lemma}\label{lem:marked-boundary}
Let $X:=\{y:|y|\ge1\}$ denote the known Euclidean exterior.
For any fixed sufficiently small dimensional $\eta>0$, choose
$\beta$ sufficiently small relative to $\eta^2$. If
$m_\gamma+\varepsilon$ is sufficiently small, there are a compact
set $A_X\subset X$ and a global bi-Lipschitz map
$F_0:\R^n\to\R^n$ such that
\[
 \Cap(A_X)\le K(m_\gamma+\varepsilon),\qquad
 X\setminus A_X\subset E,\qquad
 F_0=\bar{\Phi}\text{ on }X\setminus A_X,
\]
\[
 \operatorname{Lip}(F_0-\operatorname{Id})\le K\eta<1/100,
 \qquad\operatorname{Lip}(F_0),\operatorname{Lip}(F_0^{-1})\le2.
\]
After a common target translation, the spherical mean of
$F_0-\operatorname{Id}$ is zero, and
\begin{equation}\label{eq:marked-displacement}
 \|F_0-\operatorname{Id}\|_{W^{1,2}(\Sph^{n-1})}^2\le K(m_\gamma+\varepsilon).
\end{equation}
The image of the unit ball is a radial domain satisfying
\begin{equation}\label{eq:marked-radial-domain}
 \begin{gathered}
 F_0(B)=D_f=\{r\theta:0\le r<1+f(\theta)\},\\
 \|f\|_{W^{1,\infty}}\le K\eta,\qquad
 \|f\|_{H^1}^2\le K(m_\gamma+\varepsilon).
 \end{gathered}
\end{equation}
\end{lemma}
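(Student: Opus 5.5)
On the exterior $X$ the metric $h$ is exactly $\delta_y$, so wherever $S<6\beta$ on $X$ the map $\bar\Phi$ is a local diffeomorphism whose Euclidean differential satisfies $|D\bar\Phi^{T}D\bar\Phi-I|\le K\sqrt\beta+o(1)$. The plan is to use the Hessian bound of Lemma~\ref{lem:marked-joint}, which on $X$ is just $\int|D^2\bar\Phi|^2\le K(m_\gamma+\varepsilon)$ on the good set, to make $D\bar\Phi$ close to a single constant rotation. We then rotate the target to remove that rotation; this is consistent with the asymptotics $D\Phi=I+o(1)$, since the Hessian energy forces the rotation to be $I+O(\sqrt{\mathscr E})$ and it can be absorbed. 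Concretely, set
\[
 G:=D\bar\Phi-I,\qquad a_X:=\min\{1,\ \chi\text{-cutoff of }S\},
\]
where $a_X$ equals one where $S\ge2\beta$ and zero where $S\le\beta$. We let $A_X$ be a compact sublevel-type set around $\{S\ge\beta\}\cap X$, together with the set where $\bar\Phi$ leaves $E$. Its capacity is bounded by $\int|D(\chi\circ S)|^2\le K\beta^{-1}\int_{\{S<6\beta\}}|dS|^2\le K(m_\gamma+\varepsilon)$ (by \eqref{eq:marked-joint-energy}), plus $\Cap(K_0)$ and $\Cap_h(N\setminus E)$ from \eqref{eq:marked-two-capacities}. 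This gives $X\setminus A_X\subset E$ and $\Cap(A_X)\le K\mathscr E$.

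\textbf{Constructing $F_0$.} On $X\setminus A_X$, the matrix field $D\bar\Phi$ lies within $K\sqrt\beta+o(1)$ of $SO(n)$ pointwise. We form the cut-off displacement $w:=(1-a_X)(\bar\Phi-y)$ after the rotation and translation normalisation, so that $Dw=(1-a_X)G-(\bar\Phi-y)\otimes Da_X$. The first term is small in $L^\infty$. The second is small in $L^2$, but it is not small pointwise, and this is the main obstacle: we need a \emph{Lipschitz} extension with $\operatorname{Lip}(F_0-\mathrm{Id})\le K\eta$, and a capacity cutoff does not give one.

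I would handle this by replacing the cutoff with a Lipschitz truncation (Acerbi--Fusco / Kirchheim type). First, the bound $|G|\le K\sqrt\beta\le\eta$ holds on $X\setminus A_X$. Using the Hessian $L^2$ bound and a Poincaré/Sobolev argument on the connected unbounded good set, I would show that $\bar\Phi-y$ is $\eta$-Lipschitz on $X\setminus A_X$ in the \emph{Euclidean} distance, not just the intrinsic one. This is where smallness of $\mathscr E$ relative to $\eta$ enters: the maximal function of $|D^2\bar\Phi|$ exceeds a threshold $\eta$ only on a set of capacity $\le K\eta^{-2}\mathscr E$, and we enlarge $A_X$ by that set. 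Next, Kirszbraun's extension of the restricted displacement from $X\setminus A_X$ to all of $\R^n$ keeps the Lipschitz constant $\le K\eta<1/100$. Hence $F_0=\mathrm{Id}+w$ is bi-Lipschitz with constants $\le2$, and it is a global homeomorphism by the standard perturbation-of-identity argument, since $\operatorname{Lip}(w)<1$ makes $F_0$ injective and proper.

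\textbf{Boundary estimates.} The estimate \eqref{eq:marked-displacement} then follows from the trace inequality on $\Sph^{n-1}$ applied to $w$. Its gradient is in $L^2$ on $X$: it is bounded by $|G|$ off $A_X$, and by $\eta\cdot\mathbf 1_{A_X}$ on $A_X$, whose measure is controlled through \eqref{eq:marked-capacity-size}. The function $G$ has $L^2$ norm controlled via $\int|DG|^2\le K\mathscr E$ and the Sobolev inequality, since $G\to0$ at infinity; the mean is removed by translation. For the radial domain, $\operatorname{Lip}(F_0-\mathrm{Id})\le K\eta$ implies that $F_0(\Sph^{n-1})$ is a Lipschitz graph over the sphere: radial rays meet it transversally because the tangential perturbation is small. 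This gives $F_0(B)=D_f$ with $\|f\|_{W^{1,\infty}}\le K\eta$. The quantity $f$ is a Lipschitz function of the trace $F_0|_{\Sph^{n-1}}-\mathrm{Id}$, via the implicit function theorem applied to $|F_0(\theta')|=1+f(\theta)$, so $\|f\|_{H^1}^2\le K\|F_0-\mathrm{Id}\|_{W^{1,2}(\Sph^{n-1})}^2\le K\mathscr E$.
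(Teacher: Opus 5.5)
Your outline (small-capacity exceptional set, a pointwise derivative bound off it, a Euclidean Lipschitz bound on good pairs, then Kirszbraun, contraction and the radial graph) matches the paper. However, the step that produces the exceptional set fails as written.

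\textbf{Closeness to $I$ is not justified.} On $\{S<6\beta\}\cap X$ the Gram estimate only gives $\operatorname{dist}(D_y\bar\Phi,SO(n))\le K\sqrt\beta$. It does not give $|G|=|D_y\bar\Phi-I|\le K\sqrt\beta$. So your claims that ``the first term is small in $L^\infty$'' and that ``$|G|\le K\sqrt\beta\le\eta$ on $X\setminus A_X$'' are unjustified for your $A_X$, which is built from $\{S\ge\beta\}$ and $X\setminus E$. Nothing prevents the rotation part of $D_y\bar\Phi$ from being far from $I$ on a small neighbourhood of the bad set, at a Hessian cost compatible with $\mathscr E$. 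Rotating the target is also not available, since the statement requires $F_0=\bar\Phi$ on $X\setminus A_X$ up to a translation only. What is needed is to anchor at infinity. The cut-off field $\mathscr C=\chi_0(S)D_y\bar\Phi$ is defined on all of $X$ and tends to $I$ at infinity. It satisfies $\int_X|D\mathscr C|^2\le K\mathscr E$, because $\Hess_h\bar\Phi=D_y^2\bar\Phi$ and $dV_h=dy$ on $X$. Hence a threshold cutoff of $|\mathscr C-I|$ at level $\eta$ has energy at most $K\eta^{-2}\mathscr E$, and its plateau must be added to the bad set. You also need to extend all these cutoffs from $X$ into the reference ball, e.g.\ by reflection, before whole-space capacity makes sense.

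\textbf{The maximal-function enlargement does not give the capacity bound.} Superlevel sets of a Hardy--Littlewood maximal function of an $L^2$ function are small in measure, not in capacity. Suppose $|D\mathscr C|\approx\eta$ on a slab of unit area and width $\mathscr E/\eta^2$. Then $\{M|D\mathscr C|>\eta/2\}$ contains the slab, whose capacity is of order one, while $\eta^{-2}\mathscr E$ is tiny. So $\Cap(A_X)\le K\mathscr E$ is lost.

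\textbf{The endpoint bound does not control the joining path.} An endpoint bound on the maximal function of the Hessian says nothing about whether the segment $[x,y]$ crosses the region where $D\bar\Phi$ is uncontrolled. It also says nothing about passing through the reference ball $B$, where there is no exterior chart. A ``Poincar\'e argument on the good set'' has no uniform constant when the holes have arbitrary geometry.

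\textbf{What the paper does instead.} It uses the Riesz potential $P_X=I_1|Da_X|$ of the gradient of the full cutoff. Its superlevel sets obey \eqref{eq:marked-potential-capacity}, and $|a_X|\le KP_X$ places the bad plateau $K_X$ inside $A_X=\{P_X\ge\delta_X\}$. Averaging over broken segments through an outward-offset ball then gives, for nearby good pairs, a path of length at most $10r$ along which $a_X<1$. Along such a path $|D\bar\Phi-I|\le\eta$ can be integrated. Distant pairs are chained along a curve around $B$, using that $|A_X|$ is small.

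\textbf{The boundary estimate.} For \eqref{eq:marked-displacement} you need the trace of the differential, $\|\mathscr C-I\|_{L^2(\Sph^{n-1})}^2\le K\mathscr E$ by radial Cauchy--Schwarz. Combine this with the Lipschitz bound on $\Sigma\cap A_X$ and spherical Poincar\'e. A trace inequality applied to $w$ itself controls only $w$, not its tangential derivative.
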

\begin{proof}
We anchor the coordinates to the known exterior by controlling the
variation of their differential there. A potential enlargement of the
bad set will turn local derivative control into a pairwise Lipschitz
estimate, allowing extension of the displacement across the ball. The matrix $D_y\bar{\Phi}$ records the covectors
$d\bar{\Phi}_i$ in the known Cartesian coordinates $y$. Multiply this matrix
by a cutoff, setting
\[
 \mathscr C:=\chi_0(S)D_y\bar{\Phi},
\]
where $\chi_0=1$ below $4\beta$ and $\chi_0=0$ above $5\beta$.
This is a bounded matrix field on all of $X$: where its cutoff is
nonzero, the joint defect controls $D_y\bar\Phi$. No exterior
coordinate matrix is being asserted on $M$. It equals
$D_y\bar{\Phi}$ on $E\cap X$, tends to $I$ at infinity,
and \eqref{eq:marked-joint-energy} gives
\[
 \int_X|D_y\mathscr C|^2\dd y\le K(m_\gamma+\varepsilon).
\]
Indeed, where the cutoff is nonzero, $d\bar{\Phi}$ is bounded and the product
rule bounds its derivative by $|D_y^2\bar{\Phi}|+K|dS|$. Since $h=\delta$
on $X$, the covariant Hessian $\Hess_h\bar{\Phi}$ in
\eqref{eq:marked-joint-energy} is exactly $D_y^2\bar{\Phi}$ there, and
$dV_h=\dd y$. This use of the known exterior is essential to the
boundary identification.

Combine the cutoff on $N$ in
\eqref{eq:marked-two-capacities} with a fixed threshold cutoff of
$|\mathscr C-I|$. This produces a compactly supported
Lipschitz function $b_0:X\to[0,1]$, of energy at most $K(m_\gamma+\varepsilon)$,
equal to one near a compact set $K_X$ containing all points of $X\setminus E$
and every point where $|D_y\bar{\Phi}-I|>\eta$.
Use thresholds strictly below $\eta$ in the cutoff, so its plateau
persists under approximation. Thus
\begin{equation}\label{eq:marked-source-good}
 X\setminus K_X\subset E,\qquad
 |D_y\bar{\Phi}-I|\le\eta\quad\text{on }X\setminus K_X.
\end{equation}

The estimate for $b_0$ holds on the exterior $X$. To apply
whole space potential estimates, extend this scalar function into a
reference Euclidean ball; this reference ball is not being identified
with the unknown fill-in. On
$1/2\le r\le1$ put
\[
 a_X(r,\theta):=\min\{1,4r-2\}\,b_0(2-r,\theta),
\]
keep $b_0$ for $r\ge1$, and use zero for $r\le1/2$.
Reflection bounds its derivative terms. The added zeroth-order term
is controlled by radial Cauchy--Schwarz, since $b_0$ vanishes at
infinity. Hence $a_X$ has energy at most $K(m_\gamma+\varepsilon)$ and equals one near
$K_X$. It may be smoothed while preserving that plateau.

A small exceptional set need not be convex, so a line segment
between two good points might still meet it. We enlarge the omitted
set slightly using a potential of the cutoff gradient. This permits
the selection of short paths along which the cutoff stays below one.
For a nonnegative function $F$ write
$I_1F(y):=\int_{\R^n}|y-z|^{1-n}F(z)\dd z$.
For smooth compactly supported $F$, the Fourier multiplier of
$I_1$ is a dimensional multiple of $|\xi|^{-1}$, so Plancherel
and homogeneous Sobolev give
$\|DI_1F\|_2\le K\|F\|_2$; the same estimate holds for
$F\in L^2$ by approximation. If $F\ge0$ and its potential is continuous near a compact set
$\mathcal K$, then, for every $t>0$, the truncation
$\min\{1,2I_1F/t\}$ equals one near
$\mathcal K\cap\{I_1F\ge t\}$. Its gradient energy is bounded by
$Kt^{-2}\|F\|_2^2$. Spatial cutoff and smoothing preserve
this plateau with arbitrarily small excess energy: the cutoff error at
infinity tends to zero by the $L^{2n/(n-2)}$ tail estimate.
Consequently
\begin{equation}\label{eq:marked-potential-capacity}
 \Cap(\mathcal K\cap\{I_1F\ge t\})
       \le Kt^{-2}\|F\|_2^2.
\end{equation}
See also \cite{AdamsHedberg} for the homogeneous Sobolev formulation.
Also the representation
formula gives $|a_X|\le K I_1|Da_X|$.
Put
\[
 P_X:=I_1|Da_X|,\qquad A_X:=X\cap\{P_X\ge\delta_X\},
\]
where $\delta_X>0$ is a small fixed number. Then $A_X$ is compact
and has capacity at most $K(m_\gamma+\varepsilon)$. Since $a_X=1$
near $K_X$, the representation estimate gives $P_X\ge1/K$ there.
Taking $\delta_X<1/K$ yields
\[
 K_X\subset A_X\subset X,\qquad
 X\setminus A_X\subset X\setminus K_X\subset E.
\]
The scalar cutoff has been extended into the reference ball, but
neither exceptional set has been extended into the fill-in.
The inclusion uses the nonlocal potential of $|Da_X|$, which need
not vanish where $a_X$ is constant.

We claim that the displacement $\bar{\Phi}-\operatorname{Id}$ has Lipschitz
constant $K(n)\eta$ on $X\setminus A_X$. This is a statement about
pairs of good points, not only about the derivative at those points.
To prove it locally,
let $x,y$ belong to that set and put $r:=|x-y|\le1/10$.
Let $Z$ be the ball of radius $r/4$ centered at
\[
 \frac{x+y}{2}+4r\,\frac{x+y}{|x+y|}.
\] Each $z\in Z$ is an intermediate point for which both
segments $[x,z]$, $[z,y]$ remain outside the unit ball, and their
total length is at most $10r$. Polar integration gives, for a
nonnegative $F$,
\[
 \frac1{|Z|}\int_Z\int_{[x,z]}F\dd s\dd z
       \le K\int_{B(x,Kr)}|x-y'|^{1-n}F(y')\dd y'.
\]
Apply this at both endpoints with $F=|Da_X|$. Some broken segment
has total cutoff variation at most $K\delta_X$. Its endpoint
values are also at most $K\delta_X$. For sufficiently small
$\delta_X$, the cutoff stays below one along the whole path, which
therefore avoids $K_X$. It need not avoid $A_X$: the derivative
estimate holds on $X\setminus K_X$, whereas $X\setminus A_X$
supplies endpoints admitting such paths. Integrating
\eqref{eq:marked-source-good}
along it proves the claim for nearby pairs. For distant pairs, put $\ell:=|x-y|>1/10$ and fix
$s:=1/1000$. Every exterior ball satisfies, for a fixed $c_{\mathrm{vol}}>0$ depending only on $n$,
$|B_s(y_*)\cap X|\ge c_{\mathrm{vol}}s^n$: the half-ball beyond a supporting
hyperplane of the unit sphere lies in $X$. Choose the mass and
approximation error so small that $|A_X|<c_{\mathrm{vol}}s^n$.
There is a curve in $X$ from $x$ to $y$ of length at most $2\ell$.
Indeed, replace any portion of their straight segment inside the ball
by the shorter spherical arc; its arc-to-chord ratio is at most
$\pi/2$. Partition the curve into at most $2\ell/s+1$ pieces of
length at most $s$. Each interior partition point has a point of
$X\setminus A_X$ within distance $s$, by the volume bound.
Keep $x$ and $y$ as the endpoints. Consecutive selected points are
at distance at most $3s<1/10$, and the sum of those distances is
at most $K(n)\ell$. Summing the local estimate proves the claimed
ambient Lipschitz bound for every pair of good points.

Apply Kirszbraun's extension theorem \cite{Kirszbraun} to the displacement,
obtaining
a map $F_0:\R^n\to\R^n$ such that $F_0-\operatorname{Id}$ extends
$\bar{\Phi}-\operatorname{Id}$ from $X\setminus A_X$ with the same
Lipschitz constant. Choose $\eta$ so that
$\operatorname{Lip}(F_0-\operatorname{Id})<1/100$.
The contraction equation
$y=\xi-(F_0(y)-y)$ shows that $F_0$ is a global bi-Lipschitz
homeomorphism, with both Lipschitz constants at most two. Its straight homotopy to the
identity preserves orientation, and
\[
 F_0=\bar{\Phi}\quad\text{on }X\setminus A_X.
\]
The differential estimates do not control an additive translation.
We fix this harmless freedom by translating $\bar{\Phi},F_0$ and all target
objects together, and translating $\Phi$ by $r_{\mathrm S}$
times the same vector so that $\bar{\Phi}=\Phi/r_{\mathrm S}$
continues to hold. Choose the translation so that
$\int_{\Sph^{n-1}}(F_0-\operatorname{Id})\dd\sigma=0$. Then
$\sup_{\Sph^{n-1}}|F_0-\operatorname{Id}|
\le2\operatorname{Lip}(F_0-\operatorname{Id})$. The same normalization
gives $|F_0(0)|\le\operatorname{Lip}(F_0-\operatorname{Id})$.
Radial Cauchy--Schwarz applied to $\mathscr C-I$, with
its zero limit at infinity, gives
\[
 \|\mathscr C-I\|_{L^2(\Sph^{n-1})}^2\le K(m_\gamma+\varepsilon).
\]
On $\Sigma\setminus A_X$ this controls the tangential derivative of
$F_0-\operatorname{Id}$; on $\Sigma\cap A_X$ that derivative is bounded by its fixed
Lipschitz constant, and this set has area at most
$K(m_\gamma+\varepsilon)^{(n-1)/(n-2)}$. Spherical Poincar\'e therefore gives
\eqref{eq:marked-displacement}.
For $\theta\in\Sph^{n-1}$ set
\[
 R_{F_0}(\theta):=|F_0(\theta)|,\qquad
 \Pi_{F_0}(\theta):=\frac{F_0(\theta)}{|F_0(\theta)|}.
\]
Then $|R_{F_0}-1|\le\|F_0-\operatorname{Id}\|_\infty$ and
$|\Pi_{F_0}-\theta|\le K\|F_0-\operatorname{Id}\|_\infty$ on the sphere.
The small Lipschitz displacement also gives
$\operatorname{Lip}(\Pi_{F_0}-\operatorname{Id})\le
K\operatorname{Lip}(F_0-\operatorname{Id})$.
Thus $\Pi_{F_0}$ is injective for the chosen tolerance and onto by degree one.
Moreover, the contraction equation and the bound for $|F_0(0)|$ give
\[
 |F_0^{-1}(0)|\le
 \frac{|F_0(0)|}{1-\operatorname{Lip}(F_0-\operatorname{Id})}<1.
\]
Thus $0\in F_0(B)$, where $B$ is the unit ball in the reference
Euclidean source space. Its radial boundary therefore bounds precisely
$F_0(B)$, giving \eqref{eq:marked-radial-domain}.
Specifically, $f(\Pi_{F_0}(\theta))=R_{F_0}(\theta)-1$. The uniform Lipschitz
bound follows from the small displacement, and the $H^1$ estimate
follows by differentiating this identity and using
\eqref{eq:marked-displacement}. Thus $F_0$
parametrizes $\partial D_f$ by the original round sphere.

\end{proof}

\subsection{Identifying the interior}

\begin{lemma}\label{lem:marked-interior}
For the sets and maps in Lemmas~\ref{lem:marked-sheet}
and~\ref{lem:marked-boundary}, there is a compact set
$L\supset K_0\cup F_0(A_X)$ with
$\Cap(L)\le K(m_\gamma+\varepsilon)$ such that
\begin{equation}\label{eq:interior-image}
 \bar{\Phi}\bigl(E\cap\operatorname{int}M
       \cap \bar{\Phi}^{-1}(\R^n\setminus L)\bigr)=D_f\setminus L.
\end{equation}
On $\partial D_f\setminus L$, the inverse of
$\bar{\Phi}$ agrees with $F_0^{-1}$ and takes values in
$\Sigma$. Thus $D_f\setminus L$ corresponds to points of $\operatorname{int}M$, while
$\partial D_f\setminus L$ corresponds to a subset of $\Sigma$.
\end{lemma}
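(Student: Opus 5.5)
The plan is a purely topological argument built on injectivity of $\bar\Phi$ on $E$, with
\[
 L:=K_0\cup F_0(A_X)\cup L_1,\qquad L_1\supset\bar\Phi(E\cap A_X).
\]
Here $L_1$ is a compact set of capacity $K(m_\gamma+\varepsilon)$; its construction is deferred to the last paragraph. The facts used throughout are these.
\begin{itemize}
\item $\bar\Phi\colon E\to Y$ is a bijection (Lemma~\ref{lem:marked-sheet}).
\item $F_0$ is a global homeomorphism with $F_0(B)=D_f$. Hence $F_0(\Sph^{n-1})=\partial D_f$ and $F_0(\operatorname{int}X)=\R^n\setminus\overline{D_f}$.
\item $F_0=\bar\Phi$ on $X\setminus A_X\subset E$ (Lemma~\ref{lem:marked-boundary}).
\end{itemize}

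\emph{Inclusion ``$\subset$'' in \eqref{eq:interior-image}.} Let $a\in E\cap\operatorname{int}M$ with $\xi=\bar\Phi(a)\notin L$. Suppose $\xi\notin D_f$. Then $\xi=F_0(b)$ for some $b\in X$. Since $\xi\notin F_0(A_X)$, we have $b\notin A_X$, so $b\in E$ and $\bar\Phi(b)=F_0(b)=\xi$. Injectivity on $E$ gives $a=b\in X$, contradicting $a\in\operatorname{int}M$.

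\emph{Inclusion ``$\supset$''.} Let $\xi\in D_f\setminus L$. Since $\xi\notin K_0$, it lies in $Y$ and has a unique preimage $a\in E$. If $a\in X\setminus A_X$, then $\xi=F_0(a)\in F_0(X)=\R^n\setminus D_f$, which is impossible. If $a\in X\cap A_X$, then $\xi\in\bar\Phi(E\cap A_X)\subset L$, which is also excluded. Hence $a\in\operatorname{int}M$.

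\emph{Boundary statement.} Let $\xi\in\partial D_f\setminus L$ and write $\xi=F_0(\theta)$ with $\theta\in\Sigma=\Sph^{n-1}$. Since $\xi\notin F_0(A_X)$, we have $\theta\notin A_X$. Therefore $\theta\in E$ and $\bar\Phi(\theta)=\xi$. Injectivity shows that the selected inverse sends $\xi$ to $\theta=F_0^{-1}(\xi)\in\Sigma$.

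The capacity of $F_0(A_X)$ is controlled by the bi-Lipschitz distortion remark, and that of $K_0$ by \eqref{eq:marked-two-capacities}. If later sections need the ``exceptional potential level'' of the figure, for instance to make $D_f\setminus L$ open with a good cutoff, I would add a sublevel set of the form \eqref{eq:marked-potential-capacity}, using \eqref{eq:marked-potential-capacity} again.

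\emph{Main obstacle: the set $L_1$.} I must show that $\Cap\bigl(\bar\Phi(E\cap A_X)\bigr)\le K(m_\gamma+\varepsilon)$. The points of $A_X$ are not controlled by $F_0$, so this cannot come from $F_0$. The plan is as follows.
\begin{enumerate}
\item Since $h=\delta$ on $X$, a Euclidean cutoff $a$ for $A_X$ is also an $h$-cutoff, with the same energy.
\item Restrict $a$ to $E$ and push it forward through the inverse diffeomorphism $(\bar\Phi|_E)^{-1}\colon Y\to E$, whose distortion is bounded (Lemma~\ref{lem:marked-joint}). This gives a locally Lipschitz function on $Y$ with energy at most $K\,\Cap(A_X)$.
\item The pushforward may fail to extend across $K_0$. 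To repair this, take the maximum with a target cutoff equal to one near $K_0$; this is the same device used for $\Cap_h(N\setminus E)$ in Lemma~\ref{lem:marked-sheet}.
\item The maximum equals one near $\bar\Phi(E\cap A_X)\cup K_0$. Take $L_1$ to be the closure of this set, which is compact because $A_X$ is compact.
\end{enumerate}
This gives $\Cap(L)\le K(m_\gamma+\varepsilon)$ with a dimensional constant.
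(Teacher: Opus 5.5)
Your topological bookkeeping is sound, and it matches the last paragraph of the paper's proof: given a compact $L\supset K_0\cup F_0(A_X)$ that also contains $\bar\Phi(E\cap A_X)\cap D_f$, the two inclusions and the boundary statement follow as you argue. The gap is in the construction of $L_1$, which is where the real content of the lemma lies. Your own argument shows $\bar\Phi(E\cap X)\cap D_f=\bar\Phi(E\cap A_X)\cap D_f$, so proving that this set has small capacity is essentially the whole lemma, not a routine capacity transfer.

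Steps 1--2 do not prove it. A Euclidean capacity cutoff $a$ for $A_X$ lives on $X$, or on the reference space $X\cup B$; it does not live on $N$. The set $E$ crosses $\Sigma$ into $M$. Moreover, $A_X$ in general meets $\Sigma$: it contains $\Sigma\setminus E$ and the boundary points where $|D_y\bar\Phi-I|>\eta$. So ``restrict $a$ to $E$'' is undefined on $E\cap M$. Extending by zero creates a jump of size one across $E\cap\Sigma\cap A_X$, so the pushforward is not in $H^1$.

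Extending $a|_\Sigma$ into $(M,h)$ with energy at most $K\Cap(A_X)$ would require a trace or extension estimate on the fill-in with a dimensional constant. No such estimate is available. A collar construction costs a factor depending on the fixed fill-in, which is fatal for the absorption in Proposition~\ref{prop:mass-vanishing}. The reference ball $B$ cannot stand in for $M$ either. You could try pulling back a target cutoff such as $a\circ F_0^{-1}$ on the $M$-side. This matches $a$ at $\theta\in\Sigma\setminus A_X$, but at $\theta\in E\cap\Sigma\cap A_X$ it requires knowing where $\bar\Phi(\theta)$ lies, which is exactly the unknown.

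The missing ingredient is the quantitative exterior Hessian control that the paper uses. It sets $\mathcal F:=|D_\xi(D_y\bar\Phi\circ\bar\Phi^{-1})|$ on $\bar\Phi(E\cap X)$. The exterior Hessian bound gives $\|\mathcal F\|_2^2\le K(m_\gamma+\varepsilon)$. It then defines $P_T:=I_1(\mathcal F+|Da_W|)$, where $a_W$ is a cutoff for $W_0=K_0\cup F_0(A_X)$.

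The argument runs as follows.
\begin{itemize}
\item Suppose $y_0\in E\cap X$ has image $\xi_0\in D_f\setminus W_0$ with $P_T(\xi_0)<\delta_T$.
\item One selects a target ray from $\xi_0$ along which $\int(\mathcal F+|Da_W|)\le K\delta_T$.
\item The coframe then barely changes along the lift, so the lift stays in $X$ and its radius increases at rate at least $1/8$.
\item The ray must leave $D_f$ through some $F_0(\theta)$ with $\theta\in\Sigma\setminus A_X$.
\item Injectivity on $E$ forces the lift to equal $\theta$ there, which has radius one. This contradicts the increasing radius.
\end{itemize}
This yields $\bar\Phi(E\cap X)\cap D_f\subset W_0\cup\{P_T\ge\delta_T\}$. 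The paper then takes $L$ to be a filled compact neighborhood of $W_0\cup(\overline{D_f}\cap\{P_T\ge\delta_T/2\})$, whose capacity is controlled by \eqref{eq:marked-potential-capacity}. Without an argument of this kind, which uses the Hessian energy rather than only the capacity of $A_X$, your set $L_1$ is not justified.
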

\begin{proof}
Although $F_0$ is a homeomorphism of Euclidean space, it was
constructed by extending the displacement and is not a coordinate
map on $M$. We must therefore show that
$\bar{\Phi}^{-1}$ maps most of $D_f$ into $M$. Merely knowing the
location of its outer boundary does not establish this assertion.

Put $W_0:=K_0\cup F_0(A_X)$. Its capacity is at most
$K(m_\gamma+\varepsilon)$, by the bi-Lipschitz bound for $F_0$. Outside
this set, the inverse exists and the map $F_0$ agrees with $\bar{\Phi}$ at every
corresponding point of $X\setminus A_X$. Choose a smooth
compact cutoff $a_W=1$ near $W_0$ of energy at most
$K(m_\gamma+\varepsilon)$. On $\bar{\Phi}(E\cap X)$, the image of the part of
the exterior in $E$, define
\[
 \mathcal F(\xi):=\bigl|D_\xi(D_y\bar{\Phi}\circ \bar{\Phi}^{-1})(\xi)\bigr|,
\]
and extend this scalar function by zero elsewhere. Here the inner
derivative $D_y\bar{\Phi}$ is computed in the known exterior coordinate,
whereas $D_\xi$ differentiates its matrix entries in target
coordinates. The chain rule expresses this derivative as
$D_y^2\bar{\Phi}$ multiplied by the derivative of the inverse.
Thus the exterior Hessian bound and bounded distortion give
$\|\mathcal F\|_2^2\le K(m_\gamma+\varepsilon)$.
Let
\[
 P_T:=I_1(\mathcal F+|Da_W|).
\]
For every $t>0$, \eqref{eq:marked-potential-capacity} bounds the capacity
of each compact subset of $\{P_T\ge t\}$ by $Kt^{-2}(m_\gamma+\varepsilon)$. The function is continuous on
compact sets: $\mathcal F$ is locally bounded, by properness and
the smooth exterior map with uniformly bounded inverse derivatives;
the local Riesz kernel is integrable against bounded data, and its
distant part is continuous in $L^2$ for $n>2$.

We prove the required exclusion by contradiction, using a target
ray whose lift moves strictly outwards in the exterior $X$.
Suppose a point $y_0\in E\cap X$
has image
$\xi_0\in D_f\setminus W_0$ with $P_T(\xi_0)<\delta_T$.
Write $\mathscr C_0:=D_y\bar{\Phi}(y_0)$ and
$\vartheta_0:=y_0/|y_0|$. The Gram estimate ensures that the singular
values of $\mathscr C_0$ lie in $[1/2,2]$; it does not assert
$\mathscr C_0\approx I$ at this point. Put
$e_*:=\mathscr C_0\vartheta_0/|\mathscr C_0\vartheta_0|$.
For every unit target direction with $|e-e_*|\le1/8$,
$\vartheta_0\cdot\mathscr C_0^{-1}e_*=1/|\mathscr C_0\vartheta_0|\ge1/2$
and $\|\mathscr C_0^{-1}\|\le2$, so
\[
 |\mathscr C_0^{-1}e|\ge\tfrac12,\qquad
 \frac{y_0}{|y_0|}\cdot\mathscr C_0^{-1}e\ge\tfrac14.
\]
This spherical cap has a fixed positive dimensional measure.
Its elements are target vectors: the inverse differential converts
them to source velocities. Polar integration selects one such $e$ with
\[
 \int_0^\infty(\mathcal F+|Da_W|)(\xi_0+se)\dd s
       \le K\delta_T.
\]
Since $a_W(\xi_0)\le KP_T(\xi_0)$, for small $\delta_T$ the cutoff
stays below one along this ray. The ray avoids $W_0$, so its
lift exists for all finite times. As long as it is in the exterior $X$, write its known
exterior coordinate as
$y(s):=\bar{\Phi}^{-1}(\xi_0+se)$. On an exterior interval,
\[
 \left|D_y\bar\Phi(y(s))-\mathscr C_0\right|
 \le\int_0^s\mathcal F(\xi_0+te)\,\dd t\le K\delta_T.
\]
This is differentiation of the exterior coframe in target coordinates;
the speed of the target ray is one.
Differentiating $\bar{\Phi}(y(s))=\xi_0+se$ therefore shows that the velocity
is close to its initial value. With $\mathbf a:=\mathscr C_0^{-1}e$ this gives
\[
 |\dot y-\mathbf a|\le\tfrac18.
\]
Put $b(s):=y(s)\cdot\mathbf a/|y(s)|$. Direct differentiation gives
\[
 |y|\dot b=|\mathbf a|^2-b^2+\mathbf a\cdot(\dot y-\mathbf a)
                    -b\frac{y}{|y|}\cdot(\dot y-\mathbf a).
\]
At a hypothetical first downward crossing of $b=1/4$,
\[
 |y|\dot b\ge |\mathbf a|^2-\frac{|\mathbf a|}{8}-\frac{3}{32}
                    \ge\frac{3}{32}>0,
\]
because $|\mathbf a|\ge1/2$. Such a crossing is impossible.
Hence $d|y|/ds\ge1/8$ throughout its exterior
interval. A lift starting on $\Sigma$ enters the exterior by this
positive one-sided derivative. The selected inverse is smooth across
$\Sigma$ as a map of the underlying smooth manifold, so this derivative
indeed determines its initial side. A first exit argument shows that it
remains exterior forever. There is no finite breakdown of the lift,
since each finite ray segment avoids $K_0\subset W_0$ and is compact
in $Y$. Its image under the continuous selected inverse is compact
in $E$; it cannot escape to a different branch.

The ray must leave the bounded domain $D_f$. At its first boundary
crossing it avoids $F_0(A_X)$, so the crossing equals
$F_0(\theta)=\bar{\Phi}(\theta)$ for a point $\theta\in\Sigma\setminus A_X$.
Uniqueness of the inverse then makes the lift equal to
$\theta$, contradicting its strictly increasing radius. We have proved
\begin{equation}\label{eq:marked-ray-exclusion}
 \bar{\Phi}(E\cap X)\cap D_f
       \subset W_0\cup\{P_T\ge\delta_T\}.
\end{equation}

Choose a compact neighborhood of
\[
 W_0\cup\bigl(\overline{D_f}\cap\{P_T\ge\delta_T/2\}\bigr)
\]
with capacity at most $K(m_\gamma+\varepsilon)$, fill its bounded
complementary components, and call it $L$. The lower threshold
$\delta_T/2$ leaves room to take a neighborhood of the set where
the exclusion might fail. More precisely, continuity of $P_T$ gives
\[
 \overline{\bar\Phi(E\cap X)\cap D_f}
 \subset W_0\cup\bigl(\overline{D_f}\cap\{P_T\ge\delta_T\}\bigr).
\]
The lower threshold provides an admissible plateau around the compact
right-hand side, to which \eqref{eq:marked-potential-capacity} applies.
This is why taking a closure is legitimate here; no general claim that
closure preserves capacity is used.

We can now identify which points correspond to the interior and exterior of $M$. If
$\xi\notin\overline{D_f}\cup W_0$, the point $F_0^{-1}(\xi)$
lies in the exterior $X$ and outside $A_X$. Hence
$F_0=\bar{\Phi}$ there, and uniqueness identifies this point with the
inverse of $\xi$. If $\xi\in\partial D_f\setminus W_0$,
the same argument identifies its inverse with the corresponding point of
$\Sigma\setminus A_X$. Finally, if
$\xi\in D_f\setminus L$, the inverse exists because
$K_0\subset L$, and it cannot be exterior or on $\Sigma$ by
\eqref{eq:marked-ray-exclusion}. It therefore lies in
$\operatorname{int}M$. Together these three observations establish
\eqref{eq:interior-image}.
No topological assumption is made on the part of $M$ omitted by this construction.

\end{proof}

\subsection{Completion of the coordinate construction}

\begin{proof}[Proof of Proposition~\ref{prop:marked}]

Apply Lemmas~\ref{lem:marked-joint}--\ref{lem:marked-interior}
with the tolerances chosen below. We construct a tensor on all of $D_f$ that agrees with
the metric perturbation on $O$.
Choose a smooth capacity cutoff $a_L=1$ near $L$, with values in
$[0,1]$ and energy at most $K(m_\gamma+\varepsilon)$. Let $\vartheta$ equal one on
$[0,1/4]$ and zero on $[1/2,1]$. Set
\[
 q_{\mathrm{ref}}:=\vartheta(a_L)q\quad\text{on }D_f\setminus L,
 \qquad
 A:=\{a_L\ge1/4\}\cap\overline{D_f},\qquad O:=D_f\setminus A,
\]
and extend $q_{\mathrm{ref}}$ by zero near $L$. Since $a_L<1/4$ on $O$,
$q_{\mathrm{ref}}=q$ there. Near $L$ the multiplier vanishes, so extension
by zero creates no jump. In the product rule, the term involving
$Da_L$ is bounded by the small uniform norm of $q$ times
the cutoff energy. Together with
\eqref{eq:marked-metric-estimate}, this gives
\[
 \|q_{\mathrm{ref}}\|_{H^1(D_f)}^2+\|q\|_{H^1(O)}^2\le K(m_\gamma+\varepsilon),
 \qquad \|q_{\mathrm{ref}}\|_\infty\le\|q\|_\infty,
 \qquad q_{\mathrm{ref}}=q\text{ on }O.
\]
The cutoff and \eqref{eq:marked-capacity-size} give
\[
 \Cap(A)\le K(m_\gamma+\varepsilon),\qquad |A|\le K(m_\gamma+\varepsilon)^{n/(n-2)}.
\]
By \eqref{eq:interior-image}, $O$ corresponds to a subdomain of $M$, and $\delta+q$ is the
metric $h$ expressed in these coordinates. In particular, it is scalar flat.
The distinction between $q$ and $q_{\mathrm{ref}}$ will be maintained below:
curvature equations hold for $\delta+q$ on $O$, whereas $q_{\mathrm{ref}}$ supplies
an $H^1$ tensor on all of $D_f$. No scalar curvature assertion is
made about $\delta+q_{\mathrm{ref}}$ in the omitted set.

Let $\Gamma:=\partial D_f\setminus A$. Its preimage in $\Sigma$ is
$F_0^{-1}(\Gamma)\subset\Sigma\setminus A_X$.
Since $F_0(\Sigma)=\partial D_f$, the omitted source set is exactly
\[
 \Sigma\setminus F_0^{-1}(\Gamma)=\Sigma\cap F_0^{-1}(A).
\]
Capacity distortion and the fixed sphere trace estimate bound its area
by $K(m_\gamma+\varepsilon)^{(n-1)/(n-2)}$. The omitted target boundary
is its image under $F_0$, so the Lipschitz area inequality gives the
same bound for its Euclidean area. The area $\operatorname{Area}_h(\Gamma)$ is exactly
$|\Sph^{n-1}|$ minus the omitted source area, because $h_\Sigma=\sigma$.
No artificial inner boundary enters this identity.

Finally, we transfer the estimates for the harmonic function $v^{-1}$
to $O$. Since $v$ is uniformly close to one on $E$, $|v^{-1}-1|\le2$ there after
fixing the small tolerances. Bounded distortion and
\eqref{eq:inverse-v-energy} bound the squared $L^2$ norm of the gradient
on $O$ by $K(m_\gamma+\varepsilon)$. On $\Gamma$, the boundary
identification $F_0^*h=\sigma$ and the boundary estimate in
\eqref{eq:inverse-v-energy} give the same bound for the squared
$L^2$ norm of $v^{-1}-1$. What remains is an
interior $L^2$ bound; it cannot be obtained by assuming a uniform
Poincar\'e inequality on the original fill-in.
Choose a capacity cutoff $0\le\chi\le1$, equal to one near $A$, with
$\|\chi\|_{H^1(B_2)}\le K\sqrt{m_\gamma+\varepsilon}$.
The zero extension of $(1-\chi)(v^{-1}-1)$ to $D_f$ has gradient norm
$O(\sqrt{m_\gamma+\varepsilon})$ and boundary trace norm $O(\sqrt{m_\gamma+\varepsilon})$.
The uniform trace--Poincar\'e inequality
\[
 \|b\|_{L^2(D_f)}\le K\bigl(\|Db\|_{L^2(D_f)}
       +\|\operatorname{Tr}b\|_{L^2(\partial D_f)}\bigr)
\]
follows by pulling back through $F_0$ from the unit ball. It bounds
the $L^2$ norm of this zero extension by $K\sqrt{m_\gamma+\varepsilon}$.
The remaining product satisfies
$\|\chi(v^{-1}-1)\|_2\le2\|\chi\|_2\le K\sqrt{m_\gamma+\varepsilon}$.
This proves the $H^1(O)$ bound for $v^{-1}-1$ without a Sobolev inequality on the
omitted part of $M$.

Choose the tolerances $\eta$ and $\beta$ above in terms of $\tau$,
then choose the smallness bound for $C$. These are dimensional
choices, since $m_\gamma\le C/[2n(n-1)]$ and $\varepsilon$ is arbitrary.
The scalar curvature equations, harmonicity equations, and boundary identities hold for
the original metrics and functions. Smooth approximation was used only to obtain the
graph map and smooth regular level sets of $S$.

\end{proof}

\section{The weighted integral estimate}\label{sec:weighted}
Proposition~\ref{prop:marked} supplies coordinates on most of the fill-in,
a domain close to the Euclidean unit ball, and a deleted set of small
capacity. A direct continuity estimate would lose a square root of the
energy. The mass argument requires instead an error of order
$\mathscr E$. We obtain it by identifying the first variation of the
conformal integral and using scalar flatness and the prescribed boundary
area to cancel it. A separate harmonic calculation controls the weight
$v^{-1}$.

All estimates below are uniform for sufficiently small $C>0$. In
particular, the model and its adjoint are kept exact in $C$: no error
depending only on a positive power of $C$ is introduced. This is needed
when the auxiliary approximation parameter tends to zero at fixed $C$.
Capacity cutoffs remove the deleted set without imposing any regularity
on its boundary. When comparing equations, we subtract them before
estimating cutoff derivatives, so that those derivatives pair with
another small quantity.

\subsection{The round model and a small comparison solution}
Unless a metric is indicated, norms, derivatives, contractions, and
measures in this section are Euclidean. The symbol $B$ denotes the
Euclidean unit ball, and $z$ is the parameter in
\eqref{eq:parameters}.

\begin{lemma}\label{lem:round-model}
For
\[
 \alpha:=\frac{1-z}{z},\qquad
 v_C(r):=\left(\frac{1+\alpha}{1+\alpha r^2}\right)^{(n-2)/2},
 \qquad 0\le r\le2,
\]
one has
\begin{gather*}
 -\frac{4(n-1)}{n-2}\Delta v_C
       =Cv_C^{\frac{n+2}{n-2}},\qquad
 v_C|_{\partial B}=1,\qquad
 v_C'(1)=-\frac{(n-2)\alpha}{1+\alpha},\\
 \int_Bv_C^{\frac{n+2}{n-2}}\dd x
       =\frac{|\Sph^{n-1}|}{nz}.
\end{gather*}
The metric $g_C:=v_C^{\frac4{n-2}}\delta$ on $\overline B$ is the
small round cap of scalar curvature $C$ and unit round boundary.
For every fixed nonnegative integer $j$,
$\|v_C-1\|_{C^j(B_2)}\le K_jC$.
\end{lemma}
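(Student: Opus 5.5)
The plan is to verify every assertion by direct computation, using the stereographic form of the round metric and the parametrization \eqref{eq:parameters}.

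\textbf{Scalar curvature equation.} Write $v_C^{4/(n-2)}\delta=(1+\alpha)^2(1+\alpha r^2)^{-2}\delta$. Substituting $s=\sqrt\alpha\,r$ turns this into
\[
 \frac{(1+\alpha)^2}{4\alpha}\cdot\frac{4\,ds^2}{(1+s^2)^2},
\]
a constant multiple of the stereographic unit sphere metric. It is therefore a round metric of radius squared $(1+\alpha)^2/(4\alpha)$, and its scalar curvature is
\[
 n(n-1)\frac{4\alpha}{(1+\alpha)^2}.
\]
From $\alpha=(1-z)/z$ we get $1+\alpha=1/z$, hence $4\alpha/(1+\alpha)^2=4z(1-z)$. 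So the scalar curvature equals $4n(n-1)z(1-z)=C$. The conformal transformation law $R_{v^{4/(n-2)}\delta}=v^{-(n+2)/(n-2)}\bigl(-\tfrac{4(n-1)}{n-2}\Delta v\bigr)$ then gives the PDE.

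\textbf{Boundary data.} The value $v_C(1)=1$ is immediate. Differentiating gives $v_C'(1)=-(n-2)\alpha/(1+\alpha)$. Since $v_C=1$ on $\partial B$, the induced boundary metric is the Euclidean one on the unit sphere, namely $\sigma$.

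\textbf{Integral identity.} Integrate the PDE over $B$ and use the divergence theorem:
\[
 \int_B v_C^{\frac{n+2}{n-2}}
 =-\frac{4(n-1)}{(n-2)C}\,|\Sph^{n-1}|\,v_C'(1)
 =\frac{4(n-1)|\Sph^{n-1}|}{C}\cdot\frac{\alpha}{1+\alpha}.
\]
Now substitute $\alpha/(1+\alpha)=1-z$ and $C=4n(n-1)z(1-z)$. The right side becomes $|\Sph^{n-1}|/(nz)$.

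\textbf{Identifying the small cap.} The conformal mean curvature formula gives the boundary mean curvature
\[
 (n-1)+\frac{2(n-1)}{n-2}v_C'(1)
 =(n-1)\Bigl(1-\frac{2\alpha}{1+\alpha}\Bigr)
 =(n-1)(2z-1)=H_C>0.
\]
Positive mean curvature means $(\overline B,g_C)$ is a geodesic ball smaller than a hemisphere in the round sphere of scalar curvature $C$. Its boundary is unit round, so this is exactly the small cap appearing in Theorem~\ref{thm:main}.

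\textbf{The $C^j$ bound.} Since $z\ge1/2$ and $1-z=\tfrac12\bigl(1-\sqrt{1-C/[n(n-1)]}\bigr)\le C/[2n(n-1)]$, we have $0\le\alpha\le KC$. The map $(\alpha,r)\mapsto v_C$ is smooth (indeed analytic) on $[0,1]\times[0,2]$, and $v_C\equiv1$ when $\alpha=0$. The mean value theorem in $\alpha$ then gives $\|v_C-1\|_{C^j(B_2)}\le K_j\alpha\le K_jC$.

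No step is a genuine obstacle here. The only point requiring care is the bookkeeping that turns the identities in $\alpha$ into identities in $z$ and $C$.
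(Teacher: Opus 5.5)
Your proposal is correct and follows the paper's proof: you differentiate for $v_C'(1)$, integrate the equation for the volume identity, use the stereographic form in $y=\sqrt\alpha\,x$ to identify the cap, and use $\alpha=O(C)$ for the $C^j$ bound. The differences are cosmetic: you read the PDE off the curvature of the stereographic metric, and you use $H_C>0$ instead of the angular radius $2\arctan\sqrt\alpha<\pi/2$ to pick out the small cap. For the $C^j$ estimate through the origin, say that $v_C$ is smooth in $(\alpha,|x|^2)$, not merely in $(\alpha,r)$.
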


\begin{proof}
The identities
\[
 C=\frac{4n(n-1)\alpha}{(1+\alpha)^2},\qquad
 1+\alpha=\frac1z
\]
give the equation and boundary derivative by differentiation; integration
of the equation gives the integral formula. In the coordinates
$y=\sqrt\alpha\,x$,
\[
 g_C=\frac{(1+\alpha)^2}{\alpha(1+|y|^2)^2}|\dd y|^2.
\]
This is a stereographic round metric of sectional curvature
$C/[n(n-1)]$. Its angular radius is $2\arctan\sqrt\alpha<\pi/2$,
and its boundary is unit round because $v_C(1)=1$. The derivative
bounds follow from $\alpha=O(C)$ and the explicit formula.
\end{proof}

For an arbitrary open set $\Omega$, $H^1_0(\Omega)$ denotes the
closure of $C_c^\infty(\Omega)$ in $H^1$. Its elements can be extended
by zero to any larger open set. The following estimates therefore apply
to domains with irregular holes.

\begin{lemma}\label{lem:torsion}
Let $\Omega\subset B_2$ be open, and let $h$ be a measurable metric
with $\|h-\delta\|_{L^\infty(\Omega)}$ sufficiently small, depending
only on $n$. The weak solution of
\[
 -\Delta_h\Theta=1,\qquad \Theta\in H^1_0(\Omega),
\]
is unique and satisfies
\begin{equation}\label{eq:torsion-bound}
 0\le\Theta,\qquad
 \|\Theta\|_\infty
 \le\frac2n\left(\frac{|\Omega|}{|B|}\right)^{2/n}
 \le\frac8n.
\end{equation}
Here $|\Omega|$ is Euclidean volume.
\end{lemma}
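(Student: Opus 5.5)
The plan has three parts: weak formulation, a sign argument, and a symmetrization comparison for the sup bound. The slack between the sharp Euclidean constant $1/(2n)$ and the stated $2/n$ is what absorbs the metric perturbation.

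\emph{Weak formulation, existence and uniqueness.} In Euclidean coordinates on $\Omega$, write the equation in divergence form:
\[
 -\partial_i\bigl(a^{ij}\partial_j\Theta\bigr)=\rho,\qquad a^{ij}:=\sqrt{\det h}\,h^{ij},\qquad \rho:=\sqrt{\det h}.
\]
These coefficients are measurable. If $\|h-\delta\|_\infty\le\tau_0(n)$ is small, then
\[
 (1-K\tau_0)|\xi|^2\le a^{ij}\xi_i\xi_j\le(1+K\tau_0)|\xi|^2,\qquad |\rho-1|\le K\tau_0 .
\]
Since $\Omega\subset B_2$ is bounded, extending by zero gives the Poincar\'e inequality on $H^1_0(\Omega)$ with the constant of $B_2$. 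Hence the bilinear form $\int a^{ij}\partial_i\phi\,\partial_j\psi$ is bounded and coercive there. The functional $\phi\mapsto\int\rho\phi$ is bounded. Lax--Milgram then gives a unique weak solution, and no regularity of $\partial\Omega$ is needed.

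\emph{Nonnegativity.} Test the equation with $\Theta^-=\max\{-\Theta,0\}\in H^1_0(\Omega)$. This gives
\[
 -\int a^{ij}\partial_i\Theta^-\,\partial_j\Theta^-=\int\rho\,\Theta^-\ge0,
\]
so $\Theta^-=0$ by ellipticity.

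\emph{Sup bound.} I would use Talenti's comparison principle for divergence-form operators with measurable coefficients, which needs only the ellipticity lower bound $\lambda=1-K\tau_0$ and $\rho\le\Lambda:=1+K\tau_0$. Alternatively one can rerun its proof directly: test with $(\Theta-t)_+$, use the coarea formula and the isoperimetric inequality for the superlevel sets $\{\Theta>t\}$, and obtain a differential inequality for the distribution function $\mu(t)=|\{\Theta>t\}|$. The conclusion is that the decreasing rearrangement $\Theta^*$ is dominated by the radial solution on the ball $\Omega^*$ with $|\Omega^*|=|\Omega|$, namely
\[
 \frac{\Lambda}{\lambda}\cdot\frac{R^2-|x|^2}{2n},\qquad R=\Bigl(\frac{|\Omega|}{|B|}\Bigr)^{1/n}.
\]
Therefore
\[
 \|\Theta\|_\infty\le\frac{\Lambda}{2n\lambda}\Bigl(\frac{|\Omega|}{|B|}\Bigr)^{2/n}.
\]
For $\tau_0$ small we have $\Lambda/\lambda\le4$, which gives the middle inequality of \eqref{eq:torsion-bound} with room to spare. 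The last inequality follows from $|\Omega|\le|B_2|=2^n|B|$.

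\emph{Main obstacle.} The delicate step is justifying the Talenti argument for $H^1_0$ functions on arbitrary open sets with merely measurable coefficients: the coarea and isoperimetric inequalities must be applied to level sets of a Sobolev function, and the possible flat parts of $\Theta$ must be handled. Should this become cumbersome, there is a cruder fallback. Since the constant $2/n$ carries a factor-of-four slack, one can compare instead with a Stampacchia level-set iteration. That yields $\|\Theta\|_\infty\le K|\Omega|^{2/n}$, but the constant is not sharp, so it would prove only a weaker form of \eqref{eq:torsion-bound}.
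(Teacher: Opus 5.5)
Your proposal is correct and follows essentially the same route as the paper: existence and uniqueness by coercivity via the zero-extension Poincar\'e inequality, nonnegativity by testing with the negative part, and the sup bound by the Talenti level-set argument for $\mathcal V(t)=|\{\Theta>t\}|$ (truncation, coarea, Cauchy--Schwarz, Euclidean isoperimetric inequality). In the paper, ellipticity and density bounds between $\tfrac12$ and $2$ give exactly your factor $\Lambda/\lambda\le4$ and hence the constant $2/n$, and your ``main obstacle'' is handled by approximating the coefficients and exhausting the domain, so the Stampacchia fallback is not needed.
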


\begin{proof}
Choose the amplitude so that both the volume density and the Dirichlet
integrand lie between one half and twice their Euclidean counterparts.
Coercivity, using zero-extension Poincar\'e, gives existence and
uniqueness. Testing with the negative part gives nonnegativity. If
$\mathcal V(t):=|\{\Theta>t\}|$, truncation, coarea,
Cauchy--Schwarz, and the Euclidean isoperimetric inequality give
\[
 \int_{\{\Theta=t\}}|D\Theta|\dd A\le4\mathcal V(t),
 \qquad
 -\mathcal V'(t)\ge
 \frac{n^2|B|^{2/n}}4\mathcal V(t)^{(n-2)/n}
\]
for almost every positive level. Integration proves
\eqref{eq:torsion-bound}. Approximation of the coefficients and
exhaustion of the domain justify this calculation for the stated weak
solution; the constants use only the displayed ellipticity bounds.
\end{proof}

\begin{lemma}\label{lem:comparison-solution}
Under the hypotheses of Lemma~\ref{lem:torsion}, for sufficiently small
$C>0$ there is a solution
\begin{equation}\label{eq:branch}
 -\frac{4(n-1)}{n-2}\Delta_hw_0
       =Cw_0^{\frac{n+2}{n-2}},\qquad
 w_0-1\in H^1_0(\Omega),\qquad 1\le w_0\le1+KC.
\end{equation}
It is unique among solutions in this interval. All constants depend
only on $n$. If $V\in H^1(\Omega)$, $V\ge1$, and
\[
 -\frac{4(n-1)}{n-2}\Delta_hV
       \ge CV^{\frac{n+2}{n-2}}
\]
in distributions, with the power locally integrable, then $w_0\le V$.
\end{lemma}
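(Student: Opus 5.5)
The plan is to obtain $w_0$ as a fixed point of the map
\[
 \mathcal T(w):=1+\frac{n-2}{4(n-1)}\,C\,G_h\bigl[w^{\frac{n+2}{n-2}}\bigr],
\]
where $G_h$ is the solution operator of $-\Delta_h$ with zero Dirichlet data in $H^1_0(\Omega)$. We work on the closed convex set $\mathcal X:=\{w:\ 1\le w\le 1+\kappa C\}$, with $w-1\in H^1_0(\Omega)$. Lemma~\ref{lem:torsion} supplies everything needed here.

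For $w\in\mathcal X$ we have $0\le w^{(n+2)/(n-2)}\le 2$ once $C$ is small. Weak comparison for $-\Delta_h$ on $H^1_0(\Omega)$, obtained by testing with positive parts, then gives
\[
 0\le G_h[w^{\frac{n+2}{n-2}}]\le 2\Theta\le \frac{16}{n}.
\]
Choosing $\kappa=\frac{4(n-2)}{n(n-1)}$, this shows $\mathcal T(\mathcal X)\subset\mathcal X$. The mean value theorem gives $|w_1^{p}-w_2^{p}|\le K|w_1-w_2|$ on the interval, with $p=(n+2)/(n-2)$. Hence
\[
 \|\mathcal T w_1-\mathcal T w_2\|_\infty\le KC\|\Theta\|_\infty\|w_1-w_2\|_\infty,
\]
where we again apply comparison, now to $\pm G_h$ of the difference. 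So $\mathcal T$ is a contraction in $L^\infty$ for $C$ small. Its fixed point is the required solution. It satisfies $w_0-1\in H^1_0$, because the Lax--Milgram solution of a bounded right-hand side lies in $H^1_0$. It is also unique among solutions in the interval, since any such solution is a fixed point of $\mathcal T$ in $\mathcal X$, and here $\kappa$ can be taken as large as $K$ in the statement after shrinking $C$. Closedness of $\mathcal X$ under $L^\infty$ limits, together with $H^1_0$ membership of the limit, follows from the uniform $H^1$ bound $\|D(\mathcal Tw-1)\|_2\le KC|\Omega|^{1/2}$ and weak compactness.

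For the comparison with a supersolution $V$, I would first note that $\min\{V,w_0\}$ is again a supersolution lying in $[1,1+KC]$, although it need not vanish on the boundary. I would instead use monotone iteration. Set $w^{(0)}=1$ and $w^{(k+1)}=\mathcal T(w^{(k)})$. Then $w^{(k)}$ increases to $w_0$, since $\mathcal T$ is order-preserving and $\mathcal T(1)\ge1$. Inductively, $w^{(k)}\le V$. Indeed, if $w^{(k)}\le V$, then
\[
 -\tfrac{4(n-1)}{n-2}\Delta_h\bigl(w^{(k+1)}-V\bigr)\le C\bigl((w^{(k)})^p-V^p\bigr)\le0
\]
in distributions. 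Moreover $(w^{(k+1)}-V)_+\in H^1_0(\Omega)$, because $w^{(k+1)}-1\in H^1_0$ and $V\ge1$. Testing with this positive part yields $w^{(k+1)}\le V$. The passage to the limit is then immediate.

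The main obstacle is justifying this last test. $V^p$ is only locally integrable and the inequality holds only in distributions, so pairing with the $H^1_0$ function $(w^{(k+1)}-V)_+$ is not automatic. I would handle it by approximating the positive part with $C_c^\infty(\Omega)$ nonnegative functions. The term $\int C\bigl((w^{(k)})^p-V^p\bigr)\phi$ has a sign because $V^p\ge (w^{(k)})^p$ and $(w^{(k)})^p$ is bounded, so Fatou's lemma, or monotone convergence applied to the nonpositive integrand, allows the limit to be taken. Only the $H^1$ gradient pairing requires convergence, and that holds in $H^1_0$.
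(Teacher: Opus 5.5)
Your proof is correct, and the supersolution comparison is the same as the paper's. You iterate from $w^{(0)}=1$, propagate $w^{(k)}\le V$ by testing the subtracted inequality with $(w^{(k+1)}-V)_+$, and use that $H^1_0(\Omega)$ is an order ideal. You handle the merely local integrability of $V^{(n+2)/(n-2)}$ through the sign of the subtracted right-hand side, where the paper replaces $V^{(n+2)/(n-2)}$ by the bounded iterate. These amount to the same thing.

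Where you genuinely differ is existence and uniqueness.
\begin{itemize}
\item The paper takes $1$ and $1+\frac{(n-2)C}{2(n-1)}\Theta$ as sub- and supersolution and runs monotone iteration. It then proves uniqueness separately: subtract two solutions, test with the difference, and absorb a $KC\|\cdot\|_2^2$ term by Poincar\'e.
\item You run the same iteration as a Banach contraction in $L^\infty$, with the constant supplied by $\|\Theta\|_\infty\le 8/n$. This gives existence, uniqueness in $[1,1+KC]$, and convergence of the monotone sequence to $w_0$ in one stroke. Your $\kappa=\frac{4(n-2)}{n(n-1)}$ is exactly the paper's supersolution bound.
\end{itemize}
What the paper's energy argument buys in addition is the uniform $H^1$ coercivity of the linearized operator. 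The paper notes this and reuses it in Lemmas~\ref{lem:solution-taylor} and~\ref{lem:first-variation}; your $L^\infty$ contraction does not record it, though it follows at once from Poincar\'e for small $C$.

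Two small remarks on your test step:
\begin{itemize}
\item No Fatou or monotone convergence argument is needed. The subtracted right-hand side is nonpositive for every nonnegative test function, so only the gradient pairing must pass to the limit.
\item On an arbitrary open $\Omega$, the order-ideal claim deserves the one-line justification the paper gives: if compactly supported $t_i\to w^{(k+1)}-1$ in $H^1$, then $\min\{t,t_i\}\to t$ in $H^1$. The conclusion $t=0$ from $Dt=0$ and $t\in H^1_0(\Omega)$ then uses Poincar\'e on the bounded set $\Omega\subset B_2$.
\end{itemize}
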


\begin{proof}
The functions $1$ and
$1+\frac{(n-2)C}{2(n-1)}\Theta$ are respectively a subsolution
and a supersolution for small $C$. Monotone iteration gives
\eqref{eq:branch}. The derivative of
$s\mapsto s^{\frac{n+2}{n-2}}$ is uniformly bounded on
$[1,1+KC]$; subtraction and testing with the difference give uniqueness
by Poincar\'e and absorption of a term bounded by $KC$ times the
squared $L^2$ norm. The same argument gives uniform coercivity for
the linearized equations used below.

For the comparison with a possibly large supersolution, start the
iteration at $w^{(0)}=1$. If $w^{(j)}\le V$, the distributional
inequality implies
\[
 -\frac{4(n-1)}{n-2}\Delta_hV
       \ge C\bigl(w^{(j)}\bigr)^{\frac{n+2}{n-2}}.
\]
This inequality extends to nonnegative $H^1_0(\Omega)$ tests: after
replacing its right side by the bounded iterate, both sides are
continuous functionals on that space. The function
$t=(w^{(j+1)}-V)_+$ belongs to $H^1_0(\Omega)$ because
$0\le t\le w^{(j+1)}-1\in H^1_0(\Omega)$ and this space is an
order ideal in $H^1(\Omega)$. Explicitly, compactly supported
nonnegative approximations $t_i$ to $w^{(j+1)}-1$ give
$\min\{t,t_i\}\to t$ in $H^1$. Testing the subtracted equations
with $t$ yields
\[
 \frac{4(n-1)}{n-2}\int_\Omega|\nabla^h t|_h^2\dd V_h\le0.
\]
Induction and passage to the monotone limit prove $w_0\le V$.
\end{proof}

We apply this lemma on $O$ with the metric $h$ in the selected
coordinates. Its small metric amplitude is supplied by
Proposition~\ref{prop:marked}. The conformal factor $V$ lies
in $H^1(O)$, since it is smooth on the fixed compact fill-in and the
coordinate distortion on $O$ is bounded. Thus the resulting $w_0$
satisfies $w_0\le V$.

\subsection{The fixed domain expansion and its exact adjoint}
The next two lemmas isolate the analytic calculation on the fixed
reference ball. They will be applied after the moving domain has been
pulled back to $B$.

\begin{lemma}\label{lem:solution-taylor}
Let $A_0\subset\overline B$ be compact, put
$\Omega=B\setminus A_0$, and let $h_1=\delta+q_1$ with
$\|q_1\|_\infty$ sufficiently small. Let $w_1$ be the solution
from Lemma~\ref{lem:comparison-solution} for this domain and metric,
and set
\[
 e:=(w_1-1)_0-(v_C-1)\in H^1_0(B),
\]
where the subscript $0$ means zero extension from $\Omega$. Then
\begin{equation}\label{eq:standalone_state}
 \|e\|_{H^1(B)}\le
 KC\left(\|q_1\|_{L^2(\Omega)}+\Cap(A_0)^{1/2}\right),
\end{equation}
and
\begin{equation}\label{eq:integral-taylor}
\begin{aligned}
 \int_\Omega w_1^{\frac{n+2}{n-2}}\dd V_{h_1}
       -\frac{|\Sph^{n-1}|}{nz}
 ={}&\frac12\int_\Omega(\tr q_1)
                   v_C^{\frac{n+2}{n-2}}\dd x\\
 &+\frac{n+2}{n-2}\int_Bv_C^{\frac4{n-2}}e\dd x\\
 &+O\!\left(\|q_1\|_{L^2(\Omega)}^2+\Cap(A_0)\right).
\end{aligned}
\end{equation}
\end{lemma}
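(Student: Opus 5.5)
The plan is to compare $w_1$ with the explicit model $v_C$ of Lemma~\ref{lem:round-model} through a weak formulation posed only in $H^1_0(\Omega)$. This requires no regularity of $\partial A_0$. The hole enters only through a capacity cutoff, and the metric enters only through the coefficient difference $h_1^{ij}\sqrt{\det h_1}-\delta^{ij}=O(|q_1|)$. Write $p:=\frac{n+2}{n-2}$ and $c_n:=\frac{4(n-1)}{n-2}$.

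\emph{Step 1: the state estimate \eqref{eq:standalone_state}.} Choose a smooth cutoff $0\le\chi\le1$ equal to one near $A_0$, with $\|\chi\|_{H^1(B_2)}^2\le K\Cap(A_0)$; this uses the remark after \eqref{eq:marked-capacity-size}. Set $\psi:=(1-\chi)(v_C-1)$. Since $v_C-1$ vanishes on $\partial B$ and $\chi=1$ near $A_0$, we have $\psi\in H^1_0(\Omega)$. Decompose
\[
 e=d-\chi(v_C-1),\qquad d:=(w_1-1-\psi)_0\in H^1_0(\Omega).
\]
Because $\|v_C-1\|_{C^1(B_2)}\le KC$, the second piece has $H^1(B)$ norm at most $KC\,\Cap(A_0)^{1/2}$. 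For $d$, test the weak equation of $w_1$ (metric $h_1$) and the Euclidean equation of $v_C$ against the same $\phi\in H^1_0(\Omega)$, and subtract before estimating. The difference has three sources.
\begin{itemize}
\item[(i)] The coefficient error $\int_\Omega(h_1^{ij}\sqrt{h_1}-\delta^{ij})\partial_iv_C\partial_j\phi$ is at most $KC\|q_1\|_{L^2(\Omega)}\|D\phi\|_2$.
\item[(ii)] The cutoff term $\int D(\chi(v_C-1))\cdot D\phi$ is at most $KC\,\Cap(A_0)^{1/2}\|D\phi\|_2$.
\item[(iii)] The nonlinear difference $C\int(w_1^p\sqrt{h_1}-v_C^p)\phi$ is bounded by $KC(|e|+|q_1|)|\phi|$, since both functions lie in $[1,1+KC]$.
\end{itemize}
Take $\phi=d$. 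Uniform ellipticity, the zero-extension Poincar\'e inequality on $B$, and absorption of the $KC\|e\|_2\|d\|_2$ term for small $C$ give $\|d\|_{H^1}\le KC(\|q_1\|_{L^2(\Omega)}+\Cap(A_0)^{1/2})$, which yields \eqref{eq:standalone_state}. This is the same coercivity already used for uniqueness in Lemma~\ref{lem:comparison-solution}.

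\emph{Step 2: the integral expansion \eqref{eq:integral-taylor}.} Expand the volume density as $dV_{h_1}=(1+\frac12\tr q_1+O(|q_1|^2))\dd x$. On $\Omega$ we have $w_1=v_C+e$, so Taylor's formula on $[1,1+KC]$ gives
\[
 w_1^p=v_C^p+p\,v_C^{p-1}e+O(e^2),
\]
where $p-1=\frac4{n-2}$. Multiplying out, the main term is $\int_\Omega v_C^p\dd x=\frac{|\Sph^{n-1}|}{nz}-\int_{A_0}v_C^p$. The volume $|A_0|\le K\Cap(A_0)^{n/(n-2)}$ is $O(\Cap(A_0))$, using that $\Cap(A_0)\le\Cap(\overline B)$ is bounded. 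The linear terms are exactly $\frac12\int_\Omega\tr q_1\,v_C^p$ and $p\int_\Omega v_C^{4/(n-2)}e$. Replacing $\Omega$ by $B$ in the latter costs only $\int_{A_0}|v_C-1|\le KC|A_0|$, because $e=-(v_C-1)$ on $A_0$. The remaining terms are $\int|q_1||e|$, $\int e^2$, $\int|q_1|^2$, and $\int|q_1|\,|w_1^p-v_C^p|$. By Step 1 and Cauchy--Schwarz, each is $O(\|q_1\|_{L^2(\Omega)}^2+\Cap(A_0))$.

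\emph{Main obstacle.} The only delicate point is in Step 1. The equation of $v_C$ holds on all of $B$, but it may be tested only against functions vanishing near $A_0$. The hole must therefore be handled by subtracting the equations first and then pairing the cutoff derivative $D\chi$ with the small factor $v_C-1=O(C)$. Otherwise one would lose the factor $C$ in front of $\Cap(A_0)^{1/2}$. No boundary regularity of $A_0$ is used anywhere, since all test functions lie in $H^1_0(\Omega)$ by the order-ideal argument of Lemma~\ref{lem:comparison-solution}.
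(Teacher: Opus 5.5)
Your proposal is correct and follows essentially the same route as the paper. The paper uses the same decomposition $e=\varphi_0-\chi_0(v_C-1)$, where $\varphi=(w_1-1)-(1-\chi_0)(v_C-1)$ is your $d$; it subtracts the two weak equations before estimating and absorbs the nonlinear term by coercivity and Poincar\'e. It then makes the same Taylor expansion, charging the hole $K|A_0\cap B|\le K\Cap(A_0)$ for the omitted model integral and $KC|A_0\cap B|$ for extending the linear $e$ term.
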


\begin{proof}
Choose a smooth cutoff $0\le\chi_0\le1$, equal to one near $A_0$,
with
\[
 \|\chi_0\|_{H^1(B_2)}^2\le K\Cap(A_0).
\]
An arbitrarily small excess is understood and removed after the
estimates. The $L^2$ part follows from homogeneous Sobolev and H\"older
on $B_2$. The correction obeys
\[
 \|\chi_0(v_C-1)\|_{H^1(B)}
       \le KC\Cap(A_0)^{1/2},
\]
and $(1-\chi_0)(v_C-1)\in H^1_0(\Omega)$.

For an admissible test $\varphi\in H^1_0(\Omega)$, its zero extension
belongs to $H^1_0(B)$ and can therefore test the model equation.
Subtracting the model equation from the equation for $w_1$ gives
\begin{align*}
 &\frac{4(n-1)}{n-2}\int_\Omega
   \sqrt{\det h_1}\,(h_1^{-1}De)\cdot D\varphi\dd x\\
 &\quad=C\int_\Omega\sqrt{\det h_1}
   \left(w_1^{\frac{n+2}{n-2}}-v_C^{\frac{n+2}{n-2}}\right)
   \varphi\dd x\\
 &\qquad+C\int_\Omega(\sqrt{\det h_1}-1)
                        v_C^{\frac{n+2}{n-2}}\varphi\dd x\\
 &\qquad-\frac{4(n-1)}{n-2}\int_\Omega
   (\sqrt{\det h_1}\,h_1^{-1}-I)Dv_C\cdot D\varphi\dd x.
\end{align*}
The last two terms are bounded by
$KC\|q_1\|_2\|\varphi\|_{H^1}$, because $Dv_C=O(C)$.
Use the particular test
\[
 \varphi=(w_1-1)-(1-\chi_0)(v_C-1)\in H^1_0(\Omega).
\]
On all of $B$, $e=\varphi_0-\chi_0(v_C-1)$. The derivative of
the nonlinear power is uniformly bounded on the common small interval.
Ellipticity and Poincar\'e absorb the resulting
$KC\|\varphi\|_2^2$ term, while the cutoff correction costs at most
$KC\Cap(A_0)^{1/2}\|\varphi\|_{H^1}$. This proves
\eqref{eq:standalone_state}.

On $\Omega$, Taylor expansion gives
\[
 \sqrt{\det h_1}\,w_1^{\frac{n+2}{n-2}}
 =v_C^{\frac{n+2}{n-2}}
  +\frac12(\tr q_1)v_C^{\frac{n+2}{n-2}}
  +\frac{n+2}{n-2}v_C^{\frac4{n-2}}e
  +O(|q_1|^2+e^2).
\]
The derivatives of the power are bounded on this positive interval in
every dimension. The integrated remainder is controlled by
\eqref{eq:standalone_state}. The omitted model integral over
$A_0\cap B$ costs $K|A_0\cap B|$, and extending the linear $e$
integral to $B$ costs $KC|A_0\cap B|$, because $e=1-v_C$ there.
The isocapacitary inequality, on this fixed bounded set, gives
$|A_0\cap B|\le K\Cap(A_0)$. This proves
\eqref{eq:integral-taylor}.
\end{proof}

The linear solution error in \eqref{eq:integral-taylor} is generally
only of order $C(\|q_1\|_2+\Cap(A_0)^{1/2})$. It cannot be
discarded as a quadratic error. An adjoint equation expresses it as
another linear metric term.

\begin{lemma}\label{lem:first-variation}
Under the hypotheses of Lemma~\ref{lem:solution-taylor}, define the
following smooth radial functions and tensor on $B_2$:
\begin{align}
 \psi(r)&:=\frac1C\left[
 \frac{(1-\alpha r^2)(1+\alpha)}{(1+\alpha r^2)(1-\alpha)}
 v_C(r)-1\right],
 \label{eq:standalone_adjoint}\\
 \zeta(r)&:=\frac1{2(n-1)}\int_r^1 s
 \left[\frac{4(n-1)}{n-2}\psi'(s)v_C'(s)
       +(1+C\psi(s))v_C(s)^{\frac{n+2}{n-2}}\right]\dd s,
 \label{eq:standalone_zeta}\\
 T&:=D^2\zeta-(\Delta\zeta)I.
 \label{eq:standalone_stresspotential}
\end{align}
Then
\begin{equation}\label{eq:standalone_coefficient}
 \int_\Omega w_1^{\frac{n+2}{n-2}}\dd V_{h_1}
       -\frac{|\Sph^{n-1}|}{nz}
 =\int_\Omega T:q_1\dd x
  +O\!\left(\|q_1\|_{L^2(\Omega)}^2+\Cap(A_0)\right).
\end{equation}
Moreover, $\zeta(1)=0$ and $T_{rr}(1)=-(n-1)\zeta'(1)$.
Every fixed number of derivatives of $\psi$, $\zeta$, and $T$ is
uniformly bounded as $C\downarrow0$. The tensor $T$ is a radial
tensor field; it is not in general constant.
\end{lemma}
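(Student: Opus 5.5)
The plan is to convert the linear solution error $\frac{n+2}{n-2}\int_Bv_C^{4/(n-2)}e\,dx$ in \eqref{eq:integral-taylor} into a linear metric term. I would do this by testing the equation for $e$ against an adjoint solution $\psi$. Then I would write the total linear metric term as a contraction with a radial tensor, and recognize that tensor as $D^2\zeta-(\Delta\zeta)I$. Write $p:=\frac{n+2}{n-2}$ and
\[
 \mathcal L\varphi:=-\frac{4(n-1)}{n-2}\Delta\varphi-Cp\,v_C^{4/(n-2)}\varphi
\]
for the model linearized operator. This operator is self-adjoint and, by Lemma~\ref{lem:comparison-solution}, uniformly coercive on $H^1_0(B)$ for small $C$.

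\emph{Step 1: the adjoint function.} The function $\psi$ in \eqref{eq:standalone_adjoint} comes from the one-parameter family of round caps $\lambda^{(n-2)/2}v_C(\lambda r)$, i.e.\ conformal dilations of the model. Differentiating this family in $\lambda$ gives elements of the kernel of $\mathcal L$. The factor $\frac{1-\alpha r^2}{1+\alpha r^2}$ is exactly that dilation derivative, normalized to equal one at $r=1$, while $v_C-1$ accounts for the inhomogeneity. I would verify by direct differentiation, using $C=4n(n-1)\alpha/(1+\alpha)^2$, that $\mathcal L\psi=p\,v_C^{4/(n-2)}$ and that $\psi(1)=0$; the latter holds because $v_C(1)=1$ and the prefactor equals one at $r=1$. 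The explicit formula also makes clear that $\psi=O(1)$ with all derivatives bounded as $C\downarrow0$, since the bracket is $O(C)$ smoothly in $r$.

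\emph{Step 2: testing the difference equation.} Since $e,\psi\in H^1_0(B)$, we have $p\int_Bv_C^{4/(n-2)}e=\int_B\psi\,\mathcal Le$ weakly. The equation for $e$ is known only on $\Omega$, so I would test the subtracted equation from the proof of Lemma~\ref{lem:solution-taylor} with $(1-\chi_0)\psi\in H^1_0(\Omega)$. On $A_0$, $e=1-v_C=O(C)$, and the cutoff error is $O(\|\chi_0\|_{H^1}\cdot C\cdot\|e\|_{H^1}+C^2\Cap(A_0))$. By \eqref{eq:standalone_state} this is $O(\|q_1\|_2^2+\Cap(A_0))$. Pairings are always taken between the cutoff derivative and a quantity that is already small, as announced at the start of the section. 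Next I would linearize the right side of the subtracted equation. The difference $w_1^p-v_C^p$ is replaced by $pv_C^{4/(n-2)}e$ (the term moved into $\mathcal L$), and the remainder is quadratic. Products $q_1\cdot e$ have size $KC\|q_1\|_2(\|q_1\|_2+\Cap^{1/2})$. The metric coefficients become $\sqrt{\det h_1}-1=\frac12\tr q_1+O(|q_1|^2)$ and $\sqrt{\det h_1}h_1^{-1}-I=\frac12(\tr q_1)I-q_1+O(|q_1|^2)$. The result is
\[
 \int_\Omega q_1:\Bigl[\tfrac12 Cv_C^{p}\psi\,I-\tfrac{4(n-1)}{n-2}\bigl(\tfrac12(Dv_C\cdot D\psi)I-Dv_C\otimes D\psi\bigr)\Bigr]dx
\]
plus admissible errors. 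Adding $\frac12\int(\tr q_1)v_C^p$ from \eqref{eq:integral-taylor} gives a radial integrand $q_1:[a(r)I+b(r)\hat x\otimes\hat x]$, where
\[
 a=\tfrac12(1+C\psi)v_C^p-\tfrac{2(n-1)}{n-2}\psi'v_C',\qquad
 b=\tfrac{4(n-1)}{n-2}\psi'v_C'.
\]

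\emph{Step 3: identification with $T$.} For radial $\zeta$,
\[
 T=(\zeta''-\zeta'/r)\,\hat x\otimes\hat x-(\zeta''+(n-2)\zeta'/r)\,I.
\]
Definition \eqref{eq:standalone_zeta} gives $\zeta'=-\frac{r}{2(n-1)}[b+(1+C\psi)v_C^p]$. I would check the two coefficient identities $-(\zeta''+(n-2)\zeta'/r)=a$ and $\zeta''-\zeta'/r=b$. This requires using the ODE for $\psi$, namely $\mathcal L\psi=pv_C^{4/(n-2)}$, together with the model equation, to differentiate the bracket; this is the genuinely computational step and the one I expect to be most delicate, since the constants must match exactly rather than up to $O(C)$. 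A useful consistency check is that $T$ is automatically divergence free, which matches the fact that the linear functional must vanish on Lie derivatives $\mathcal L_X\delta$ of vector fields supported away from the boundary. Finally, $\zeta(1)=0$ holds by definition, and at $r=1$ one has $T_{rr}=-(n-1)\zeta'(1)$, because the $\zeta''$ terms cancel in the $rr$ component. Uniform bounds on the derivatives of $\zeta$ and $T$ follow from those of $\psi$ and $v_C$ (Lemma~\ref{lem:round-model}).
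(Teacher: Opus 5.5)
Your proposal is correct and follows the paper's proof essentially step for step. It uses the same dilation-generated adjoint $\psi$, the same test function $(1-\chi_0)\psi$ in the subtracted equation with cutoff derivatives paired against $e$ or $q_1$, and the same radial tensor $aI+b\,\hat x\otimes\hat x$, which is the paper's $\widehat T$; your direct check of the tangential coefficient via the model and adjoint ODEs is exactly the paper's verification that $\operatorname{div}\widehat T=0$. One correction to the heuristic in your Step~1: the dilation kernel element is $\frac{1-\alpha r^2}{1+\alpha r^2}v_C$ rather than the factor alone, and the inhomogeneity comes from subtracting the constant $1$, since $\mathcal L1=-Cp\,v_C^{4/(n-2)}$; the direct differentiation you propose confirms the adjoint equation regardless.
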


\begin{proof}
The explicit adjoint satisfies
\begin{equation}\label{eq:adjoint-equation}
 \left(-\frac{4(n-1)}{n-2}\Delta
       -\frac{(n+2)C}{n-2}v_C^{\frac4{n-2}}\right)\psi
 =\frac{n+2}{n-2}v_C^{\frac4{n-2}},
 \qquad \psi|_{\partial B}=0.
\end{equation}
Indeed, differentiation of the dilation family for the model equation
shows that
$\frac{1-\alpha r^2}{1+\alpha r^2}v_C$ solves the homogeneous
linearized equation. Normalize its boundary value to one, subtract
one, and divide by $C$ to obtain \eqref{eq:standalone_adjoint}.
Coercivity gives uniqueness. Testing the adjoint equation with
$e\in H^1_0(B)$ yields the exact identity
\begin{equation}\label{eq:adjoint-testing}
\begin{aligned}
 \frac{n+2}{n-2}\int_Bv_C^{\frac4{n-2}}e\dd x
 ={}&\frac{4(n-1)}{n-2}\int_BDe\cdot D\psi\dd x\\
 &-\frac{(n+2)C}{n-2}\int_Bv_C^{\frac4{n-2}}e\psi\dd x.
\end{aligned}
\end{equation}

Take the common test
$\varphi=(1-\chi_0)\psi\in H^1_0(\Omega)$ in the equations
for $w_1$ and $v_C$, and subtract. Using
\[
 \sqrt{\det h_1}\,h_1^{-1}
 =I+\tfrac12(\tr q_1)I-q_1+O(|q_1|^2)
\]
and keeping the linear terms gives
\begin{equation}\label{eq:linearized-comparison}
\begin{aligned}
 &\frac{4(n-1)}{n-2}\int_BDe\cdot D\varphi\dd x
 -\frac{(n+2)C}{n-2}\int_Bv_C^{\frac4{n-2}}e\varphi\dd x\\
 &\quad=\frac C2\int_\Omega(\tr q_1)
                        v_C^{\frac{n+2}{n-2}}\varphi\dd x\\
 &\qquad-\frac{4(n-1)}{n-2}\int_\Omega
       \left[\tfrac12(\tr q_1)I-q_1\right]
                         Dv_C\cdot D\varphi\dd x\\
 &\qquad+O\!\left(\|q_1\|_2^2+\Cap(A_0)\right).
\end{aligned}
\end{equation}
The left integrals are over $B$ because $\varphi$ vanishes near
the hole. The positive $C/2$ term is the source volume variation;
the negative Dirichlet term is moved from the left side.

For completeness, the source remainders are bounded by
\[
 KC\int_\Omega(|q_1|^2+e^2+|q_1||e|)\dd x.
\]
The omitted gradient terms are bounded by the integral of
$K(|q_1||De|+|q_1|^2|Dv_C|)|D\varphi|$.
The bounded part $(1-\chi_0)D\psi$ contributes at most
\[
 K\|q_1\|_2\|De\|_2+KC\|q_1\|_2^2,
\]
and the part $-\psi D\chi_0$ contributes at most
\[
 K\|q_1\|_\infty
      (\|De\|_2+C\|q_1\|_2)\|D\chi_0\|_2.
\]
The solution estimate and the capacity bound control these by the
remainder in \eqref{eq:linearized-comparison}.

We may now replace $\varphi$ by $\psi$. The correction on the left
is bounded on the whole ball by
\[
 K\|e\|_{H^1(B)}\|\chi_0\psi\|_{H^1(B)}
 \le KC\left(\|q_1\|_2^2+\Cap(A_0)\right);
\]
the two linear metric corrections obey the same bound. This estimate
already includes the zero-extension error inside the hole, so no
additional hole integral is needed. By \eqref{eq:adjoint-testing},
\begin{equation}\label{eq:adjoint-metric-term}
\begin{aligned}
 \frac{n+2}{n-2}\int_Bv_C^{\frac4{n-2}}e\dd x
 ={}&\frac C2\int_\Omega(\tr q_1)
                         v_C^{\frac{n+2}{n-2}}\psi\dd x\\
 &-\frac{4(n-1)}{n-2}\int_\Omega
       \left[\tfrac12(\tr q_1)I-q_1\right]
                         Dv_C\cdot D\psi\dd x\\
 &+O\!\left(\|q_1\|_2^2+\Cap(A_0)\right).
\end{aligned}
\end{equation}
This is the reason to subtract before estimating: each cutoff
derivative pairs with $e$ or $q_1$, giving capacity rather than its
square root.

Combining \eqref{eq:adjoint-metric-term} and
\eqref{eq:integral-taylor} gives contraction with
\begin{equation}\label{eq:standalone_stress}
 \begin{aligned}
 \widehat T={}&\frac{4(n-1)}{n-2}\operatorname{sym}(D\psi\otimes Dv_C)\\
 &+\left[\frac12v_C^{\frac{n+2}{n-2}}
       -\frac{2(n-1)}{n-2}\operatorname{div}(\psi Dv_C)\right]I.
 \end{aligned}
\end{equation}
Here $\operatorname{sym}$ is the symmetric part, including the
factor $1/2$. Expanding the divergence and using the model equation
recovers the two metric terms above. Direct differentiation gives
\[
 \operatorname{div}\widehat T
 =\frac{2(n-1)}{n-2}
   \bigl[(\Delta\psi)Dv_C-\psi D(\Delta v_C)\bigr]
   +\frac{n+2}{2(n-2)}v_C^{\frac4{n-2}}Dv_C=0,
\]
where the last equality uses both the model and adjoint equations.

The radial and tangential eigenvalues of a radial divergence-free
symmetric tensor satisfy
\[
 \widehat T_{rr}=\frac{2(n-1)}{n-2}\psi'v_C'
            +\frac12(1+C\psi)v_C^{\frac{n+2}{n-2}},
 \qquad
 \widehat T_{\mathrm{tan}}=\widehat T_{rr}+\frac r{n-1}(\widehat T_{rr})'.
\]
In particular, the scalar multiple of $I$ in
\eqref{eq:standalone_stress} contributes to the radial component.
The definition of $\zeta$ gives
$\zeta'=-r\widehat T_{rr}/(n-1)$. Hence the radial and tangential
eigenvalues of $D^2\zeta-(\Delta\zeta)I$ are
\[
 -(n-1)\frac{\zeta'}r=\widehat T_{rr},\qquad
 -\zeta''-(n-2)\frac{\zeta'}r=\widehat T_{\mathrm{tan}}.
\]
The identity extends smoothly through the origin, since the radial
coefficient is smooth and even. Thus $\widehat T=T$, proving
\eqref{eq:standalone_coefficient}; also $\zeta(1)=0$ and
$T_{rr}(1)=-(n-1)\zeta'(1)$.

Finally, the apparent singularity $1/C$ in $\psi$ is removable in
every fixed $C^j(B_2)$ norm: its numerator is smooth in $\alpha$,
vanishes at $\alpha=0$, and
$C=4n(n-1)\alpha/(1+\alpha)^2$. Consequently,
\[
 \psi\longrightarrow\frac{(n+2)(1-r^2)}{8n(n-1)},\qquad
 \zeta\longrightarrow\frac{1-r^2}{4(n-1)},\qquad
 T\longrightarrow\tfrac12 I.
\]
The tensor $T$ is therefore both the exact first variation coefficient
and the formal scalar curvature adjoint applied to $\zeta$:
\[
 L_{\mathrm{sc}}q_1
   =\partial_i\partial_j(q_1)_{ij}-\Delta\tr q_1,
 \qquad L_{\mathrm{sc}}^*\zeta=T.
\]
\end{proof}

\subsection{Moving the domain and cancelling the first variation}

The preceding fixed ball calculation is now applied to the domain
$O=D_f\setminus A$ of Proposition~\ref{prop:marked}. There are two
first order contributions: the metric perturbation and the displacement
of the outer boundary. Scalar flatness converts the former into an area
variation, and the prescribed round boundary cancels their sum.

\begin{figure}[tbp]
\centering
\begin{tikzpicture}[>=Latex,font=\small]
\draw[thick,fill=blue!3] (0,0) circle(1.45);
\fill[gray!50] (.45,.2) ellipse(.35 and .3);
\node at (-.37,-.15) {$\Omega$};
\node at (.54,.75) {$A_0$};
\node at (0,-1.9) {$x\in B$};
\draw[->,thick] (1.9,.35)--(4.0,.35) node[midway,above] {$\Psi=\operatorname{Id}+X_f$};
\node[align=center] at (0,-2.5) {$T(x)$ is built on this fixed ball};
\begin{scope}[xshift=5.9cm]
\path[draw=blue!50!black,thick,fill=blue!3] plot[smooth cycle] coordinates {(-1.48,0) (-.95,-1.2) (.4,-1.35) (1.51,-.45) (1.39,.74) (.43,1.58) (-.85,1.26)};
\fill[gray!50] (.5,.24) ellipse(.39 and .32);
\node at (-.35,-.15) {$O$};
\node at (.62,.82) {$A$};
\node at (0,-1.9) {$\xi\in D_f$};
\node[blue!50!black,anchor=west] at (1.5,1.05) {$r=1+f$};
\end{scope}
\end{tikzpicture}
\caption{The fixed ball used by the fixed ball comparison lemma and the moving target domain. The hole is pulled back together with the metric. Initially the metric contribution is $\int_{\Omega}T(x):q(\Psi(x))\,dx$. Changing variables also changes the tensor's evaluation point and the volume Jacobian; only an energy estimate permits replacing this by $\int_OT(\xi):q(\xi)\,d\xi$ up to $O(\mathscr E)$. The tensor $T$ is generally not constant. The collar map $\Psi$ is distinct from the global replacement $F_0$.}
\label{fig:pullback-domains}
\end{figure}

\begin{lemma}\label{lem:unweighted}
With the domain and energy of Proposition~\ref{prop:marked},
\begin{equation}\label{eq:unweighted}
\begin{aligned}
 \left|\int_O w_0^{\frac{n+2}{n-2}}\dd V_h
              -\frac{|\Sph^{n-1}|}{nz}\right|&\le K\mathscr E,\\
 \|w_0-v_C\|_{L^2(O)}&\le KC\sqrt{\mathscr E}.
\end{aligned}
\end{equation}
\end{lemma}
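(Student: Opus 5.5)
The plan is to reduce to the fixed ball calculation of Lemma~\ref{lem:first-variation} by pulling $O=D_f\setminus A$ back to $B$, and then to remove the resulting linear term using scalar flatness and the prescribed boundary area.

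\textbf{Pullback.} Let $\Psi=\operatorname{Id}+X_f$ be a collar map that is the identity near the origin and maps $B$ onto $D_f$. For instance, take $\Psi(r\theta)=r\theta\,(1+\rho(r)f(\theta))$ with a fixed cutoff $\rho$ supported near $r=1$ and $\rho(1)=1$. Then $\|D\Psi-I\|_\infty\le K\tau$ and $\|X_f\|_{H^1(B)}^2\le K\|f\|_{H^1}^2\le K\mathscr E$. Set $A_0:=\Psi^{-1}(A)$, $\Omega:=B\setminus A_0$ and $h_1:=\Psi^*h=\delta+q_1$. By uniqueness in Lemma~\ref{lem:comparison-solution}, $w_1=w_0\circ\Psi$, so the integral in \eqref{eq:unweighted} equals $\int_\Omega w_1^{(n+2)/(n-2)}\,dV_{h_1}$. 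We have
\[
 q_1=(D\Psi)^T(\delta+q\circ\Psi)D\Psi-\delta
   =q\circ\Psi+2\operatorname{sym}DX_f+O(|DX_f|^2+|q||DX_f|),
\]
so $\|q_1\|_{L^2(\Omega)}^2\le K\mathscr E$ and $\Cap(A_0)\le K\mathscr E$. Lemma~\ref{lem:first-variation} then gives the integral as $|\Sph^{n-1}|/(nz)+\int_\Omega T:q_1+O(\mathscr E)$.

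\textbf{Splitting the linear term.} We write $\int_\Omega T:q_1$ as two pieces:
\[
 \int_\Omega T:q_1=\int_\Omega T:(q\circ\Psi)+2\int_B T:DX_f+O(\mathscr E).
\]
Extending the second piece from $\Omega$ to $B$ costs only $K|A_0|$. For the first piece, we change variables back to $O$. The Jacobian factor $1+O(|DX_f|)$ and the shift $T(\Psi^{-1}\xi)-T(\xi)=O(|X_f|)$ each multiply $q$, so by Cauchy--Schwarz they produce $O(\|q\|_{L^2}\|X_f\|_{H^1})=O(\mathscr E)$. Hence the first piece equals $\int_O T:q+O(\mathscr E)$. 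For the second piece, $\operatorname{div}T=0$ gives $2\int_BT:DX_f=2\int_{\partial B}T_{rr}(1)f\,d\sigma$.

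\textbf{Scalar flatness.} Replace $q$ by $q_{\mathrm{ref}}$ on $O$ and insert a capacity cutoff near $A$. Integrating by parts twice with $L_{\mathrm{sc}}^*\zeta=T$ gives
\[
 \int_O T:q=\int_O\zeta\,L_{\mathrm{sc}}q+\int_{\partial D_f}\bigl[\zeta(\ldots)+q(\nu,D\zeta)-\tr q\,\partial_\nu\zeta\bigr]+O(\mathscr E).
\]
Each cutoff derivative pairs with $q$ and $D\zeta$, so it costs $\|q\|_{L^2}\Cap^{1/2}$, and the part on $\partial D_f\setminus\Gamma$ costs $K\mathscr E$.

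Because $R_{\delta+q}=0$ on $O$, the scalar curvature formula gives $L_{\mathrm{sc}}q=O(|Dq|^2+|q||D^2q|)$. After one further integration by parts, the term with $D^2q$ becomes $O(\|q\|_{H^1}^2)$, with cutoff handled as before. On $\partial D_f$ we use $\zeta(1)=0$, so $\zeta=\zeta'(1)f+O(f^2)$ there, together with $\partial_\nu\zeta=\zeta'(1)+O(|f|)$. The boundary term then reduces to
\[
 -\zeta'(1)\int_\Gamma\tr_{\mathrm{tan}}q
 \;+\;\text{an $f$-term}\;+\;O(\mathscr E).
\]
The area expansion gives $\operatorname{Area}_h(\Gamma)=\int(1+(n-1)f+\tfrac12\tr_{\mathrm{tan}}q)+O(\mathscr E)$, and Proposition~\ref{prop:marked} makes this $|\Sph^{n-1}|+O(\mathscr E)$. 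So $\int\tr_{\mathrm{tan}}q=-2(n-1)\int f+O(\mathscr E)$.

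\textbf{Cancellation.} Using $T_{rr}(1)=-(n-1)\zeta'(1)$, the remaining terms are all multiples of $\zeta'(1)\int f$. The expected main obstacle is the bookkeeping showing that these multiples sum to zero, and that every hole and Jacobian error genuinely pairs two small quantities so that it is $O(\mathscr E)$ rather than $O(\sqrt{\mathscr E})$. I would verify the coefficient sum first in the model case where $q$ is the linearization of a pure dilation, for which the first variation must vanish because $h$ stays Euclidean.

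\textbf{Second estimate.} By \eqref{eq:standalone_state}, $\|w_1-v_C\|_{L^2(\Omega)}\le KC\sqrt{\mathscr E}$. Transferring this to $O$ costs $\|v_C\circ\Psi^{-1}-v_C\|_{L^2}\le\|Dv_C\|_\infty\|X_f\|_{L^2}\le KC\sqrt{\mathscr E}$.
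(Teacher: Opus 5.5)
\textbf{Gap in the scalar-flatness step.} Most of your proof matches the paper. The collar pullback, the splitting off of $2T_{rr}(1)\int_{\Sph^{n-1}}f\,d\sigma$ via $\operatorname{div}T=0$, the return of the metric term to $O$ at cost $O(\|q\|_2\|X_f\|_{H^1})$, the area cancellation, and your second estimate are all the paper's argument. The gap is in the scalar-flatness step.

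You integrate $\int_O\mathcal Q:D^2\zeta$ (with $\mathcal Q=q-(\tr q)I$) by parts twice, then once more to remove the $|q||D^2q|$ part of $L_{\mathrm{sc}}q$. This leaves on $\Gamma$ not only $\int_\Gamma\mathcal Q(\nu,D\zeta)$ but also $-\int_\Gamma\zeta\,\mathcal F_{\mathrm{scal}}\cdot\nu$. Here $\mathcal F_{\mathrm{scal}}=\operatorname{div}\mathcal Q+\mathcal A_{\mathrm{scal}}$ is the first-order flux in $R_h=\operatorname{div}\mathcal F_{\mathrm{scal}}+\mathcal D_{\mathrm{scal}}$, so this term is a boundary trace of $Dq$. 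Proposition~\ref{prop:marked} controls only $\|q\|_{H^1(O)}$ and $\|q\|_{L^2(\Gamma)}$; nothing bounds $Dq$ on $\Gamma$. So even though $\zeta=\zeta'(1)f+O(f^2)$ there, you cannot show this term is $O(\mathscr E)$. It is exactly your unidentified ``$f$-term''.

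The two genuine linear contributions already cancel exactly. They are $2T_{rr}(1)\int f$ and $-\zeta'(1)\int_\Gamma\tr_\Gamma q=2(n-1)\zeta'(1)\int f+O(\mathscr E)$, and they cancel by $T_{rr}(1)=-(n-1)\zeta'(1)$. Hence the lemma holds precisely when this extra term is $O(\mathscr E)$. The ``bookkeeping'' you defer is therefore the crux, and in your set-up it cannot be closed.

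\textbf{The missing idea.} Integrate by parts only once, and use $R_h=0$ in weak divergence form so that you never take a boundary trace of the flux. One integration gives
$\int_O\chi_A\mathcal Q:D^2\zeta=\int_\Gamma\chi_A\mathcal Q(\nu,D\zeta)-\int_O\mathcal F_{\mathrm{scal}}\cdot D(\chi_A\zeta)$
plus terms bounded by $\|\mathcal A_{\mathrm{scal}}\|_1$, $\|\mathcal F_{\mathrm{scal}}\|_2\|D\chi_A\|_2$ and $\|q\|_2\|D\chi_A\|_2$, all of which are $O(\mathscr E)$.

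Because $\zeta(1)=0$, the outer boundary values of $\zeta$ have the bounded extension $\zeta_\partial(r,\theta)=\chi_{\mathrm{rad}}(r)\zeta(1+f(\theta))$, with $\|\zeta_\partial\|_{H^1(D_f)}\le K\sqrt{\mathscr E}$. Test $\operatorname{div}\mathcal F_{\mathrm{scal}}=-\mathcal D_{\mathrm{scal}}$ with $\chi_A(\zeta-\zeta_\partial)\in H^1_0(O)$. This yields
$\int_O\mathcal F_{\mathrm{scal}}\cdot D(\chi_A\zeta)=\int_O\mathcal F_{\mathrm{scal}}\cdot D(\chi_A\zeta_\partial)+\int_O\mathcal D_{\mathrm{scal}}\chi_A(\zeta-\zeta_\partial)=O(\mathscr E)$,
using $\|\mathcal F_{\mathrm{scal}}\|_2\le K\sqrt{\mathscr E}$ and $\|\mathcal D_{\mathrm{scal}}\|_1\le K\mathscr E$. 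This is the weak, interior form of your boundary flux term, and it shows that term is genuinely quadratic.

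Consequently $\int_OT:q=-\zeta'(1)\int_\Gamma\tr_\Gamma q+O(\mathscr E)$. The coefficient of $\int_{\Sph^{n-1}}f\,d\sigma$ is then $2T_{rr}(1)+2(n-1)\zeta'(1)=0$ directly, and no model-case verification is needed.
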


\begin{proof}
\emph{Pullback to the fixed ball.}
Let $\chi_{\mathrm{rad}}$ be a fixed smooth radial cutoff, equal to one
near $r=1$ and supported away from the origin in $B_{3/2}$. Define
\[
 X_f(r\theta):=\chi_{\mathrm{rad}}(r)f(\theta)\theta,
 \qquad \Psi:=\operatorname{Id}+X_f.
\]
For sufficiently small fixed $\tau$, the map $\Psi$ is bi-Lipschitz,
$\Psi(B)=D_f$, and
\[
 \|DX_f\|_\infty\le K\tau,
 \qquad \|X_f\|_{H^1(\R^n)}^2\le K\mathscr E.
\]
This collar map is distinct from the boundary replacement $F_0$:
$F_0$ identifies retained points of the prescribed boundary,
whereas $\Psi$ is introduced solely to apply the fixed ball comparison.
Put $A_0:=\Psi^{-1}(A)$ and $\Omega:=B\setminus A_0$. On $\Omega$,
\begin{equation}\label{eq:pullback-explicit}
\begin{aligned}
 q_1:=\Psi^*(\delta+q)-\delta
 &=q\circ\Psi+DX_f+(DX_f)^T+\mathcal R_\Psi,\\
 \mathcal R_\Psi
 &=(DX_f)^TDX_f+(DX_f)^T(q\circ\Psi)
      +(q\circ\Psi)DX_f\\
 &\quad +(DX_f)^T(q\circ\Psi)DX_f.
\end{aligned}
\end{equation}
Here $q\circ\Psi$ denotes composition of the coordinate components;
the full tensor pullback includes the displayed differential factors.
Cauchy--Schwarz and the fixed amplitude bounds give
\[
 \|\mathcal R_\Psi\|_{L^1(\Omega)}\le K\mathscr E,
 \qquad \|q_1\|_{L^2(\Omega)}^2\le K\mathscr E,
 \qquad \Cap(A_0)\le K\mathscr E.
\]
The remainder is quadratic in the integrated $L^1$ sense used below;
its $L^2$ norm need not be smaller than order $\sqrt{\mathscr E}$.
The capacity estimate follows from the uniform bi-Lipschitz bounds.

The function $w_1:=w_0\circ\Psi$ satisfies the same weak Dirichlet
problem for $h_1:=\delta+q_1$ on $\Omega$, with
$w_1-1\in H^1_0(\Omega)$ and $1\le w_1\le1+KC$.
Thus Lemmas~\ref{lem:solution-taylor} and~\ref{lem:first-variation}
apply. Estimate~\eqref{eq:standalone_state}, together with
$\|Dv_C\|_\infty\|X_f\|_2\le KC\sqrt{\mathscr E}$, proves the
second assertion of \eqref{eq:unweighted}. The first variation formula gives
\[
 \int_O w_0^{\frac{n+2}{n-2}}\dd V_h
       -\frac{|\Sph^{n-1}|}{nz}
 =\int_\Omega T(x):q(\Psi(x))\dd x
   +\int_\Omega T:(DX_f+(DX_f)^T)\dd x+O(\mathscr E).
\]
Crucially, the tensor in the first integral is $T(x)$, evaluated in
the fixed reference ball, not $T(\Psi(x))$. The tensor $T$ is a
smooth radial field and is generally nonconstant.

To return this metric term to the target coordinates $\xi$, the exact
change of variables is
\[
 \int_\Omega T(x):q(\Psi(x))\dd x
 =\int_O
   \frac{T(\Psi^{-1}(\xi)):q(\xi)}
        {\det D\Psi(\Psi^{-1}(\xi))}\dd\xi.
\]
In particular, it is the \emph{inverse} Jacobian that occurs.
Uniform smooth bounds for $T$, the identity
$\det D\Psi=1+\operatorname{div}X_f+O(|DX_f|^2)$, and
Cauchy--Schwarz show that
\[
 \left|\int_\Omega T(x):q(\Psi(x))\dd x-\int_O T:q\dd\xi\right|
 \le K\bigl(\|X_f\|_2+\|DX_f\|_2\bigr)
             \|q\circ\Psi\|_2
 \le K\mathscr E.
\]
Since $\operatorname{div}T=0$, symmetry and integration by parts give
\[
 \int_B T:(DX_f+(DX_f)^T)\dd x
 =2\int_{\partial B}(T\theta)\cdot(f\theta)\dd\sigma
 =2T_{rr}(1)\int_{\Sph^{n-1}}f\dd\sigma.
\]
The omitted integral on $A_0$ is $O(\mathscr E)$ by the boundedness
of $T$ and $DX_f$ and the volume bound for the deleted set. Therefore
\begin{equation}\label{eq:standalone_moving}
 \int_O w_0^{\frac{n+2}{n-2}}\dd V_h
       -\frac{|\Sph^{n-1}|}{nz}
 =2T_{rr}(1)\int_{\Sph^{n-1}}f\dd\sigma
       +\int_O T:q\dd\xi+O(\mathscr E).
\end{equation}
Only first derivatives of $\Psi$ have entered the calculation; no
second derivative control of the Lipschitz boundary graph is needed.

\smallskip\noindent
\emph{Using scalar flatness on the retained domain.}
We work again in target coordinates and use $R_h=0$ on $O$. The
extension $\delta+q_{\mathrm{ref}}$ through $A$ is not assumed scalar flat.
Set
\[
 \mathcal Q:=q-(\tr q)I.
\]
The linearized scalar curvature is
$\operatorname{div}\operatorname{div}\mathcal Q$. More precisely, with
$h=\delta+q$, define
\[
 \Gamma^a_{ij}:=\tfrac12 h^{ab}
     (\partial_iq_{jb}+\partial_jq_{ib}-\partial_bq_{ij}),
 \qquad
 (\mathcal F_{\mathrm{scal}})^a
     :=h^{ij}\Gamma^a_{ij}-h^{ia}\Gamma^b_{ib}.
\]
The exact coordinate formula for scalar curvature yields
\begin{equation}\label{eq:standalone_scalar}
\begin{aligned}
 0&=\operatorname{div}\operatorname{div}\mathcal Q
       +\operatorname{div}\mathcal A_{\mathrm{scal}}
       +\mathcal D_{\mathrm{scal}},\\
 \mathcal A_{\mathrm{scal}}
       &:=\mathcal F_{\mathrm{scal}}-\operatorname{div}\mathcal Q,\\
 |\mathcal A_{\mathrm{scal}}|&\le K|q||Dq|,
 \qquad |\mathcal D_{\mathrm{scal}}|\le K|Dq|^2.
\end{aligned}
\end{equation}
For completeness the scalar remainder is
\[
 \mathcal D_{\mathrm{scal}}
 =-(\partial_bh^{ij})\Gamma^b_{ij}
   +(\partial_jh^{ij})\Gamma^b_{ib}
   +h^{ij}(\Gamma^b_{b\ell}\Gamma^\ell_{ij}
                -\Gamma^b_{j\ell}\Gamma^\ell_{ib}).
\]
This is an exact divergence identity, with no coordinate gauge
condition. It gives
\[
 \operatorname{div}\mathcal F_{\mathrm{scal}}
       =-\mathcal D_{\mathrm{scal}},\qquad
 \|\mathcal F_{\mathrm{scal}}\|_2\le K\sqrt{\mathscr E},\qquad
 \|\mathcal A_{\mathrm{scal}}\|_1
       +\|\mathcal D_{\mathrm{scal}}\|_1\le K\mathscr E.
\]

Choose a capacity cutoff $0\le a_A\le1$, equal to one near $A$, with
$\|a_A\|_{H^1(B_2)}\le K\sqrt{\mathscr E}$, and write
$\chi_A:=1-a_A$. The tensor $\chi_Aq$, extended by zero through $A$,
belongs to $H^1(D_f)$ and has norm at most $K\sqrt{\mathscr E}$:
its derivative is $\chi_ADq+qD\chi_A$, and $q$ has fixed bounded
amplitude. The extension $q_{\mathrm{ref}}$ and the uniform trace
inequality on the small Lipschitz graph domains also imply
\[
 \|q\|_{L^2(\Gamma)}\le K\sqrt{\mathscr E},
 \qquad \Gamma:=\partial D_f\setminus A.
\]
The trace constant is uniform because the boundary graphs have a
common Lipschitz bound and a fixed size finite coordinate cover.

Although $\zeta(1)=0$, the function $\zeta$ need not vanish on
$\partial D_f$. Subtract its actual boundary value by defining
\[
 \zeta_{\partial}(r,\theta)
       :=\chi_{\mathrm{rad}}(r)\zeta(1+f(\theta)).
\]
Then $\zeta-\zeta_{\partial}$ has zero outer trace, and
\[
 \|\zeta_{\partial}\|_{H^1(D_f)}\le K\sqrt{\mathscr E},
 \qquad \|\zeta_{\partial}\|_\infty\le K.
\]
Use $\chi_A(\zeta-\zeta_{\partial})\in H^1_0(O)$ as a weak test
in $\operatorname{div}\mathcal F_{\mathrm{scal}}
=-\mathcal D_{\mathrm{scal}}$. This test is bounded and vanishes
near $A$. Its use against the $L^1$ remainder is justified by uniformly
bounded smooth approximants converging strongly in $H^1$ and almost
everywhere; such approximants are obtained by a fixed bounded
truncation of the usual $H^1_0$ approximants. Consequently,
\[
 \int_O\mathcal F_{\mathrm{scal}}\cdot D(\chi_A\zeta)
 =\int_O\mathcal F_{\mathrm{scal}}\cdot D(\chi_A\zeta_{\partial})
   +\int_O\mathcal D_{\mathrm{scal}}\chi_A
                (\zeta-\zeta_{\partial})
 =O(\mathscr E).
\]
In particular, the derivative falling on $\chi_A$ is paired with
the small $L^2$ flux; no small supremum norm of $\zeta_{\partial}$
is required.

The identity $T:q=\mathcal Q:D^2\zeta$ and integration by parts now give
\begin{align*}
 \int_O\chi_A T:q
 ={}&\int_\Gamma\chi_A\mathcal Q(\nu,D\zeta)
       -\int_O\mathcal F_{\mathrm{scal}}\cdot D(\chi_A\zeta)
       +\int_O\zeta\mathcal F_{\mathrm{scal}}\cdot D\chi_A\\
 &+\int_O\chi_A\mathcal A_{\mathrm{scal}}\cdot D\zeta
       -\int_O\mathcal Q(D\chi_A,D\zeta).
\end{align*}
Every interior term is $O(\mathscr E)$: use the preceding tested
identity, the $L^1$ bound for $\mathcal A_{\mathrm{scal}}$, and the
products of the small $L^2$ norms with $\|D\chi_A\|_2$.
Removing $\chi_A$ in the volume and boundary integrals costs at most
\[
 K\|a_A\|_{L^2(D_f)}\|q\|_{L^2(O)}
 +K\|a_A\|_{L^2(\partial D_f)}\|q\|_{L^2(\Gamma)}
 \le K\mathscr E.
\]
Here and below boundary integrals without a metric label use Euclidean
area. We obtain
\begin{equation}\label{eq:standalone_boundary}
 \int_O T:q
 =\int_\Gamma\mathcal Q(\nu,D\zeta)+O(\mathscr E)
 =-\zeta'(1)\int_\Gamma\tr_\Gamma q+O(\mathscr E).
\end{equation}
The normal $\nu$ is Euclidean, and $\tr_\Gamma$ is Euclidean tangential
trace. To verify the second equality, decompose
\[
 \mathcal Q(\nu,D\zeta)
 =-(\partial_\nu\zeta)\tr_\Gamma q
       +q(\nu,D_\Gamma\zeta).
\]
On the radial graph,
$D_\Gamma\zeta=O(|Df|)$ and
$\partial_\nu\zeta=\zeta'(1)+O(|f|+|Df|^2)$.
The terms linear in $f$ or $Df$ pair with the small $L^2$ trace of $q$;
the remaining term is bounded by $K\|q\|_\infty\|Df\|_2^2$.
All are $O(\mathscr E)$. The integration by parts uses the full
Lipschitz domain $D_f$ and the extended tensor $\chi_A\mathcal Q$.
It requires no normal derivative trace of $q$ and no regularity of
the boundary of $A$.

\smallskip\noindent
\emph{Cancellation by the boundary area.}
Proposition~\ref{prop:marked} gives
$\operatorname{Area}_h(\Gamma)=|\Sph^{n-1}|+O(\mathscr E)$.
The tangential determinant and radial graph expansions give, respectively,
\[
 \operatorname{Area}_h(\Gamma)
 =|\Gamma|+\tfrac12\int_\Gamma\tr_\Gamma q+O(\mathscr E),
 \qquad
 |\partial D_f|
 =|\Sph^{n-1}|+(n-1)\int_{\Sph^{n-1}}f\dd\sigma
       +O(\mathscr E).
\]
Their remainders are bounded by $K\|q\|_{L^2(\Gamma)}^2$ and
$K\|f\|_{H^1(\Sph^{n-1})}^2$. The omitted target boundary also
has area $O(\mathscr E)$, so
\[
 \int_\Gamma\tr_\Gamma q
       +2(n-1)\int_{\Sph^{n-1}}f\dd\sigma=O(\mathscr E).
\]
Combining this identity with \eqref{eq:standalone_moving} and
\eqref{eq:standalone_boundary}, the coefficient of $\int f$ becomes
\[
 2T_{rr}(1)+2(n-1)\zeta'(1)=0
\]
by \eqref{eq:standalone_stresspotential}. This proves the first
assertion of \eqref{eq:unweighted}.
\end{proof}

\subsection{The harmonic weight}

The exact identity \eqref{eq:Jweighted} contains the positive factor
$v^{-1}$, which need not be at least one in the fill-in. Its deviation
from one has a nonnegative \emph{boundary mean}, however. We transfer
that sign to the required interior integral by a radial Poisson test
whose normal derivative is constant on the reference sphere.

\begin{lemma}\label{lem:weighted}
The harmonic correction satisfies
\begin{equation}\label{eq:harmonicweighted}
 \int_O(v^{-1}-1)w_0^{\frac{n+2}{n-2}}\dd V_h
       \ge-K\mathscr E.
\end{equation}
Consequently,
\[
 J\ge\frac{|\Sph^{n-1}|}{n}-K(m_\gamma+\varepsilon).
\]
\end{lemma}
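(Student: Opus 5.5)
The plan is to write $\psi_v:=v^{-1}-1$, which is $h$-harmonic on $O$ because the inverse factor is $h$-harmonic on $M$, and to split the weight as
\[
 w_0^{\frac{n+2}{n-2}}
 =v_C^{\frac{n+2}{n-2}}
 +\Bigl(w_0^{\frac{n+2}{n-2}}-v_C^{\frac{n+2}{n-2}}\Bigr).
\]
\emph{The non-radial remainder.} The second piece is bounded pointwise by $K|w_0-v_C|$, since both functions lie in $[1,1+KC]$. Lemma~\ref{lem:unweighted} gives $\|w_0-v_C\|_{L^2(O)}\le KC\sqrt{\mathscr E}$, and Proposition~\ref{prop:marked} gives $\|\psi_v\|_{L^2(O)}\le K\sqrt{\mathscr E}$. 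By Cauchy--Schwarz this piece therefore contributes $O(\mathscr E)$. It remains to bound $\int_O\psi_v\rho\dd V_h$ from below for the smooth positive radial density $\rho:=v_C^{(n+2)/(n-2)}$. Replacing $\dd V_h$ by $\dd\xi$ costs $\|\psi_v\|_2\|q\|_2=O(\mathscr E)$.

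\emph{Radial Poisson test.} Let $\phi$ be the radial solution on $B_2$ of
\[
 \Delta\phi=\rho,\qquad \phi(1)=0,\qquad \phi'(1)=\frac1{|\Sph^{n-1}|}\int_B\rho\dd x>0 .
\]
Its derivatives are uniformly bounded as $C\downarrow0$. As in the proof of Lemma~\ref{lem:unweighted}, set $\phi_\partial(r,\theta):=\chi_{\mathrm{rad}}(r)\phi(1+f(\theta))$, so that $\|\phi_\partial\|_{H^1}\le K\sqrt{\mathscr E}$. Take the capacity cutoff $\chi_A=1-a_A$. The function $\chi_A(\phi-\phi_\partial)$ lies in $H^1_0(O)$, so $h$-harmonicity of $\psi_v$ gives
\[
 \int_O\langle d\psi_v,d(\chi_A(\phi-\phi_\partial))\rangle_h\dd V_h=0 .
\]
This is the key device: it avoids any normal-derivative trace of $\psi_v$ on $\Gamma$, as well as any regularity of $\partial A$.

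\emph{Integration by parts and error terms.} Integrate by parts Euclideanly on the Lipschitz domain $D_f$ with the zero extension of $\chi_A\psi_v$:
\[
 \int_{D_f}\chi_A\psi_v\rho\dd\xi
 =-\int_{D_f}D(\chi_A\psi_v)\cdot D\phi\dd\xi
 +\int_{\partial D_f}\chi_A\psi_v\,\partial_\nu\phi\dd A .
\]
Each discrepancy is a product of two $\sqrt{\mathscr E}$-small factors, hence $O(\mathscr E)$:
\begin{itemize}
\item exchanging $\langle\cdot,\cdot\rangle_h\dd V_h$ for the Euclidean pairing costs $\|q\|_2\|D\psi_v\|_2$;
\item derivatives landing on $\chi_A$ pair with $\|\psi_v\|_2$ or with $\|D\psi_v\|_2$ against $\|a_A\|_{H^1}$;
\item the term involving $\phi_\partial$ costs $\|D\psi_v\|_2\|\phi_\partial\|_{H^1}$;
\item removing $\chi_A$ from the volume integral costs $\|a_A\|_2\|\psi_v\|_2$.
\end{itemize}
Consequently the volume integral reduces to the boundary term up to $O(\mathscr E)$.

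\emph{Boundary term.} On $\Gamma$ we have $\partial_\nu\phi=\phi'(1)+O(|f|+|Df|)$. Pairing the $O(|f|+|Df|)$ part with $\|\psi_v\|_{L^2(\Gamma)}$ costs $O(\mathscr E)$. Next, pull back through $F_0$ to $\Sigma$. The Euclidean area element of $\Gamma$ differs from $\dd\sigma$ by $O(|DF_0-I|)$, which is again paired with an $L^2$-small trace. The omitted source part $\Sigma\setminus F_0^{-1}(\Gamma)$ costs at most $\|\psi_v\|_{L^2(\Sigma)}|\Sigma\setminus F_0^{-1}(\Gamma)|^{1/2}_\sigma=O(\mathscr E)$, using \eqref{eq:inverse-v-energy} and Proposition~\ref{prop:marked}. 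The boundary cutoff $a_A$ is handled by its trace estimate. What remains is $\phi'(1)\int_\Sigma(v^{-1}-1)\dd\sigma$, which is nonnegative by \eqref{eq:inverse-v-mean} because $\phi'(1)>0$. This proves \eqref{eq:harmonicweighted}.

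\emph{Consequence for $J$.} Since $O$ is the image of an open subset of $\operatorname{int}M$, and $v^{-1}>0$ and $V\ge w_0\ge1$ there (Lemma~\ref{lem:comparison-solution}), identity \eqref{eq:Jweighted} gives
\[
 J\ge z\int_Ow_0^{\frac{n+2}{n-2}}\dd V_h
 +z\int_O\psi_v\,w_0^{\frac{n+2}{n-2}}\dd V_h .
\]
The first term is at least $|\Sph^{n-1}|/n-K\mathscr E$ by Lemma~\ref{lem:unweighted}, since $z\le1$. The second is at least $-K\mathscr E$ by \eqref{eq:harmonicweighted}. Recalling $\mathscr E=K(m_\gamma+\varepsilon)$ yields the claimed bound on $J$.

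The main obstacle is the boundary step. We have no normal-derivative trace of $\psi_v$ on $\Gamma$ and no regularity of the hole. The zero-trace test $\chi_A(\phi-\phi_\partial)$ is what reduces everything to the tangential trace of $\psi_v$, where the sign information \eqref{eq:inverse-v-mean} on the prescribed sphere $\Sigma$ is available.
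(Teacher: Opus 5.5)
Your proposal is correct and follows essentially the same route as the paper. It uses the radial Poisson test $\phi=-\psi_0$ with the $\chi_{\mathrm{rad}}$ boundary-value correction, tested against the capacity cutoff in the weak harmonic equation, then Euclidean integration by parts on $D_f$, pullback through $F_0$, and the sign supplied by \eqref{eq:inverse-v-mean}. The only difference is cosmetic: you replace $w_0^{(n+2)/(n-2)}$ by $v_C^{(n+2)/(n-2)}$ and $\dd V_h$ by $\dd\xi$ at the start, whereas the paper does so at the end.
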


\begin{proof}
In the selected coordinates, $v^{-1}$ is $h$-harmonic on $O$, and
Proposition~\ref{prop:marked} gives
\[
 \|v^{-1}-1\|_{H^1(O)}+\|v^{-1}-1\|_{L^2(\Gamma)}
       +\|q\|_{L^2(O)}\le K\sqrt{\mathscr E}.
\]
Moreover $|v^{-1}-1|\le K$, since $0<v^{-1}\le z^{-1}$.
We assume $\mathscr E\le1$, as ensured by choosing $C$ and the
approximation error sufficiently small.

Choose a capacity cutoff $0\le a_A\le1$, equal to one near $A$, with
\[
 \|a_A\|_{H^1(B_2)}\le K\sqrt{\mathscr E}.
\]
An arbitrarily small excess in this bound may be sent to zero after
the estimates. The function $(1-a_A)(v^{-1}-1)$ extends by zero
through $A$ to an $H^1(D_f)$ function. Integrals of this expression
over $D_f$ always mean that extension; no value of $v^{-1}$ is
assigned to points without a selected preimage.

Define, for $0\le r\le2$,
\[
 \psi_0(r):=\int_r^1 t^{1-n}
    \left(\int_0^t s^{n-1}v_C(s)^{\frac{n+2}{n-2}}\dd s\right)\dd t.
\]
This function is smooth at zero and satisfies
\[
 -\Delta\psi_0=v_C^{\frac{n+2}{n-2}}\quad\text{on }B_2,
 \qquad \psi_0(1)=0,
 \qquad -\psi_0'(1)
       =\frac1{|\Sph^{n-1}|}\int_Bv_C^{\frac{n+2}{n-2}}\dd x
       =\frac1{nz}.
\]
Its required derivatives are uniformly bounded for small $C$.
Correct its boundary value on $\partial D_f$ by
\[
 \psi_f(r,\theta):=\psi_0(r)
       -\chi_{\mathrm{rad}}(r)\psi_0(1+f(\theta)).
\]
Then $\psi_f$ has zero outer trace and
\[
 \|\psi_0-\psi_f\|_{H^1(D_f)}\le K\sqrt{\mathscr E},
 \qquad \|\psi_f\|_{W^{1,\infty}(D_f)}\le K.
\]
These bounds follow from $\psi_0(1)=0$ and the $H^1$ and fixed
Lipschitz bounds for $f$.

The function $(1-a_A)\psi_f\in H^1_0(O)$ is an admissible test in
the weak harmonic equation for $v^{-1}$. It vanishes near $A$ and
has zero trace on the Lipschitz outer boundary. Testing gives
\[
 \int_O(1-a_A)
       \langle\nabla^h(v^{-1}),\nabla^h\psi_f\rangle_h\dd V_h
 =\int_O\psi_f
       \langle\nabla^h(v^{-1}),\nabla^h a_A\rangle_h\dd V_h
 =O(\mathscr E).
\]
Replacing the left pairing and measure by their Euclidean versions
costs at most
\[
 K\|q\|_2\|D(v^{-1})\|_2\|D\psi_f\|_\infty
\le K\mathscr E.
\]
Replacing $\psi_f$ by $\psi_0$ costs at most
\[
 K\|D(v^{-1})\|_2\|D(\psi_0-\psi_f)\|_2
\le K\mathscr E.
\]
The product rule therefore gives
\[
 \left|\int_{D_f}D[(1-a_A)(v^{-1}-1)]\cdot D\psi_0\dd\xi\right|
 \le K\mathscr E
       +K\|v^{-1}-1\|_2\|Da_A\|_2
 \le K\mathscr E.
\]
The small $L^2$ norm of $v^{-1}-1$ is essential in the last estimate;
its uniform bound alone would leave a square root of the capacity.

Integrating by parts on the full Lipschitz domain $D_f$ yields
\begin{equation}\label{eq:radialtest}
\begin{aligned}
 \int_{D_f}(1-a_A)(v^{-1}-1)
             v_C^{\frac{n+2}{n-2}}\dd\xi
 ={}&-\int_{\partial D_f}(1-a_A)(v^{-1}-1)
                   \partial_\nu\psi_0\dd A_\delta\\
 &+O(\mathscr E).
\end{aligned}
\end{equation}
The normal $\nu$ and the area element $\dd A_\delta$ are Euclidean.
This integration uses the zero extension above, so no artificial
inner boundary integral arises.

Pull the outer boundary integral back to the prescribed unit sphere
using $F_0$. Its tangential Jacobian $J_{F_0}$ satisfies
\[
 \left\|-\bigl((\partial_\nu\psi_0)\circ F_0\bigr)J_{F_0}
                    -\frac1{nz}\right\|_{L^2(\Sph^{n-1})}
       \le K\sqrt{\mathscr E}.
\]
Indeed, the radius, unit normal, and tangential Jacobian of the
parametrization $F_0|_{\Sph^{n-1}}$ differ in $L^2$ from the
corresponding quantities of the unit sphere by at most
$K\|F_0-\operatorname{Id}\|_{W^{1,2}(\Sph^{n-1})}$.
The radial function $\psi_0$ has uniformly bounded derivatives.

Let $\Sigma_\Gamma:=F_0^{-1}(\Gamma)\subset\Sigma$. On this set,
$F_0$ agrees with the selected boundary coordinates, so
the pulled-back values of $v^{-1}$ are its original boundary values.
The $L^2$ coefficient error just estimated pairs with the
$O(\sqrt{\mathscr E})$ trace of $v^{-1}-1$, giving $O(\mathscr E)$.
For the constant coefficient, \eqref{eq:inverse-v-mean} gives
\[
 \int_\Sigma(v^{-1}-1)\dd\sigma
       =\frac{|\Sph^{n-1}|m_\gamma}{2z^2}\ge0.
\]
Omitting $\Sigma\setminus\Sigma_\Gamma$ costs $O(\mathscr E)$
because this set has area $O(\mathscr E)$ and $v^{-1}-1$ is bounded.
The cutoff contribution also satisfies
\[
 \left|\int_{\Sigma_\Gamma}
       (a_A\circ F_0)(v^{-1}-1)\dd\sigma\right|
 \le K\|a_A\|_{H^1(B_2)}
             \|v^{-1}-1\|_{L^2(\Sigma_\Gamma)}
 \le K\mathscr E,
\]
by the uniform trace inequality and bounded area distortion.
Thus the right-hand side of \eqref{eq:radialtest} is at least
$-K\mathscr E$.

Removing $1-a_A$ in the volume integral costs at most
$K\|a_A\|_2\|v^{-1}-1\|_2\le K\mathscr E$.
The positive functions $w_0$ and $v_C$ remain in a fixed bounded
interval. By \eqref{eq:unweighted}, replacing
$v_C^{(n+2)/(n-2)}$ by $w_0^{(n+2)/(n-2)}$ costs at most
\[
 K\|v^{-1}-1\|_2\|w_0-v_C\|_2
       \le KC\mathscr E\le K\mathscr E.
\]
Finally, replacing $\dd\xi$ by $\dd V_h$ costs at most
$K\|q\|_2\|v^{-1}-1\|_2\le K\mathscr E$.
This proves \eqref{eq:harmonicweighted}.

Both factors $v^{-1}$ and $V$ are positive on the fill-in.
The domain $O$ represents only a subset of that fill-in, so dropping
the remaining positive integral in \eqref{eq:Jweighted} and using
$V\ge w_0$ on $O$ gives
\[
 J=z\int_M v^{-1}V^{\frac{n+2}{n-2}}\dd V_h
 \ge z\int_O v^{-1}w_0^{\frac{n+2}{n-2}}\dd V_h
 \ge\frac{|\Sph^{n-1}|}{n}-K\mathscr E.
\]
The conclusion follows from $\mathscr E\le K(m_\gamma+\varepsilon)$.
\end{proof}

\subsection{Absorbing the mass}

\begin{proposition}\label{prop:mass-vanishing}
For all sufficiently small $C>0$, depending only on $n$, the residual
ADM mass satisfies $m_\gamma=0$.
\end{proposition}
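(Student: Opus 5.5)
The plan is to combine the mass budget of Lemma~\ref{lem:budget} with the integral comparison of Lemma~\ref{lem:weighted} and let $\varepsilon\downarrow0$ with the fill-in held fixed.

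First fix the dimensional tolerances $\tau,\eta,\beta$ as in Proposition~\ref{prop:marked}. Then choose $C$ small enough that the smallness hypotheses of Sections~\ref{sec:geometry} and~\ref{sec:weighted} hold for every admissible fill-in. The relevant hypotheses are the smallness of $m_\gamma+\varepsilon$ in Lemma~\ref{lem:marked-boundary}, the requirement $\mathscr E\le1$, and the smallness of $C$ in Lemma~\ref{lem:comparison-solution}. These thresholds are dimensional, because Lemma~\ref{lem:budget} gives $m_\gamma\le C/[2n(n-1)]$ uniformly over fill-ins and $\varepsilon$ is at our disposal.

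For each such fill-in and each small $\varepsilon>0$, Lemma~\ref{lem:weighted} yields
\[
 J\ge\frac{|\Sph^{n-1}|}{n}-K_n(m_\gamma+\varepsilon),
\]
with $K_n\ge1$ depending only on $n$. This is the comparison constant referred to in \eqref{eq:maincomparison}. Since $J$ and $m_\gamma$ are defined from the original metric and do not depend on $\varepsilon$, letting $\varepsilon\to0$ gives \eqref{eq:maincomparison} exactly. Substituting into \eqref{eq:exactbudget} in the form of \eqref{eq:mainbudget} gives
\[
 2(n-1)|\Sph^{n-1}|\,m_\gamma\le C\Bigl(\frac{|\Sph^{n-1}|}{n}-J\Bigr)\le CK_n m_\gamma .
\]
If in addition $C<2(n-1)|\Sph^{n-1}|/K_n$, then $m_\gamma\bigl(2(n-1)|\Sph^{n-1}|-CK_n\bigr)\le0$. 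Together with $m_\gamma\ge0$ from Lemma~\ref{lem:corner}, this forces $m_\gamma=0$.

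The main point needing care is that $K_n$ must not depend on $C$ or on the fill-in. Otherwise the absorption would be circular. I would check this through the dependencies. The constants in Proposition~\ref{prop:marked} are dimensional once $\tau$ is fixed. The model quantities $v_C,\psi,\zeta,T$ have $C^j$ bounds uniform as $C\downarrow0$, by Lemmas~\ref{lem:round-model} and~\ref{lem:first-variation}. The only fill-in-dependent quantities enter through approximation errors that are sent to zero at fixed fill-in; these are absorbed into $\varepsilon$.

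A second subtlety is the order of choices. The tolerances $\tau,\eta,\beta$ are fixed first, then $K_n$, and only afterwards the bound on $C$. Decreasing $C$ does not change $K_n$, because every smallness hypothesis used is monotone in $C$.
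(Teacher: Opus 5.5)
Your proposal is correct and follows the paper's proof. You fix the dimensional tolerances to obtain a fill-in-independent $K_n$ in Lemma~\ref{lem:weighted}, use the a priori bound $m_\gamma\le C/[2n(n-1)]$ to make all smallness requirements dimensional, and absorb $CK_nm_\gamma$ into the left side of the mass budget. The only cosmetic difference is the order of limits: the paper keeps $\varepsilon$ through the absorption, obtaining $\bigl[2(n-1)|\Sph^{n-1}|-CK_n\bigr]m_\gamma\le CK_n\varepsilon$ under the threshold $C\le(n-1)|\Sph^{n-1}|/K_n$, and only then lets $\varepsilon\downarrow0$, whereas you pass to the limit first, which is equivalent.
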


\begin{proof}
Fix the dimensional graph slope, then the metric amplitude $\tau$
required by ellipticity, semilinear coercivity, and the Taylor
estimates. Choose the exterior displacement tolerance $\eta$ in terms
of $\tau$, next $\beta$ small relative to $\eta^2$, and finally the
fixed potential thresholds of the coordinate construction. These
choices determine a finite constant $K_n\ge1$ with
\[
 J\ge\frac{|\Sph^{n-1}|}{n}
          -K_n(m_\gamma+\varepsilon).
\]
Choose $0<C_{\mathrm{pre}}(n)<n(n-1)$ so that the model and comparison
solutions exist and the a priori bound
$m_\gamma\le C/[2n(n-1)]$ enforces the remaining absolute
smallness requirements for $C\le C_{\mathrm{pre}}$ and sufficiently
small $\varepsilon$. This includes the volume bound used to choose
the chain of exterior points in Lemma~\ref{lem:marked-boundary}.

The mass budget implies
\[
 \bigl[2(n-1)|\Sph^{n-1}|-CK_n\bigr]m_\gamma
       \le CK_n\varepsilon.
\]
It therefore suffices to take
\begin{equation}\label{eq:qualitative-threshold}
 C_*(n):=\min\left\{C_{\mathrm{pre}}(n),
           \frac{(n-1)|\Sph^{n-1}|}{K_n}\right\}.
\end{equation}
For each original fill-in and each fixed $0<C\le C_*(n)$, the
numbers $m_\gamma$ and $J$ are fixed, whereas $\varepsilon$ may be
arbitrarily small. The bracket is at least $(n-1)|\Sph^{n-1}|>0$;
letting $\varepsilon\downarrow0$ proves $m_\gamma=0$. No convergence
of the coordinate maps, domains, or deleted sets is required. This argument establishes a positive dimensional threshold; the
numerical choice stated after Theorem~\ref{thm:main} requires tracking
the constants, which is omitted here.
\end{proof}

\section{Equality and the topology of the fill-in}\label{sec:equality}

The argument now recovers the topology as part of the equality case.
First the scalar flat fill-in becomes a Euclidean ball; then a positive
static potential identifies its original conformal metric.

By Proposition~\ref{prop:mass-vanishing} and \eqref{eq:energybound},
\[
 \int_N|dv|_\gamma^2\dd V_\gamma=0.
\]
Connectedness, continuity across $\Sigma$, and normalization at infinity
imply $v\equiv1$. Equation~\eqref{eq:uv} makes $\gamma$ Euclidean
on the attached exterior. Its two mean curvatures agree at $\Sigma$;
after the rescaling in \eqref{eq:hgamma}, the exterior is Euclidean
with unit spherical boundary. Hence
\[
 R_h=0,\qquad h_\Sigma=\sigma,\qquad H_h=n-1.
\]
Lemma~\ref{lem:EuclideanBYrigidity} now identifies $(M,h)$ with
$(\overline B,\delta)$. 

Under this identification,
$g=V^{4/(n-2)}\delta$. Let
$g_C:=v_C^{4/(n-2)}\delta$ be the round cap metric of
Lemma~\ref{lem:round-model}, and define
\[
 \varrho_{\mathrm{cap}}:=\frac{V}{v_C},
 \qquad g=\varrho_{\mathrm{cap}}^{\frac4{n-2}}g_C.
\]
Both conformal factors equal one on the boundary, so
$\varrho_{\mathrm{cap}}|_{\partial B}=1$. The original scalar curvature and
mean curvature hypotheses give
\[
 -\frac{4(n-1)}{n-2}\Delta_{g_C}\varrho_{\mathrm{cap}}
       +C\varrho_{\mathrm{cap}}
       \ge C\varrho_{\mathrm{cap}}^{\frac{n+2}{n-2}},
 \qquad
 \partial_{\nu_C}\varrho_{\mathrm{cap}}\ge0
       \quad\text{on }\partial B.
\]
Here $\nu_C$ is the outward $g_C$-unit normal. The boundary inequality
uses
\[
 H_g=H_C+\frac{2(n-1)}{n-2}
                  \partial_{\nu_C}\varrho_{\mathrm{cap}}.
\]
No pointwise comparison of $\varrho_{\mathrm{cap}}$ with one is needed.

The round cap has the positive static potential
\[
 f_C(r):=\frac{1-\alpha r^2}{1+\alpha r^2},
 \qquad \alpha=\frac{1-z}{z}\in(0,1),
 \qquad
 \Delta_{g_C}f_C=-\frac{C}{n-1}f_C.
\]
Test the scalar inequality with $f_C$ and apply Green's identity to
$\varrho_{\mathrm{cap}}-1$, which has zero boundary trace. This yields
\begin{align*}
 0
 &\le\int_B f_C\left[
      -\frac{4(n-1)}{n-2}\Delta_{g_C}\varrho_{\mathrm{cap}}
      +C\varrho_{\mathrm{cap}}
      -C\varrho_{\mathrm{cap}}^{\frac{n+2}{n-2}}
                         \right]\dd V_{g_C}\\
 &=-\frac{4(n-1)}{n-2}\int_{\partial B}
      f_C\partial_{\nu_C}\varrho_{\mathrm{cap}}\dd\sigma\\
 &\quad-C\int_B f_C\left[
      \varrho_{\mathrm{cap}}^{\frac{n+2}{n-2}}-1
       -\frac{n+2}{n-2}(\varrho_{\mathrm{cap}}-1)
                         \right]\dd V_{g_C}
 \le0.
\end{align*}
The coefficient produced by the static equation is $4C/(n-2)$,
which combines with the linear scalar term to give the displayed
tangent line remainder. The boundary term is nonpositive, and strict
convexity of $s\mapsto s^{(n+2)/(n-2)}$ for $s>0$ makes the second
integrand nonnegative, vanishing only at $s=1$. Since $C>0$ and
$f_C>0$ on $\overline B$, it follows that
$\varrho_{\mathrm{cap}}\equiv1$. Thus $V=v_C$ and $g=g_C$.

This proves Theorem~\ref{thm:main}. Rescaling by
$C=n(n-1)\sin^2r_0$ gives Corollary~\ref{cor:gromov}.

\end{document}